\theoremstyle{plain}\newtheorem{Theorem}{Theorem}[section]
\theoremstyle{plain}
\theoremstyle{plain}\newtheorem{Corollary}[Theorem]{Corollary}
\theoremstyle{plain}\newtheorem{Lemma}[Theorem]{Lemma}
\theoremstyle{plain}\newtheorem{Proposition}[Theorem]{Proposition}
\theoremstyle{definition}\newtheorem{Definition}[Theorem]{Definition}
\theoremstyle{definition}\newtheorem{Example}[Theorem]{Example}
\theoremstyle{definition}
\theoremstyle{definition}\newtheorem{Remark}[Theorem]{Remark}
\newcommand{\ilim}{\mathop{\varprojlim}\limits}
\DeclareMathOperator{\ess}{ess}
\DeclareMathOperator{\clo}{clo}
\DeclareMathOperator{\Gl}{Gl}
\newcommand*\oline[1]{%
  \,\vbox{%
    \hrule height 0.5pt%                  % Line above with certain width
    \kern0.25ex%                          % Distance between line and content
    \hbox{%
      \kern-0.2em%       % Distance between content and left side of box, negative values for lines shorter than content
      \ifmmode#1\else\ensuremath{#1}\fi%  % The content, typeset in dependence of mode
      \kern-0.05em%      % Distance between content and left side of box, negative values for lines shorter than content
    }%
  }%
}
\def\N#1#2{N_{#1}(#2)}
\def\osc{{\rm osc}}
\def\GL{{\rm GL}}
\def\Aut{{\rm Aut}}
\def\Ker{{\rm Ker}}
\def\Iso{{\rm Iso}}
\def\Hom{{\rm Hom}}
\def\Mor{{\rm Mor}}
\def\Alp{{\rm Alp}}
\def\id{{\rm id}}
\def\ps{{\rm pro-sat}}
\def\inc{{\rm inc}}
\def\conj{{\rm conj}}
\def\F{{\bf F}}
\def\Ss{{\mathcal S}}
\def\Ff{{\mathcal F}}
\def\Ee{{\mathcal E}}
\def\Gg{{\mathcal G}}
\newcommand{\npar}{\smallskip\par\noindent\pagebreak[2]\refstepcounter{Theorem}{\bf \thesection.\arabic{Theorem}.\ \ }}
\begin{document}

\title{Fusion Systems for Profinite Groups}
\author{Radu Stancu}
\address{LAMFA-CNRS UMR 7352\\
Univ. de Picardie\\
33, Rue Saint-Leu\\
80039 Amiens CX 1\\
France}
\email{radu.stancu@u-picardie.fr}

\author{Peter Symonds}
\address{School of Mathematics\\
         University of Manchester\\
     Manchester M13 9PL\\
     United Kingdom}
\email{Peter.Symonds@manchester.ac.uk}
\date{\today}
\begin{abstract} We introduce the notion of a pro-fusion system on a pro-$p$ group, which generalizes the notion of a fusion system on a finite $p$-group. We also prove a version of Alperin's Fusion Theorem for pro-fusion systems.
\end{abstract}
\maketitle

%%%%%%%%%%%%%%%%%%%%%%%%%
\section{Introduction}%%%
%%%%%%%%%%%%%%%%%%%%%%%%%

Profinite groups have a good theory of Sylow pro-$p$ subgroups, so they form an obvious candidate for a generalization of fusion theory in finite groups. See for example the work of Gilotti, Ribes and Serena~\cite{gilottiribesserena} on fusion and transfer in the context of profinite groups.

In the present work we develop a theory of fusion systems for profinite groups. We define a pro-fusion system $\Ff$ on a pro-$p$ group $S$ as an inverse limit of fusion systems on finite $p$-groupe (see Definition~\ref{ConstrProFusSt}) and we study morphisms between them and also their quotients. When the fusion systems in the inverse limit are all saturated we call $\Ff$ a pro-saturated fusion system.

If $\Ff$ is a pro-saturated fusion system on a pro-$p$ group $S$, then analogously to the case of profinite groups, $\Ff$ is an inverse limit of its quotients by open, strongly closed subgroups of $S$ (Proposition~\ref{Proposition7}). We also define saturation of $\Ff$ in terms of  properties familiar from the finite case and show that these are automatically satisfied when $\Ff$ is pro-saturated and $S$ is countably based (Theorem~\ref{prosatsat}).

The main results of this work are concerned with a version of Alperin's Fusion Theorem for pro-saturated fusion systems. We generalize the notion of an invariant subsystem on a strongly closed subgroup $T$ of $S$, by extending the sets of objects to all subgroups of $S$; we call it a $T$-subsystem (Definition~\ref{subsystem}). A saturated $T$-subsystem of a pro-fusion system attempts to mimic a normal subgroup of a profinite group. We show that a morphism in a saturated $T$-subsystem $\Ee$ between two open subgroups of $S$ can be expressed as a composition of a chain of restrictions of automorphisms in $\Ee$ (Theorem~\ref{openAFT}). This relative form of Alperin's Theorem appears to be new even in the case of finite groups. The result for a morphism between closed subgroups is more complicated and we only prove it with additional hypotheses (Theorem~\ref{ProAFTgene}). Moreover, the resulting chain of morphisms need not be finite, although it converges in the profinite topology. In the last section we give an example of a profinite group and a morphism in its associated pro-fusion system for which the length of such an Alperin  chain must be infinite.

%%%%%%%%%%%%%%%%%%%%%%%%%%%%%%%%%%%%%%%%%%%%%
\section{Terminology and basic properties}%%%
%%%%%%%%%%%%%%%%%%%%%%%%%%%%%%%%%%%%%%%%%%%%%
\npar
When we deal with profinite groups we always work in the category of profinite groups, so all subgroups are closed and all homomorphisms are continuous. For more information on profinite groups see, for example, \cite{rz, serre, wilson}

There is a good theory of Sylow pro-$p$ subgroups. However, there are some differences from the case of finite $p$-groups. One is that a proper subgroup of a pro-$p$ group can be equal to its normalizer; the normalizer only has to be larger when the subgroup is open. Another difference is that a group can be isomorphic to one of its proper subgroups.

One result that does carry over, and which we shall use frequently, is the following: if $P$ is a pro-$p$ group, $Q$ an open subgroup and $N$ a normal subgroup such that $N_N(Q) \leq Q$, then $N \leq Q$.

\npar\label{FinFusSt} Let $p$ be a prime and $S$ a finite $p$-group. A fusion system on $S$ is a category $\Ff$ with objects the set of subgroups of $S$ and morphisms which are injective group homomorphisms subject to two conditions:
\begin{enumerate}[(a)]
\item $\Ff$ contains every group homomorphism induced by conjugation by an element of $S$, and
\item any morphism in $\Ff$ factors as an isomorphism in $\Ff$ followed by an inclusion.
\end{enumerate}
A fusion system $\Ff$ is called saturated if it satisfies a certain extra condition based on Sylow theory (see Definition~\ref{defSaturated} below). For more information on fusion systems see, for example, \cite{BLO2, cravenbook, puig}.

\begin{Definition}
\label{def:mor}
Let $\Ff$ be a fusion system on $S$ and $\Gg$ be a fusion system on $T$. A {\it morphism of fusion systems} between $\Ff$ and $\Gg$ is a group homomorphism $\alpha: S \rightarrow T$ such that there exists a functor $A: \Ff \rightarrow \Gg$ with the property that:
\begin{enumerate}[(a)]
\item $A(P)=\alpha(P)$, $\forall P\le S$;
\item for all $\varphi\in\Hom_\Ff(P,Q)$, $\alpha \varphi=A(\varphi)\alpha$.
\end{enumerate}
\end{Definition}

Notice that $A$ is uniquely determined by $\alpha$, provided that it exists. Thus a morphism of fusion systems is just a group homomorphism between the underlying $p$-groups that satisfies certain conditions, to wit that for each $\varphi \in \Hom _\Ff(P,G)$ we have $\varphi(P\cap N) \leq Q \cap N$ and the induced group homomorphism $\oline{\varphi}:P/P \cap N \rightarrow Q/Q \cap N$ is, in fact, a morphism of $\Gg$. We will refer to a morphism as $(\alpha,A)$, $\alpha$ or $A$, as convenient.

\npar\label{lemma4} Morphisms of fusion systems can be composed in the obvious way. It is easy to see that $\Ker(\alpha)$ is a strongly $\Ff$-closed subgroup of $\Ff$. Recall that $Q$ is a {\it strongly $\Ff$-closed subgroup of $S$} if, for all $\varphi\in\Hom_\Ff(R,S)$, $R\le Q$ implies $\varphi(R)\le Q$. Let $A(\Ff)$ be the collection of objects and morphisms of $\Gg$ that are in the image under $\alpha$ of objects and morphisms in $\Ff$. In general, $A(\Ff)$ need not be a fusion system, because it may not be possible to compose morphisms. We can remedy this by defining $\langle A(\Ff) \rangle$ to be the fusion system on $\alpha(S)$  generated by $A(\Ff)$. There is then a morphism of fusion systems $\Ff \rightarrow \langle A(\Ff) \rangle$.

When $Q$ is a strongly $\Ff$-closed subgroup of $S$, $\Ff/Q$ is defined to be the fusion system on $S/Q$ with morphisms all group homomorphisms $\varphi$ satisfying $\varphi\in\Hom_{\Ff/Q}(PQ/Q,S/Q)$ if and only if there exist $\tilde\varphi\in\Hom_\Ff(PQ,S)$ such that $\varphi(uQ)=\tilde\varphi(u)Q$ for all $u\in P$. However, there might not be a morphism from $\Ff$ to $\Ff/Q$. Define $\oline\Ff_Q$ to be the set of homomorphisms of subgroups of $S/Q$ induced by morphisms in $\Ff$, and let $\langle\, \oline\Ff_Q \rangle$ be the fusion system that it generates. There are then clearly morphisms $\Ff \to \langle\oline\Ff_Q \rangle$ and $\Ff/Q \to \langle\oline\Ff_Q \rangle$. If the latter is an isomorphism, there is a morphism $\Ff \to \Ff/Q$ and we call $\Ff/Q$ a quotient of $\Ff$. In the saturated case all of this is well behaved; the following results are due to Puig.

\begin{Theorem}[\cite{puig}]\label{thmPuig0}
Let $\Ff$ be a saturated fusion system on a finite $p$-group $S$ and let $N \leq S$ be strongly closed. For any $\varphi \in \Hom_\Ff(P,Q)$ there exists $\tilde\varphi \in \Hom_\Ff(PN,QN)$ such that $\tilde\varphi$ and $\varphi$ induce the same homomorphism $PN/N \to QN/N$.
\end{Theorem}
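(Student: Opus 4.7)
The plan is to proceed by induction on the index $|PN : P| = |N : P \cap N|$. Without loss of generality assume $\varphi$ is an isomorphism onto $Q$ (otherwise factor through the image and append the inclusion at the end). In the base case $N \leq P$ the group $PN$ equals $P$ and one takes $\tilde\varphi$ to be $\varphi$ followed by the inclusion $Q \hookrightarrow QN$. For the inductive step, since $PN$ is a finite $p$-group strictly containing $P$, normalizers grow and $N_{PN}(P) \gneq P$; writing an element of $N_{PN}(P) \setminus P$ as $pn$ with $p \in P$ and $n \in N$ shows $n \in N_N(P) \setminus P$, and after replacing $n$ by a suitable power we may assume $n^p \in P$. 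Set $\hat P := P \langle n \rangle$, so that $|\hat P : P| = p$ and $\hat P \leq PN$. Producing an $\Ff$-morphism $\hat\varphi \in \Hom_\Ff(\hat P, S)$ that agrees with $\varphi$ modulo $N$ on $P$ suffices, for the induction hypothesis applied to $\hat\varphi$ then delivers the required $\tilde\varphi$ on $\hat P N = PN$, with image landing in $QN$ by strong closure of $N$.

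To build $\hat\varphi$, I would combine the extension axiom of saturation with a Sylow argument inside a certain normal subgroup. By saturation pick a fully $\Ff$-normalized $\Ff$-conjugate $Q^*$ of $Q$ together with an $\Ff$-iso $\psi : Q \to Q^*$. Then $\alpha_0 := \psi \varphi c_n (\psi \varphi)^{-1}$ is a $p$-element of $\Aut_\Ff(Q^*)$. Using that $[n, P] \leq N$ (a consequence of normality of $N$ in $S$, itself following from strong closure) together with strong closure applied to $\psi$ and $\varphi$, one checks that $\alpha_0$ lies in the normal subgroup $K := \ker\bigl(\Aut_\Ff(Q^*) \to \Aut(Q^*/(Q^* \cap N))\bigr)$ of $\Aut_\Ff(Q^*)$. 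The Sylow axiom of saturation gives $\Aut_S(Q^*) \in {\rm Syl}_p(\Aut_\Ff(Q^*))$, and by normality of $K$ the intersection $K \cap \Aut_S(Q^*)$ is a Sylow $p$-subgroup of $K$. A Sylow conjugation inside $K$ then yields $\beta \in K$ with $\beta \alpha_0 \beta^{-1} \in \Aut_S(Q^*)$, and the extension axiom applied to $\varphi' := \beta \psi \varphi$ (whose normalizer condition set now contains $n$) produces an $\Ff$-morphism $\hat\varphi' \in \Hom_\Ff(\hat P, S)$ literally extending $\varphi'$.

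The main obstacle is to convert this extension of $\varphi'$ into a morphism agreeing with the original $\varphi$ modulo $N$. The $\beta$-modification is harmless because $\beta \in K$ induces the identity on $Q^* / (Q^* \cap N)$; the $\psi$-modification, however, can shift the target from the coset $QN$ to $Q^* N$, which in general need not coincide (even though they have the same order by strong closure). The remedy, which is the technical heart of Puig's original argument, is to choose $Q^*$ and $\psi$ compatibly so that $Q^* N = QN$ and $\psi$ descends to the identity on this common quotient. This is arranged by a second, relative Sylow argument carried out within the $N$-cosets of the $\Ff$-conjugacy class of $Q$, again exploiting strong closure of $N$. With such choices in place one has $\varphi' \equiv \varphi \pmod N$ on $P$, so $\hat\varphi := \hat\varphi'$ satisfies the required condition and the induction closes.
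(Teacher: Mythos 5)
Your first two paragraphs are sound: the reduction to adjoining one element $n\in N_N(P)\setminus P$ at a time, the verification that $\alpha_0={}^{\psi\varphi}(c_n)$ lies in $K=\Ker\bigl(\Aut_\Ff(Q^*)\to\Aut(Q^*/(Q^*\cap N))\bigr)$, and the Sylow conjugation inside the normal subgroup $K$ followed by receptivity of $Q^*$ are all correct. The gap is exactly where you place it, and the repair you propose is not available: it is \emph{false} in general that one can choose a fully normalized $Q^*\in Q^\Ff$ together with $\psi\in\Iso_\Ff(Q,Q^*)$ inducing the identity on $QN/N$. Take $G=S_4\times C_2$ at $p=2$, $S=D_8\times C_2$, $N=1\times C_2$ (central in $G$, hence strongly closed) and $Q=\langle((12)(34),1)\rangle$. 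Every element of $Q^\Ff$ is generated by $(\tau,1)$ with $\tau$ a double transposition, so the requirement $\psi(u)u^{-1}\in N$ forces $\psi(((12)(34),1))=((12)(34),1)$, i.e.\ $Q^*=Q$ and $\psi=\id$; but $Q$ is not fully normalized, since $|N_S(Q)|=8$ while its $\Ff$-conjugate $\langle((13)(24),1)\rangle$ has normalizer all of $S$, of order $16$ (equivalently, $Q$ is not receptive). The theorem itself is trivially true in this example, because $N$ is central and every morphism is a conjugation which extends; it is only your auxiliary claim that fails, so no ``relative Sylow argument in the $N$-cosets of $Q^\Ff$'' can rescue it.

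The natural patches are circular. If instead you try to undo $\psi$ afterwards by extending $\psi^{-1}:Q^*\to Q$ modulo $N$ to a morphism $Q^*N\to QN$, you are invoking the statement being proved for a morphism with the same value $|Q^*N:Q^*|=|QN:Q|=|PN:P|$ of your induction parameter (and the same index in $S$), so the induction does not close; and iterating your single-step extension fails for the same reason, since after one step the image of $\hat P$ need no longer be fully normalized. This is precisely the difficulty that the standard arguments remove at the outset by invoking Alperin's Fusion Theorem: one reduces to $\varphi\in\Aut_\Ff(P)$ with $P$ fully normalized and essential, where no transport $\psi$ is needed, and there a Frattini argument on $K\trianglelefteq\Aut_\Ff(P)$ writes $\varphi=\chi\psi$ with $\chi\in K$ and $\psi$ normalizing a Sylow $p$-subgroup of $K$, hence extendable to a strictly larger subgroup. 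That is the route this paper takes for the generalization to $T$-subsystems (Lemma~\ref{Lemma28Finite}, following Craven's proof), with the present theorem quoted from Puig. Your one-step extension mechanism is a good engine for extending such automorphisms, but some reduction of Alperin type (or Puig's own machinery) is needed before it can be applied, and your proof does not supply one.
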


\begin{Corollary}[\cite{puig}]\label{thmPuig}
Let $\Ff$ be a saturated fusion system on $S$.
\begin{enumerate}[(a)]
\item
Let $(\alpha,A):\Ff \rightarrow \Gg$ be a morphism of fusion systems. Then $A(\Ff)$ is a saturated fusion system isomorphic to $\Ff/\Ker(\alpha)$ and $(\alpha,A)$ factors as a quotient followed by an inclusion.
\item
If $Q$ is a strongly $\Ff$-closed subgroup of $S$ then $\Ff/Q$ is a quotient of $\Ff$ (i.e.\ the natural map from $S$ to $S/Q$ induces a morphism of fusion systems). Moreover, $\Ff/Q$ is saturated.
\end{enumerate}
\end{Corollary}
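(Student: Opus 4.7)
The plan is to deduce both parts from Theorem~\ref{thmPuig0}, tackling (b) first because (a) reduces to (b) via the kernel.

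For (b), the first step is to identify $\Ff/Q$ with $\langle\oline{\Ff}_Q\rangle$. The nontrivial inclusion is $\oline{\Ff}_Q\subseteq\Ff/Q$: given any $\varphi\in\Hom_\Ff(P,R)$, Theorem~\ref{thmPuig0} supplies $\tilde\varphi\in\Hom_\Ff(PQ,RQ)$ that induces the same homomorphism on $PQ/Q$. Hence every morphism of $\oline{\Ff}_Q$ is already realized by a lift whose source contains $Q$, so it is a morphism of $\Ff/Q$; the reverse inclusion is immediate. This equality shows that $\Ff\to\langle\oline{\Ff}_Q\rangle$ factors through $\Ff/Q$, i.e.\ the natural map $S\to S/Q$ induces a morphism of fusion systems. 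I would then read off the fusion system axioms for $\Ff/Q$ — closure under $S/Q$-conjugation, factorization as iso followed by inclusion, and closure under composition — directly from the corresponding axioms in $\Ff$ applied to the lifts $\tilde\varphi$.

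The main obstacle in (b) is saturation of $\Ff/Q$. I would unpack Definition~\ref{defSaturated} and, for each $\Ff/Q$-conjugacy class in $S/Q$, choose a fully $\Ff$-automized representative among subgroups of $S$ that contain $Q$, then check that this representative projects to a fully $\Ff/Q$-automized subgroup of $S/Q$. The extension axiom for $\Ff/Q$ would come from the extension axiom in $\Ff$ together with another application of Theorem~\ref{thmPuig0}, used to replace any $\Ff$-lift by one whose source already contains $Q$ before projecting. This conjugacy-class bookkeeping — controlling how $\Ff$-automizers behave after enlarging sources by $Q$ and then passing to the quotient — is the technical heart of Puig's argument and the step I expect to be the main obstacle.

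For (a), set $N=\Ker(\alpha)$. By the observation in~\ref{lemma4}, $N$ is strongly $\Ff$-closed, so (b) provides the saturated quotient $\Ff/N$ together with the canonical quotient morphism $\Ff\to\Ff/N$. The group homomorphism $\alpha$ factors as $S\twoheadrightarrow S/N\hookrightarrow T$, so $(\alpha,A)$ factors as $\Ff\to\Ff/N\to A(\Ff)\subseteq\Gg$. I would verify that the middle functor $\Ff/N\to A(\Ff)$ is an isomorphism: it is bijective on objects since every subgroup of $\alpha(S)=S/N$ is of the form $\alpha(P)$, and bijective on morphisms because Definition~\ref{def:mor}(b) forces $A(\varphi)$ to depend on $\varphi$ only through $\alpha\circ\varphi$, i.e.\ only modulo $N$, which is exactly the equivalence relation defining $\Ff/N$. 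Transporting saturation of $\Ff/N$ along this isomorphism shows $A(\Ff)$ is saturated, and the displayed chain is the required factorization as a quotient followed by an inclusion.
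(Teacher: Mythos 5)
The paper gives no proof of this corollary at all --- it is attributed to Puig, with a pointer to \cite{craven} for details --- so there is no argument of the authors' to compare yours against line by line. Judged on its own, your derivation is correct and is clearly the intended one: deducing the equality $\oline\Ff_Q=\Ff/Q=\langle\,\oline\Ff_Q\rangle$ from the lifting statement of Theorem~\ref{thmPuig0}, and reducing (a) to (b) via $N=\Ker(\alpha)$ using the observation in~\ref{lemma4} that $\Ker(\alpha)$ is strongly closed, is precisely the pattern the authors themselves reproduce later in the pro-setting (Proposition~\ref{propuig1} for the identification of the quotient, Lemma~\ref{Lemma14} and Corollary~\ref{QuotientSaturation} for saturation of the quotient). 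The only place you stop short, as you acknowledge, is the saturation of $\Ff/Q$; the missing details are exactly the finite analogues of Lemma~\ref{Lemma14}: a fully normalized representative of the $\Ff$-class of the full preimage of $\oline P$ automatically contains $Q$ because $Q$ is strongly closed, its image in $S/Q$ is fully normalized in $\Ff/Q$ by the two-sided comparison of normalizers, and receptivity in the quotient is obtained by lifting an isomorphism with Theorem~\ref{thmPuig0}, extending in $\Ff$, and projecting back. With those details filled in, your argument is a complete and faithful reconstruction of the standard proof.
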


There is a careful discussion of these topics in~\cite{craven}.

We will construct a pro-fusion system by an inverse limit process on a directed system of fusion systems on finite $p$-groups. Recall that a partially ordered set $I$ is {\it directed} if for all $i,j\in I$ there exists $k\in I$, with $k\ge i$ and $k\ge j$.

\begin{Definition}\label{ConstrProFusSt}
Suppose that we have an inverse system of fusion systems $\Ff_i$ on finite $p$-groups $S_i$, indexed by a directed set~$I$. That is, for every $i,j \in I$, $j \geq i$ we have morphisms $(f_{i,j},F_{i,j}):\Ff_j \rightarrow \Ff_i$ such that $f_{i,j}f_{j,k}=f_{i,k}$ whenever $k \geq j \geq i$. Set $S:=\ilim_{i\in I} S_i$ and let $f_i:S\to S_i$ be the induced projections and $N_i:=\Ker(f_i)$. Then $S$ is a pro-$p$ group, and the $\{N_i\mid i\in I\}$ form an open sub-basis for $S$ at $1$. Define $\Ff:=\ilim_{i\in I} \Ff_i$ to be the category on $S$ with objects all the closed subgroups of $S$ and with morphisms given by $\Hom_\Ff(P,Q):=\ilim_{i\in I} \Hom_{\Ff_i}(f_i(P),f_i(Q))$, for all $P,Q\le S$. We say that $\Ff$ is a {\it pro-fusion system} on the pro-$p$ group $S$.
\end{Definition}

Notice that $\Aut_{\Ff}(P)$ is naturally a profinite group. There is a canonical functor $F_i:\Ff\to\Ff_i$ sending $P\le S$ to $f_i(P)\le S_i$ and $\varphi\in\Hom_\Ff(P,Q)$ to its image in $\Hom_\Ff(f_i(P),f_i(Q))$. %The pair $(F_i,f_i)$ is a morphism of fusion systems between $\Ff$ and $\Ff_i$.

\begin{Example}
If $G$ is a profinite group with Sylow pro-$p$ subgroup $S$, then $\Ff_S(G)$, the category with the closed subgroups of $S$ as objects and as morphisms all homomorphisms induced by conjugation by the elements of $G$, is  a pro-fusion system. For if $G = \ilim_{i\in I}G/N_i$ then $\Ff_S(G)= \ilim_{i\in I} \Ff_{S/S \cap N_i}(G/N_i)$.
\end{Example}

\begin{Lemma}\label{IncIsoDec}
Every morphism in a pro-fusion system $\Ff$ factorizes uniquely as an isomorphism followed by an inclusion, and every endomorphism is an isomorphism.
\end{Lemma}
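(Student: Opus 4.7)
The plan is to unwind the definition of a pro-fusion system and reduce to the factorization property that holds in each finite $\Ff_i$. Let $\varphi \in \Hom_\Ff(P,Q)$, which by Definition~\ref{ConstrProFusSt} is a compatible family $(\varphi_i)_{i\in I}$ with $\varphi_i \in \Hom_{\Ff_i}(f_i(P),f_i(Q))$. Property~(b) of a fusion system (see~\ref{FinFusSt}) gives a unique factorization $\varphi_i = \iota_i \circ \psi_i$ in each $\Ff_i$, where $\psi_i\colon f_i(P) \to R_i := \varphi_i(f_i(P))$ is an isomorphism in $\Ff_i$ and $\iota_i$ is the inclusion of $R_i$ into $f_i(Q)$.

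The first step is to verify that the $R_i$ and the $\psi_i$ form compatible inverse systems. For $j\ge i$, applying the functor $F_{i,j}$ to the factorization of $\varphi_j$, and using that a morphism of fusion systems sends inclusions to inclusions (a direct consequence of condition~(b) in Definition~\ref{def:mor}), produces a factorization of $\varphi_i$ as $F_{i,j}(\psi_j)$, an isomorphism in $\Ff_i$ with image $f_{i,j}(R_j)$, followed by the inclusion of $f_{i,j}(R_j)$ into $f_i(Q)$. Uniqueness in $\Ff_i$ then forces $f_{i,j}(R_j)=R_i$ and $F_{i,j}(\psi_j)=\psi_i$.

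Next I would set $R := \ilim_i R_i$, a closed subgroup of $S$ contained in $Q = \ilim_i f_i(Q)$. Since the bonding maps of the system $(R_i)$ are surjective by the previous step, the standard Mittag-Leffler argument yields $f_i(R)=R_i$. The compatible family $(\psi_i)$ therefore defines a morphism $\psi\in\Hom_\Ff(P,R)$, and the family $(\psi_i^{-1})$ is likewise compatible and defines an inverse, so $\psi$ is an isomorphism in $\Ff$. Evidently $\varphi$ is the composition of $\psi$ with the inclusion $R\hookrightarrow Q$, and uniqueness of this factorization follows from the level-wise uniqueness together with the fact that any closed subgroup $R'$ of $S$ is recovered as $\ilim_i f_i(R')$.

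For the second claim, an endomorphism $\varphi\in\Hom_\Ff(P,P)$ restricts at each level to an injective endomorphism $\varphi_i$ of the finite $p$-group $f_i(P)$, hence to an automorphism whose inverse lies in $\Ff_i$; these inverses form a compatible family and give an inverse for $\varphi$ in $\Ff$. The only delicate step in the whole argument is the identification $f_i(R)=R_i$, since inverse limits do not in general have surjective projections; this is where I expect to invoke Mittag-Leffler for surjective inverse systems of finite groups, while everything else is a routine application of the universal property of the inverse limit.
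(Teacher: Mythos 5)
Your proposal is correct and follows essentially the same route as the paper: factor each $F_i(\varphi)$ as an isomorphism followed by an inclusion, check compatibility under the $F_{i,j}$, and pass to the inverse limit, with the endomorphism case handled by the level-wise surjectivity of injective endomorphisms of finite groups. You simply spell out the compatibility and the identification $f_i(R)=R_i$ that the paper leaves implicit.
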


\begin{proof}
Given $\varphi \in \Hom_{\Ff}(P,Q)$, each $F_i(\varphi)$ factorizes as $\inc_i \theta_i$, where $\theta_i$ is an isomorphism and $\inc _i$ is an inclusion. Thus $\varphi$ factorizes as $\ilim_{i\in I} \inc _i \, \ilim_{i\in I} \theta_i$.
The inverse to $\ilim_{i\in I} \theta_i$ is $\ilim_{i\in I} \theta_i^{-1}$, and $\ilim_{i\in I} \inc _i$ is the inclusion of $\varphi (P)$ in $Q$.

If $\varphi$ is an endomorphism then so is $F_i(\varphi)$; thus $\inc_i$ is the identity map, and so is $\ilim_{i\in I} \inc _i$.
\end{proof}

Sometimes we will write $\varphi ^{-1}$ for the inverse of the isomorphism part of this factorization of $\varphi$.

\begin{Lemma}\label{EI}
Let $\Ff$ be a pro-fusion system on $S$ and $P$, $Q$ subgroups of $S$. Suppose that there are morphisms $\varphi\in\Hom_\Ff(P,Q)$ and $\psi\in\Hom_\Ff(Q,P)$. Then $\varphi$ and $\psi$ are isomorphisms.
\end{Lemma}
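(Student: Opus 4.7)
The plan is to reduce the statement to the previous Lemma~\ref{IncIsoDec}, which already tells us that every endomorphism in $\Ff$ is an isomorphism. The compositions $\psi\varphi \in \End_\Ff(P)$ and $\varphi\psi \in \End_\Ff(Q)$ are well defined since $\Ff$ is a category, so by Lemma~\ref{IncIsoDec} both are automorphisms. The remaining issue is to turn automorphy of the compositions into automorphy of the individual factors, which is a general categorical matter.

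The concrete step is to exhibit inverses. Set $\alpha := (\varphi\psi)^{-1} \in \Aut_\Ff(Q)$ and $\beta := (\psi\varphi)^{-1} \in \Aut_\Ff(P)$, both available by the first paragraph. Then $\psi \alpha \in \Hom_\Ff(Q,P)$ satisfies $\varphi \circ (\psi \alpha) = (\varphi\psi)\alpha = \id_Q$, so $\psi\alpha$ is a right inverse for $\varphi$ in $\Ff$. Similarly $\beta\psi \in \Hom_\Ff(Q,P)$ satisfies $(\beta\psi)\circ\varphi = \beta(\psi\varphi) = \id_P$, so $\beta\psi$ is a left inverse for $\varphi$. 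In any category a morphism admitting both a left and a right inverse is an isomorphism, with the two inverses agreeing; hence $\varphi$ is an isomorphism in $\Ff$. By symmetry (interchanging the roles of $\varphi$ and $\psi$), $\psi$ is also an isomorphism.

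There is no real obstacle here: the only nontrivial content is already packaged in Lemma~\ref{IncIsoDec}, and everything else is the standard categorical observation that a morphism with a two-sided inverse is invertible. One could alternatively argue by projecting to each $\Ff_i$, using that in the finite fusion system $\Ff_i$ the maps $F_i(\psi)F_i(\varphi)$ and $F_i(\varphi)F_i(\psi)$ are automorphisms by the same endomorphism-equals-automorphism argument, forcing $F_i(\varphi)$ and $F_i(\psi)$ to be bijections (hence isomorphisms in $\Ff_i$, since the inclusion part of their iso-inclusion factorization is then the identity), and then assembling the inverses in the inverse limit; but the categorical argument above is shorter and avoids invoking the inverse system explicitly.
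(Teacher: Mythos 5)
Your proof is correct and follows essentially the same route as the paper: both reduce to Lemma~\ref{IncIsoDec} by observing that $\psi\varphi$ and $\varphi\psi$ are endomorphisms, hence automorphisms. The only (harmless) difference is in the last step, where the paper deduces that $\varphi$ and $\psi$ are surjective and invokes the iso--inclusion factorization once more, whereas you exhibit explicit left and right inverses and use the standard categorical fact that these must coincide; either finish is fine.
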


\begin{proof}
By Lemma~\ref{IncIsoDec}, $\varphi \psi$ and $\psi \varphi$ are both isomorphisms. Thus both $\varphi$ and $\psi$ are surjective, hence isomorphisms, by Lemma~\ref{IncIsoDec} again.
\end{proof}

\begin{Lemma}
\label{la:open}
If $P$ is open in $S$ and $Q$ is isomorphic to $P$ in $\Ff$, then $Q$ is also open and has the same index in $S$ as $P$.
\end{Lemma}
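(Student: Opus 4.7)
My plan is to use the inverse limit description: an isomorphism $\varphi\in\Hom_\Ff(P,Q)$ with inverse $\psi$ comes as $\varphi=\ilim_{i\in I}\varphi_i$ and $\psi=\ilim_{i\in I}\psi_i$ with $\varphi_i\in\Hom_{\Ff_i}(f_i(P),f_i(Q))$ and $\psi_i\in\Hom_{\Ff_i}(f_i(Q),f_i(P))$. The relations $\psi\varphi=\id_P$ and $\varphi\psi=\id_Q$ project to $\psi_i\varphi_i=\id_{f_i(P)}$ and $\varphi_i\psi_i=\id_{f_i(Q)}$, so each $\varphi_i$ is an isomorphism of finite groups; in particular $|f_i(P)|=|f_i(Q)|$ for every $i\in I$.

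Next I would use openness of $P$ to pick $i_0\in I$ with $N_{i_0}\le P$. For $i\ge i_0$ we have $N_i\le N_{i_0}\le P$, hence $f_i(P)=PN_i/N_i$ satisfies $[S_i:f_i(P)]=[S:PN_i]=[S:P]$. Combined with the previous step this gives $[S_i:f_i(Q)]=[S:P]$ for all $i\ge i_0$, and translating back up to $S$ this says $[S:QN_i]=[S:P]$ for all such $i$.

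The key observation is then that $Q=\bigcap_{i\ge i_0}QN_i$ (since $Q$ is closed), so $S/Q=\ilim_{i\ge i_0}S/QN_i$ as a profinite space. The bonding maps $S/QN_{i'}\to S/QN_i$ for $i'\ge i\ge i_0$ are surjective maps between finite sets of the same cardinality $[S:P]$, so they are bijections. Consequently the inverse limit is also a set of cardinality $[S:P]$, giving $[S:Q]=[S:P]<\infty$. A closed subgroup of finite index in a topological group is open (its complement is a finite union of closed cosets), so $Q$ is open of index $[S:P]$ as required.

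I do not anticipate a serious obstacle: the only delicate point is justifying that the bonding maps in the inverse system $\{S/QN_i\}_{i\ge i_0}$ are surjective, which is immediate because they are induced by the identity on $S$ composed with quotient maps, and a surjection between finite sets of equal cardinality is automatically bijective. Everything else is bookkeeping with the inverse limit definition in~\ref{ConstrProFusSt} and the standard facts about pro-$p$ groups recalled at the start of the section.
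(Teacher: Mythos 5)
Your argument is correct, but it takes a genuinely different route from the paper's. The paper's proof is a two-line affair: since $P$ is open it contains some $N_i$; because $N_i$ is strongly closed, $\varphi(N_i)\le N_i$, so $\varphi|_{N_i}$ is an endomorphism of $N_i$ and hence an automorphism by Lemma~\ref{IncIsoDec}, giving $N_i=\varphi(N_i)\le Q$; thus both $P$ and $Q$ contain the open normal subgroup $N_i$, and $P/N_i\cong Q/N_i$ inside the finite group $S/N_i$ settles openness and the index simultaneously. You never establish $N_i\le Q$; instead you deduce $|f_i(P)|=|f_i(Q)|$ at every level, conclude $[S:QN_i]=[S:P]$ for $i\ge i_0$, and recover $[S:Q]=[S:P]$ by identifying $S/Q$ with the inverse limit of the coset spaces $S/QN_i$, finishing with the standard fact that a closed subgroup of finite index is open. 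What your approach buys is that it does not use strong closure of the kernels $N_i$ at all, only the componentwise description of isomorphisms in $\Ff$; what it costs is the extra inverse-limit bookkeeping. Two small points to tidy: cite Lemma~\ref{IncIsoDec} (or Lemma~\ref{EI}) for the existence of the two-sided inverse $\psi$, since ``isomorphic in $\Ff$'' is a priori only the existence of a surjective morphism; and $[S_i:f_i(P)]$ should really be $[f_i(S):f_i(P)]$ if the projections $f_i$ are not assumed surjective --- harmless here, since your argument only needs $|f_i(P)|=|f_i(Q)|$ together with $[S:PN_i]=[S:P]$.
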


\begin{proof}
Since $P$ is open in $S$, it must contain some $N_i$, and $Q$ must also contain $N_i$, because $N_i$ is strongly closed. Thus $P/N_i$ is isomorphic to $Q/N_i$ in $\Ff_i$.
\end{proof}

\begin{Lemma}
If $\Ff$ is a pro-fusion system on $S$ then it contains every group homomorphism induced by an element of $S$.
\end{Lemma}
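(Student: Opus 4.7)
The plan is a direct verification using the inverse-limit description of $\Ff$. Fix $s\in S$ and $P\le S$, and set $Q:=sPs^{-1}$, which is again a closed subgroup of $S$. We must exhibit an element of $\Hom_\Ff(P,Q)=\ilim_{i\in I}\Hom_{\Ff_i}(f_i(P),f_i(Q))$ whose underlying map on $S$ is conjugation $c_s$.

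For each $i\in I$, write $s_i:=f_i(s)\in S_i$. Since $\Ff_i$ is a fusion system on the finite $p$-group $S_i$, axiom~(a) of paragraph~\ref{FinFusSt} gives that the conjugation homomorphism $c_{s_i}\colon f_i(P)\to s_if_i(P)s_i^{-1}=f_i(Q)$ is a morphism in $\Ff_i$. So I would take $\varphi_i:=c_{s_i}\in\Hom_{\Ff_i}(f_i(P),f_i(Q))$ and claim that $(\varphi_i)_{i\in I}$ is compatible in the inverse system, i.e.\ $F_{i,j}(\varphi_j)=\varphi_i$ whenever $j\ge i$.

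The compatibility is the only real content. Applying Definition~\ref{def:mor}(b) to the morphism $(f_{i,j},F_{i,j}):\Ff_j\to\Ff_i$ and to $\varphi_j=c_{s_j}$ yields
\[
F_{i,j}(c_{s_j})\circ f_{i,j}\big|_{f_j(P)}=f_{i,j}\circ c_{s_j}=c_{f_{i,j}(s_j)}\circ f_{i,j}\big|_{f_j(P)}=c_{s_i}\circ f_{i,j}\big|_{f_j(P)},
\]
using $f_{i,j}(s_j)=f_{i,j}(f_j(s))=f_i(s)=s_i$. As the restriction of $f_{i,j}$ to $f_j(P)$ is a surjection onto $f_i(P)$ (this is a standard property of the canonical projections from an inverse limit of finite groups), we may cancel it on the right and obtain $F_{i,j}(c_{s_j})=c_{s_i}=\varphi_i$.

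Hence $(\varphi_i)_{i\in I}$ defines an element $\varphi\in\Hom_\Ff(P,Q)$, and by construction $F_i(\varphi)=c_{s_i}$ for every $i$, so the underlying map $S\to S$ of $\varphi$ is $c_s$. The only potential obstacle is making precise the surjectivity of $f_{i,j}$ on the image subgroups; if one does not wish to assume this, one can replace $S_j$ throughout the set-up by $f_j(S)$ (and similarly for the subgroups), which is harmless because the inverse limit only depends on the images, so the cancellation step goes through unconditionally.
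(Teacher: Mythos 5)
Your argument is correct and is essentially the paper's own proof: take the conjugation maps $c_{f_i(s)}$ at each finite level, check they form a compatible system, and pass to the inverse limit. You merely spell out the compatibility $F_{i,j}(c_{s_j})=c_{s_i}$ that the paper asserts without comment (and your worry about surjectivity is unnecessary, since $f_{i,j}(f_j(P))=f_i(P)$ follows directly from $f_i=f_{i,j}f_j$).
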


\begin{proof}
Suppose that $\Ff = \ilim \Ff_i$ and that we have $P,Q \leq S$ and $s \in S$ such that ${}^sP \leq Q$. The conjugation homomorphism $c_s: P \rightarrow Q$ has image $c_{f_i(s)} : f_i(P) \rightarrow f_i(Q)$ between subgroups of $S_i$. Since $\Ff_i$ is a fusion system, we know that $c_{f_i(s)} \in \Hom _{\Ff_i}(f_i(P),f_i(Q))$. But $F_{i,j}(c_{f_j(s)})=c_{f_i(s)}$, so we obtain an element of $\Hom _\Ff(P,Q)$ that is clearly equal to $c_s$.
\end{proof}

Thus a pro-fusion satisfies the conditions for a fusion system in \ref{FinFusSt}. However, these are certainly not enough to ensure a good theory.

\begin{Example}
Any attempt to define a pro-fusion system on an infinite pro-$p$ group $S$ by setting $\Hom(P,Q)$ to be all the (continuous) injective group homomorphisms from $P$ to $Q$ will not succeed; in other words, the resulting category cannot be expressed as an inverse limit. For if $S$ is not all torsion then it contains a copy of the $p$-adic integers, which is isomorphic to a proper subgroup of itself, contradicting Lemma~\ref{EI}; if it is torsion then it contains an infinite product of cyclic groups of order $p$, by~\cite{zel}, which has the same property.
\end{Example}

We now define the saturation of a pro-fusion system in the same way as was done for a fusion system on a finite $p$-group by Roberts and Shpectorov \cite{RobertsShpectorov}. We start with some preliminary notions. For the rest of this section $\Ff$ is a pro-fusion system on a pro-$p$ group $S$.

\begin{Definition}
We say that $Q$ is {\it receptive in $\Ff$\/} if for all $Q\le S$ and for all $\varphi\in\Iso_\Ff(R,Q)$, there exists $\tilde \varphi:N_\varphi\to N_S(Q)$ in $\Ff$ such that $\tilde \varphi |_R = \varphi$, where $N_\varphi=\{x\in N_S(R) \mid \exists y\in N_S(Q), \varphi(xux^{-1})=y\varphi(u)y^{-1}, \forall u\in R\}$.
\end{Definition}

Write $\Aut _S(Q)$ for the image of the natural homomorphism $N_S(Q) \rightarrow \Aut _{\Ff}(Q)$. If $K \leq \Aut _{\Ff} (Q)$, write $\Aut_S^K(Q)$ for $\Aut_S(Q)\cap K$ and $N_S^K(Q)$ for the inverse image of $K$ in $N_S(Q)$.

\begin{Definition}
Let $K\le\Aut_\Ff(Q)$. We say that $Q$ is {\it fully $K$-automized in $\Ff$} if
$\Aut_S^K(Q)$ is a Sylow pro-$p$ subgroup of $K$. We say that $Q$ is {\it fully $K$-normalized in~$\Ff$\/} if $Q$ is receptive and fully $K$-automized in $\Ff$.
\end{Definition}

When $K=\Aut_\Ff(Q)$ we say that $Q$ is fully normalized in~$\Ff$ rather than fully $\Aut_\Ff(Q)$-normalized in~$\Ff$. When $K=1$ we say that $Q$ is fully centralized in~$\Ff$.

\begin{Definition}\label{defSaturated}
A pro-fusion system $\Ff$ is {\it saturated} if
 every $\Ff$-isomorphism class contains a subgroup that is fully normalized in~$\Ff$.
\end{Definition}

\begin{Remark}
We follow the treatment of Roberts and Shpectorov, because it does not require us to consider the order of $N_S(Q)/Q$ or of $C_S(Q)Q/Q$, which might be infinite. All the different characterizations in the literature are known to be equivalent in the finite case.
\end{Remark}

\begin{Example}
If $G$ is a profinite group with Sylow pro-$p$ group $S$ then $\Ff_S(G)$ is saturated. This is a consequence of Sylow theory, and the proof is the same as in the finite case.
\end{Example}

The development of the theory from the axioms follows the same lines as in the finite case. We just have to be careful to avoid mentioning the order of a group and not to use any hidden lemma particular to finite groups.

\begin{Lemma}\label{Lemma11}
Let $\Ff$ be a pro-fusion system on $S$, $Q \leq S$ and $K \leq \Aut_{\Ff}(Q)$. If $Q$ is fully $K$-automized in~$\Ff$ and $L\le K\le \Aut_\Ff(Q)$, then there exists a $\kappa \in K$ such that $Q$ is fully $^\kappa L$-automized in~$\Ff$.
\end{Lemma}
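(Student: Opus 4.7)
The plan is to reduce directly to the standard Sylow theorem for profinite groups, applied inside the profinite group $K$. Recall from the paragraph after Definition~\ref{ConstrProFusSt} that $\Aut_\Ff(Q)$ is naturally profinite, so $K$ is a profinite group and the full Sylow theory (existence of Sylow pro-$p$ subgroups, pairwise conjugacy, and the fact that every pro-$p$ subgroup is contained in a Sylow pro-$p$ subgroup) applies to $K$.

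First I would pick a Sylow pro-$p$ subgroup $P$ of $L$, which exists by Sylow theory in the profinite group $L$. Since $P$ is a pro-$p$ subgroup of $K$ and $\Aut_S^K(Q) = \Aut_S(Q)\cap K$ is a Sylow pro-$p$ subgroup of $K$ by hypothesis, the conjugacy of Sylow pro-$p$ subgroups in $K$ produces some $\kappa \in K$ with
\[
{}^\kappa P \ \le\ \Aut_S^K(Q)\ \le\ \Aut_S(Q).
\]

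Next I would show that this $\kappa$ works, i.e.\ $\Aut_S^{{}^\kappa L}(Q)$ is a Sylow pro-$p$ subgroup of ${}^\kappa L$. Since $L\le K$ we have ${}^\kappa L\le K$, and therefore
\[
\Aut_S^{{}^\kappa L}(Q) \ =\ \Aut_S(Q)\cap {}^\kappa L\ \le\ \Aut_S(Q)\cap K\ =\ \Aut_S^K(Q),
\]
so $\Aut_S^{{}^\kappa L}(Q)$ is pro-$p$. On the other hand, it contains ${}^\kappa P$, and ${}^\kappa P$ is a Sylow pro-$p$ subgroup of ${}^\kappa L$ because conjugation by $\kappa$ is a group isomorphism $L\to {}^\kappa L$ sending the Sylow subgroup $P$ to ${}^\kappa P$. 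A pro-$p$ subgroup of a profinite group that contains a Sylow pro-$p$ subgroup must coincide with it, so $\Aut_S^{{}^\kappa L}(Q)={}^\kappa P$ is Sylow pro-$p$ in ${}^\kappa L$, as required.

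There is no real obstacle: the only non-trivial ingredient is the profinite Sylow theorem (existence of Sylow pro-$p$ subgroups in $L$ and conjugacy of Sylow pro-$p$ subgroups in $K$), which holds in the profinite group $\Aut_\Ff(Q)$ and hence in all its closed subgroups. The proof is therefore essentially identical to the finite case, just with ``Sylow $p$-subgroup'' replaced by ``Sylow pro-$p$ subgroup'' throughout.
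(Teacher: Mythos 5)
Your proof is correct and follows essentially the same route as the paper: the paper likewise conjugates a Sylow pro-$p$ subgroup of $L$ into $\Aut_S^K(Q)$ using Sylow theory in the profinite group $K$ and then observes that $\Aut_S^{{}^\kappa L}(Q)$ must be that Sylow pro-$p$ subgroup of ${}^\kappa L$. Your version just spells out the final containment argument in more detail than the paper does.
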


\begin{proof}
By hypothesis, $\Aut^K_S(Q)$ is a Sylow pro-$p$ subgroup of $K$, so there is a $\kappa \in K$ such that $\Aut^K_S(Q)$ contains a Sylow pro-$p$ subgroup of $^\kappa L$. Hence $\Aut^{^\kappa L}_S(Q)$ is a Sylow pro-$p$-subgroup of $^\kappa L$.
\end{proof}

\begin{Lemma}
\label{la:equiv_fnorm}

If $Q$ is fully $K$-normalized in $\Ff$ and $\varphi \in \Iso _{\Ff}(R,Q)$, then there is morphism $\psi : N_S^{{}^{\varphi^{-1}}K}(R)R \rightarrow N_S^K(Q)Q$ in $\Ff$ and a $\chi \in K$ such that $\psi |_R = \chi \varphi$.
\end{Lemma}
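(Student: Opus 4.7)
The plan is to use Sylow theory inside the profinite group $K$ to replace $\varphi$ by an isomorphism $\chi\varphi$ that conjugates $\Aut_S^{{}^{\varphi^{-1}}K}(R)$ into $\Aut_S^K(Q)$, and then to invoke receptivity of $Q$ to extend this new isomorphism. Concretely, conjugation by $\varphi$ gives an isomorphism of profinite groups $c_\varphi : \Aut_\Ff(R) \to \Aut_\Ff(Q)$, $\alpha \mapsto \varphi\alpha\varphi^{-1}$, which restricts to an isomorphism ${}^{\varphi^{-1}}K \to K$. Since $\Aut_S^{{}^{\varphi^{-1}}K}(R)$ is pro-$p$ (being a closed subgroup of $\Aut_S(R)$, the continuous image of $N_S(R)$), its image $c_\varphi(\Aut_S^{{}^{\varphi^{-1}}K}(R))$ is a pro-$p$ subgroup of $K$. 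Because $Q$ is fully $K$-automized, $\Aut_S^K(Q)$ is a Sylow pro-$p$ subgroup of $K$, so profinite Sylow theory produces $\chi \in K$ with
$$\chi \, c_\varphi(\Aut_S^{{}^{\varphi^{-1}}K}(R)) \, \chi^{-1} \le \Aut_S^K(Q);$$
equivalently, $(\chi\varphi)\alpha(\chi\varphi)^{-1} \in \Aut_S^K(Q)$ for every $\alpha \in \Aut_S^{{}^{\varphi^{-1}}K}(R)$.

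Next I would verify that $N_S^{{}^{\varphi^{-1}}K}(R)\,R \le N_{\chi\varphi}$ so that receptivity can be applied. For $x \in N_S^{{}^{\varphi^{-1}}K}(R)$, the automorphism $c_x$ lies in $\Aut_S^{{}^{\varphi^{-1}}K}(R)$, so the preceding display gives $(\chi\varphi)\,c_x\,(\chi\varphi)^{-1} = c_y|_Q$ for some $y \in N_S^K(Q)$, witnessing $x \in N_{\chi\varphi}$. For $x \in R$ we simply take $y = \chi\varphi(x) \in Q \le N_S(Q)$. Since $N_{\chi\varphi}$ is a subgroup of $N_S(R)$, it contains the product $N_S^{{}^{\varphi^{-1}}K}(R)\,R$. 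Receptivity of $Q$, guaranteed by the fully-$K$-normalized hypothesis, now provides a morphism $\tilde\psi : N_{\chi\varphi} \to N_S(Q)$ in $\Ff$ with $\tilde\psi|_R = \chi\varphi$; let $\psi$ be its restriction to $N_S^{{}^{\varphi^{-1}}K}(R)\,R$.

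Finally, it remains to check that the image of $\psi$ lies in $N_S^K(Q)\,Q$. On $R$ we have $\psi(R) = \chi\varphi(R) = Q$, while for $x \in N_S^{{}^{\varphi^{-1}}K}(R)$ the identity $\psi(x)\,\chi\varphi(u)\,\psi(x)^{-1} = \tilde\psi(xux^{-1}) = \chi\varphi(xux^{-1})$ valid for all $u \in R$ shows that $c_{\psi(x)}|_Q$ coincides with $(\chi\varphi)\,c_x\,(\chi\varphi)^{-1}$, which is in $\Aut_S^K(Q)$; hence $\psi(x) \in N_S^K(Q)$, as desired. The only delicate ingredient in the whole argument is the first step, where profinite Sylow theory has to be applied inside $K$; once the correct $\chi$ is in hand, the rest is a routine unwinding of the definitions, essentially identical to the familiar finite-group proof.
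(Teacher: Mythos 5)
Your proposal is correct and follows essentially the same route as the paper: use profinite Sylow theory in $K$ (via the fully $K$-automized hypothesis) to find $\chi\in K$ conjugating ${}^{\chi\varphi}\Aut_S^{{}^{\varphi^{-1}}K}(R)$ into $\Aut_S^K(Q)$, observe that $N_S^{{}^{\varphi^{-1}}K}(R)R\le N_{\chi\varphi}$, and then extend $\chi\varphi$ by receptivity. You merely spell out more explicitly the two verifications (that the domain lies in $N_{\chi\varphi}$ and that the image lands in $N_S^K(Q)Q$) that the paper leaves implicit.
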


\begin{proof}
 Consider ${}^\varphi \Aut _S^{{}^{\varphi^{-1}}K}(R) \leq \Aut _{\Ff}^K(Q)$. Since $Q$ is fully $K$-automized, there is a $\chi \in \Aut _{\Ff}^K(Q)$ such that ${}^{\chi \varphi} \Aut _S^{{}^{\varphi^{-1}}K}(R) \leq \Aut _S^K(Q)$. But ${}^{\varphi^{-1}}K={}^{(\chi \varphi)^{-1}}K$, so $N_S^{{}^{\varphi^{-1}}K}(R) \leq N_{\chi \varphi}$. Because $Q$ is receptive, $\chi \varphi$ extends to $\psi':  N_{\chi \varphi} \rightarrow N_S(Q)Q$. The restriction of $\psi'$ to $N_S^{{}^{\varphi^{-1}}K}(R)$ is the morphism $\psi$ required.
\end{proof}

\begin{Lemma}[cf.~\cite{BLO2}]\label{Lemma10}
Let $\Ff$ be a pro-fusion system on $S$, $Q \leq S$ and $K \leq \Aut_{\Ff}(Q)$. Suppose that there is a subgroup in the $\Ff$-isomorphism class of $Q$ that is fully normalized in~$\Ff$. Then $Q$ is fully $K$-normalized in~$\Ff$ if and only if all $\varphi\in\Hom_\Ff(N_S^K(Q)Q,S)$ satisfy ${\varphi(N_S^K(Q))=N_S^{^\varphi K}(\varphi(Q))}$.

If $Q$ is an open subgroup of $S$, the previous properties are equivalent to $|N^K_S(Q)Q/Q|$ being maximal in the $\Ff$-isomorphism class of $Q$, i.e.~in the set $\{|N^{^\chi K}_S(\chi(Q))\chi(Q)/\chi(Q)|\mid\chi\in\Hom_\Ff(Q,S)\}$ (this set is bounded, by Lemma~\ref{la:open}).
\end{Lemma}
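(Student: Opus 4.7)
The proof splits into three parts corresponding to the two equivalences.

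For the forward direction of the first equivalence, assume $Q$ is fully $K$-normalized, let $\varphi\in\Hom_\Ff(N_S^K(Q)Q,S)$, and set $R:=\varphi(Q)$, $L:={}^\varphi K$. The inclusion $\varphi(N_S^K(Q))\subseteq N_S^L(R)$ is immediate, since for $x\in N_S^K(Q)$ the element $\varphi(x)$ normalizes $R$ with $c_{\varphi(x)}|_R={}^\varphi c_x\in L$. For the reverse inclusion, I apply Lemma~\ref{la:equiv_fnorm} to the isomorphism $\varphi^{-1}:R\to Q$, obtaining a morphism $\psi:N_S^L(R)R\to N_S^K(Q)Q$ with $\psi|_R=\chi\varphi^{-1}$ for some $\chi\in K$; the symmetric computation yields $\psi(N_S^L(R))\subseteq N_S^K(Q)$. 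Then $\varphi\psi$ lands inside $N_S^L(R)R$ and so is an endomorphism, hence an isomorphism by Lemma~\ref{IncIsoDec}. Its restriction to $N_S^L(R)$ is therefore an isomorphism $N_S^L(R)\to N_S^L(R)$, forcing $N_S^L(R)=\varphi\psi(N_S^L(R))\subseteq\varphi(N_S^K(Q))$, and equality follows.

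For the reverse direction, choose a fully normalized $Q^*$ in the iso class of $Q$ and an isomorphism $\alpha_0:Q\to Q^*$. Using Lemma~\ref{Lemma11} together with Sylow theory inside the profinite group $\Aut_\Ff(Q^*)$, one adjusts $\alpha_0$ by left multiplication by an element of $\Aut_\Ff(Q^*)$ to produce $\alpha$ such that $Q^*$ is fully ${}^\alpha K$-normalized and, simultaneously, ${}^\alpha\Aut_S(Q)\leq\Aut_S(Q^*)$. Lemma~\ref{la:equiv_fnorm} then furnishes $\psi:N_S^K(Q)Q\to N_S^{{}^\alpha K}(Q^*)Q^*$ with $\psi|_Q=\chi\alpha$ for some $\chi\in{}^\alpha K$, and applying the hypothesis to $\psi$ yields $\psi(N_S^K(Q))=N_S^{{}^\alpha K}(Q^*)$; hence $\psi$ is an isomorphism in $\Ff$. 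Conjugation by $\chi\alpha$ transports the Sylow pro-$p$ property of $\Aut_S^{{}^\alpha K}(Q^*)$ in ${}^\alpha K$ to $\Aut_S^K(Q)$ in $K$, giving full $K$-automization. For receptivity, given any iso $\phi:R\to Q$, the containment ${}^{\chi\alpha}\Aut_S(Q)\leq\Aut_S(Q^*)$ forces $N_\phi\subseteq N_{\chi\alpha\phi}$; receptivity of $Q^*$ extends $\chi\alpha\phi$ to $N_{\chi\alpha\phi}\to N_S(Q^*)$, and pulling this back through $\psi^{-1}$ on its domain produces the required extension of $\phi$ over $N_\phi$.

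The open case follows quickly. Lemma~\ref{la:open} yields uniform boundedness of the finite quotients $|N_S^{{}^\chi K}(\chi(Q))\chi(Q)/\chi(Q)|$. When $Q$ is fully $K$-normalized the injective morphism $\psi$ above bounds each such quantity by $|N_S^K(Q)Q/Q|$, so maximality holds. Conversely, assuming maximality, for any $\varphi\in\Hom_\Ff(N_S^K(Q)Q,S)$ injectivity gives $|\varphi(N_S^K(Q))\varphi(Q)/\varphi(Q)|=|N_S^K(Q)Q/Q|$, which combined with the easy inclusion and maximality forces equality of these subgroups modulo $\varphi(Q)$; a direct verification that $\varphi(Q\cap N_S^K(Q))=\varphi(Q)\cap N_S^{{}^\varphi K}(\varphi(Q))$ lifts this to the full equality $\varphi(N_S^K(Q))=N_S^{{}^\varphi K}(\varphi(Q))$, and the first equivalence closes the loop. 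The main technical obstacle is arranging the simultaneous adjustment of $\alpha$ in the reverse direction so that both the hypotheses of Lemma~\ref{la:equiv_fnorm} and the Sylow containment required for the receptivity pullback are satisfied.
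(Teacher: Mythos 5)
Most of your argument tracks the paper's: the forward direction via Lemma~\ref{la:equiv_fnorm} applied to $\varphi^{-1}$ followed by the two-way-morphism trick (the paper cites Lemma~\ref{EI} where you unpack it through Lemma~\ref{IncIsoDec}), the full-$K$-automization half of the converse via a fully normalized representative, Lemma~\ref{Lemma11} and transport of the Sylow property, and the open case (where you rightly make explicit the identity $\varphi(Q\cap N_S^K(Q))=\varphi(Q)\cap N_S^{{}^\varphi K}(\varphi(Q))$ that the paper glosses over) are all essentially the paper's proof.

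The gap is in the receptivity part of the converse. First, the ``simultaneous adjustment'' of $\alpha$, which you yourself flag as the main obstacle, is not delivered by Lemma~\ref{Lemma11} plus Sylow theory: you are asking for a single Sylow pro-$p$ subgroup of $\Aut_\Ff(Q^*)$, namely the fixed group $\Aut_S(Q^*)$, to contain ${}^\alpha\Aut_S(Q)$ \emph{and} to meet ${}^\alpha K$ in a Sylow pro-$p$ subgroup of ${}^\alpha K$. These two demands need not be simultaneously satisfiable by any choice of $\alpha$: already in $G=A_5$ with $p=2$, $A=\langle(12)(34)\rangle$ and $K$ the copy of $A_4$ fixing $4$, no Sylow $2$-subgroup of $G$ both contains $A$ and meets $K$ in a Sylow $2$-subgroup of $K$, although each condition is separately achievable. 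Second, even granting that adjustment, the pullback through $\psi^{-1}$ fails: your $\psi$ is defined only on $N_S^K(Q)Q$, with image $N_S^{{}^\alpha K}(Q^*)Q^*$, whereas for $x\in N_\phi$ the extension $\rho$ of $\chi\alpha\phi$ sends $x$ to an element of $N_S(Q^*)$ inducing the automorphism ${}^{\chi\alpha}({}^\phi c_x)\in{}^{\chi\alpha}\Aut_S(Q)$. This lies over $\Aut_S^{{}^\alpha K}(Q^*)$ only when ${}^\phi c_x$ lies in $K$ (up to inner automorphisms of $Q$), so $\rho(N_\phi)$ need not land in the image of $\psi$ and $\psi^{-1}\rho$ is not defined on all of $N_\phi$.

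The paper sidesteps both problems by decoupling receptivity from $K$ altogether. Since $Q^*$ is fully normalized it is fully $\Aut_\Ff(Q^*)$-normalized, so Lemma~\ref{la:equiv_fnorm} applied with the \emph{full} automorphism group produces a morphism $\psi'\in\Hom_\Ff(N_S(Q),N_S(Q^*))$ defined on the whole normalizer with $\psi'(Q)=Q^*$. Then ${}^{\psi'|_Q}\Aut_S(Q)\le\Aut_S(Q^*)$ holds automatically, giving $N_\phi\le N_{\psi'\phi}$ with no adjustment of $\alpha$; the extension of $\psi'\phi$ over $N_{\psi'\phi}$ carries $N_\phi$ into $\psi'(N_S(Q))$, and one composes with $(\psi')^{-1}$. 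You should keep your $\psi$ for the automization step but replace it by this second, $K$-free morphism in the receptivity step.
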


\begin{proof}
Suppose that $Q$ is fully $K$-normalized in~$\Ff$ and $\varphi\in\Hom_\Ff(N_S^K(Q)Q,S)$. Clearly $\varphi$ restricts to a map $ N_S^K(Q) \rightarrow N_S^{{}^{\varphi}K}(\varphi(Q))$. By Lemma~\ref{la:equiv_fnorm} applied to $\varphi^{-1}|_{\varphi(Q)}: \varphi(Q) \rightarrow Q$, there is a map $\psi : N_S^{{}^{\varphi}K}(\varphi(Q))\varphi(Q) \rightarrow N_S^K(Q)Q$. This takes $\varphi(Q)$ to $Q$, hence restricts to a map $N_S^{{}^{\varphi}K}(\varphi(Q)) \rightarrow N_S^K(Q)$.
Thus there are morphisms in both directions between $N_S^K(Q)$ and $N_S^{\tilde\varphi(K)}(\varphi(Q))$, and so, by Lemma~\ref{EI}, $\varphi(N_S^K(Q))=N_S^{^\varphi K}(\varphi(Q))$.

Conversely, suppose that for all $\varphi: N_S^K(Q)Q\to S$ we have $\varphi(N_S^K(Q))=N_S^{^\varphi K}(\varphi(Q))$. Let $\theta\in\Hom_\Ff(Q,S)$ be such that $P:=\theta(Q)$ is fully normalized in~$\Ff$. By Lemma~\ref{Lemma11}, there is a
 $\chi\in\Aut_\Ff(P)$ such that $P$ is fully ${}^\chi ({}^\theta K)$-normalized. Apply Lemma~\ref{la:equiv_fnorm} to $\eta:=\chi \theta |_Q$ to obtain a morphism $\psi : N_S^K(Q)Q \rightarrow N_S^{{}^\eta K}(P)P$ such that $\psi (Q)=P$ and ${}^\eta K = {}^\psi K$. The restriction  $\psi |_{N_S^K(Q)} : N_S^K(Q) \rightarrow N_S^{{}^\eta K}(P)$ must be onto, by hypothesis, hence the induced homomorphism $\Aut_S^K(Q) \rightarrow \Aut _S^{{}^\eta K}(P)$ must be onto. But we have arranged for $\Aut _S^{{}^\eta K}(P)$ to be a Sylow pro-$p$ subgroup of $\Aut _\Ff ^{{}^\eta K}(P)$, so $\Aut_S^K(Q)$ must be a Sylow pro-$p$ subgroup of $\Aut_\Ff^K(Q)$, showing that $Q$ is fully $K$-automized.

It is easy to check that $Q$ is receptive in $\Ff$. Indeed, since $P$ is fully normalized in~$\Ff$, Lemma~\ref{la:equiv_fnorm} shows that there is a $\psi\in\Hom_\Ff(N_S(Q),N_S(P))$ such that $\psi(Q)=P$. If $\chi\in\Iso_{\Ff}(R,Q)$, then $N_\chi\le N_{\psi\chi}$ and there is a $\rho\in\Hom_\Ff(N_{\psi\chi},N_S(P))$ with $\rho(R)=P$. Moreover, $\rho(N_\chi)\le\psi(N_S(Q))$, thus $\psi^{-1}\rho|_{N_\chi}$ extends $\chi$.

If $Q$ is an open subgroup of $S$, then Lemma~\ref{la:open} shows that for every subgroup $P$ and $\varphi\in\Hom_\Ff(Q,P)$ the image $P$ is also an open subgroup of $S$ and that $|QN^{^\varphi K}_S(P)/P|$ is finite and bounded by $|S:Q|$.  For any $\varphi\in\Hom_\Ff(N_S^K(Q)Q,S)$ we have $\varphi(N_S^K(Q)Q)\le N_S^{^\varphi K}(P)P$ and $\varphi(N_S^K(Q)Q)/Q\le N_S^{^\varphi K}(P)\varphi(Q)/P$. If $|N^K_S(Q)/Q|$ is maximal then we must have equality.

Conversely, suppose that $Q$ is fully $K$-normalized in~$\Ff$ and $\chi \in \Iso_{\Ff}(Q,P)$. Applying Lemma~\ref{la:equiv_fnorm} to $\chi^{-1}$ yields a $\psi :N_S^{{}^{\chi}K}(P)P \rightarrow N_S^K(Q)Q$, which restricts to an isomorphism $P \rightarrow Q$. This induces a morphism $N_S^{{}^{\chi}K}(P)P/P \rightarrow N_S^K(Q)Q/Q$, so $|N_S^K(Q)Q/Q|$ is maximal.
\end{proof}

\begin{Lemma}\label{Lemma16}
Let $\Ff$ be a pro-fusion system on $S$, $Q \leq S$ and $K \leq \Aut _\Ff(Q)$. If $Q$ is fully $K$-normalized in~$\Ff$, then for any $\varphi\in\Hom_\Ff(N_S^K(Q),S)$ the subgroup $\varphi(Q)$ is fully ${}^\varphi K$-normalized in~$\Ff$.
\end{Lemma}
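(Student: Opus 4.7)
The plan is to reduce the conclusion to the equivalent characterization of full $K$-normalization supplied by Lemma~\ref{Lemma10}. I set $P := \varphi(Q)$ and proceed in two short stages.

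First I would apply the forward direction of Lemma~\ref{Lemma10} to $Q$; that direction relies only on Lemma~\ref{la:equiv_fnorm} and Lemma~\ref{EI}, and so does not depend on the auxiliary isomorphism-class hypothesis. Taking $\sigma = \varphi$ yields $\varphi(N_S^K(Q)) = N_S^{{}^\varphi K}(P)$, which upgrades $\varphi$ to an $\Ff$-isomorphism $N_S^K(Q)Q \to N_S^{{}^\varphi K}(P)P$.

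Next I would verify the analogous condition for $P$ and ${}^\varphi K$. Given any $\psi \in \Hom_\Ff(N_S^{{}^\varphi K}(P)P, S)$, the composite $\psi\varphi$ lies in $\Hom_\Ff(N_S^K(Q)Q, S)$; applying the already-established condition to $\psi\varphi$ produces $(\psi\varphi)(N_S^K(Q)) = N_S^{{}^{\psi\varphi}K}((\psi\varphi)(Q))$, and in view of the first stage this rewrites as $\psi(N_S^{{}^\varphi K}(P)) = N_S^{{}^\psi({}^\varphi K)}(\psi(P))$, the required identity.

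Finally, the converse direction of Lemma~\ref{Lemma10} will deliver that $P$ is fully ${}^\varphi K$-normalized. This direction needs a fully normalized representative in the $\Ff$-isomorphism class of $P$, but $P$ and $Q$ are $\Ff$-isomorphic via $\varphi$, so they share this class and a representative for $Q$ also serves for $P$. The main delicate point in turning this sketch into a proof is precisely this bookkeeping around the isomorphism-class hypothesis, together with the implicit requirement $Q \le N_S^K(Q)$ that lets $\varphi$ be read naturally as a morphism on $N_S^K(Q)Q$.
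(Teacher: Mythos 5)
Your proof is correct and follows essentially the same route as the paper: both arguments verify the criterion of Lemma~\ref{Lemma10} for $\varphi(Q)$ by pre-composing an arbitrary morphism out of $N_S^{{}^\varphi K}(\varphi(Q))\varphi(Q)$ with $\varphi$ and invoking the criterion already established for $Q$, then apply the converse direction of Lemma~\ref{Lemma10}. Your explicit first stage (the equality $\varphi(N_S^K(Q))=N_S^{{}^\varphi K}(\varphi(Q))$) and your bookkeeping around the isomorphism-class hypothesis are points the paper leaves implicit, but the underlying argument is the same.
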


\begin{proof}
Given $\theta \in \Hom _\Ff( N_S^{{}^\varphi K}(\varphi (Q)) \varphi (Q),S)$, compose it with $\varphi$. By Lemma~\ref{Lemma10}, the map $\theta \varphi : N_S^K(Q) \rightarrow N_S^{{}^{\theta \varphi}K}( \theta \varphi (Q))$ is onto, so $\theta : N_S^{{}^{ \varphi}K}( \varphi (Q)) \rightarrow N_S^{{}^{\theta \varphi}K}( \theta (\varphi (Q)))$ is also onto. By Lemma~\ref{Lemma10} again, $\varphi(Q)$ is fully ${}^\varphi K$-normalized.
\end{proof}

When $\Ff$ is a pro-fusion system on $S$ and $N$ is an open strongly closed subgroup of $S$ we can form the quotient system $\Ff /N$ in the manner of \ref{lemma4}; it is a fusion system on $S/N$.
Just as in the case of a fusion system on a finite $p$-group, the image of a fully $\Ff$-normalized subgroup is a fully $\Ff/N$-normalized subgroup.

\begin{Lemma}\label{Lemma14}
Let $\Ff$ be a pro-fusion system on a pro-$p$ group $S$. Let $N$ be an open and strongly $\Ff$-closed subgroup of $S$ and let $Q$ be a subgroup of $S$ containing $N$ that is fully normalized in~$\Ff$. Then $Q/N$ is fully normalized in~$\Ff/N$.
\end{Lemma}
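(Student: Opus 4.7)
The plan is to verify directly the two defining conditions of ``fully normalized'' for $Q/N$ in $\Ff/N$: that it is fully automized and receptive. The crucial technical input is that any $\Ff$-morphism $\psi$ whose domain contains $N$ satisfies $\psi(N)=N$. Indeed, after factoring $\psi$ as an isomorphism followed by an inclusion (Lemma~\ref{IncIsoDec}), its isomorphism part at each finite level maps $f_i(N)$ injectively into $f_i(N)$ by strong closure, and finiteness forces equality; passing to the inverse limit gives $\psi(N)=N$. As a direct consequence, any $\Ff/N$-isomorphism $\varphi'\colon R'/N\to Q/N$ (with $N\le R'\le S$) lifts to a genuine $\Ff$-isomorphism $\tilde\varphi\colon R'\to Q$: any lift $\tilde\varphi\in\Hom_\Ff(R',S)$ with $\tilde\varphi(R')N=Q$ satisfies $\tilde\varphi(R')\supseteq\tilde\varphi(N)=N$, which combined with $\tilde\varphi(R')N=Q$ forces $\tilde\varphi(R')=Q$.

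For fully automized, the definition of $\Ff/N$ together with the lifting yields a surjection $\Aut_\Ff(Q)\twoheadrightarrow\Aut_{\Ff/N}(Q/N)$ that restricts to a surjection $\Aut_S(Q)\twoheadrightarrow\Aut_{S/N}(Q/N)$. Since $Q$ is fully normalized in $\Ff$, $\Aut_S(Q)$ is a Sylow pro-$p$ subgroup of $\Aut_\Ff(Q)$, so its image under this continuous surjection of profinite groups is a Sylow pro-$p$ --- equivalently a Sylow $p$-subgroup, since the target is finite --- of $\Aut_{\Ff/N}(Q/N)$.

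For receptivity, fix $\varphi'\in\Iso_{\Ff/N}(R'/N,Q/N)$ together with a lift $\tilde\varphi\in\Iso_\Ff(R',Q)$. Let $K_0:=\Ker(\Aut_\Ff(Q)\to\Aut_{\Ff/N}(Q/N))$ and set $K:=K_0\cdot\Aut_S(Q)$; this is a subgroup of $\Aut_\Ff(Q)$ because $K_0$ is normal. Since $\Aut_S(Q)\le K\le\Aut_\Ff(Q)$ and $\Aut_S(Q)$ is Sylow pro-$p$ in $\Aut_\Ff(Q)$, it remains Sylow pro-$p$ in $K$, and combined with the receptivity of $Q$ this shows that $Q$ is fully $K$-normalized. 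Applying Lemma~\ref{la:equiv_fnorm} to $\tilde\varphi$ with this $K$ produces a morphism $\psi\colon N_S^{{}^{\tilde\varphi^{-1}}K}(R')R'\to N_S^K(Q)Q=N_S(Q)$ in $\Ff$ with $\psi|_{R'}=\chi\tilde\varphi$ for some $\chi\in K$. The key identification is that the domain $N_S^{{}^{\tilde\varphi^{-1}}K}(R')$ is precisely the preimage in $S$ of $N_{\varphi'}$: using the kernel description of $K$, the condition $c_x\in{}^{\tilde\varphi^{-1}}K$ unpacks to $\varphi' c_{xN}(\varphi')^{-1}\in\Aut_{S/N}(Q/N)$, which is the defining condition for $xN\in N_{\varphi'}$.

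Reducing $\psi$ modulo $N$ then yields $\psi'\colon N_{\varphi'}\to N_{S/N}(Q/N)$ in $\Ff/N$ with $\psi'|_{R'/N}=\chi'\varphi'$, where $\chi'$ is the image of $\chi$ in $\Aut_{\Ff/N}(Q/N)$. Since $\chi\in K=K_0\Aut_S(Q)$, the element $\chi'$ lies in $\Aut_{S/N}(Q/N)$ and may be written as $c_{yN}|_{Q/N}$ for some $y\in N_S(Q)$; post-composing $\psi'$ with conjugation by $y^{-1}N$ (a morphism of $\Ff/N$) then produces the desired extension $\Phi'\colon N_{\varphi'}\to N_{S/N}(Q/N)$ of $\varphi'$. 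The main obstacle is the choice of $K$: the extension of $\tilde\varphi$ furnished directly by receptivity of $Q$ only covers $N_{\tilde\varphi}/N$, which is in general strictly smaller than $N_{\varphi'}$, so the enlargement $K_0\Aut_S(Q)$ is introduced precisely to make Lemma~\ref{la:equiv_fnorm}'s domain coincide with the preimage of $N_{\varphi'}$.
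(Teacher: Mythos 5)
Your proof is correct, but it takes a genuinely different route from the paper's. The paper does not verify the Roberts--Shpectorov axioms directly: it invokes the normalizer-surjectivity characterization of Lemma~\ref{Lemma10} in $\Ff/N$, lifts a morphism $\oline\varphi$ defined on $N_{\oline S}(\oline Q)=\overline{N_S(Q)}$ to $\Ff$, and uses Lemma~\ref{la:equiv_fnorm} to produce a morphism in the reverse direction, concluding by a cardinality comparison in the finite quotient. You instead check receptivity and full automization of $Q/N$ from scratch; your key device --- replacing $\Aut_{\Ff}(Q)$ by the preimage $K=K_0\cdot\Aut_S(Q)$ of $\Aut_{S/N}(Q/N)$ so that the domain $N_S^{{}^{\tilde\varphi^{-1}}K}(R')$ produced by Lemma~\ref{la:equiv_fnorm} is exactly the preimage of $N_{\varphi'}$ --- does not appear in the paper's proof (it is the standard mechanism for quotient saturation in the finite theory), and you correctly identify why the naive extension to $N_{\tilde\varphi}/N$ is insufficient. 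Your preliminary observations ($\psi(N)=N$ for any morphism whose domain contains $N$, hence exact lifting of isomorphisms onto $Q/N$; the latter could be obtained even more quickly from Lemma~\ref{EI}) are sound, as is the Sylow image argument via the index computation for the continuous surjection $\Aut_\Ff(Q)\to\Aut_{\Ff/N}(Q/N)$. What your approach buys is independence from the auxiliary hypothesis in Lemma~\ref{Lemma10} (the existence of a fully normalized subgroup in the relevant isomorphism class) and a fully explicit extension map witnessing receptivity; what the paper's approach buys is brevity, since the counting argument in the finite group $\oline S$ dispatches both axioms at once.
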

\begin{proof}
For simplicity, write $\oline\Ff:=\Ff/N$ and $\oline X:=XN/N$ for all subgroups $X$ of $S$.
First, note that, since $N\le Q$, we have $\overline{N_S(Q)}=N_{\overline S}(\oline Q)$. By Lemma~\ref{Lemma10}, we only need to show that for every $\oline \varphi\in\Hom_{\overline \Ff}(N_{\overline S}(\oline Q),\oline S)$ we have $\oline \varphi(N_{\overline S}(\oline Q))=N_{\overline S}(\oline \varphi(\oline Q))$. The inclusion of the left term in the right one is clear, so it is enough to show that $|\oline \varphi(N_{\overline S}(\oline Q))|\ge |N_{\overline S}(\oline \varphi(\oline Q))|$. Since $\oline \varphi$ is a morphism of $\oline \Ff$, there is a $\varphi\in\Hom_\Ff(N_S(Q),S)$ inducing $\oline \varphi$. By Lemma~\ref{la:equiv_fnorm}, there is a morphism $\theta \in \Hom_\Ff(N_S(\varphi(Q)),N_S(Q))$ that extends $(\varphi|_Q)^{-1}$ up to an automorphism of $Q$. This induces a morphism in $\Hom_{\overline \Ff}(N_{\overline S}(\oline \varphi(\oline Q)),N_{\overline S}(\oline Q))$, and so $|\oline \varphi(N_{\overline S}(\oline Q))|\ge |N_{\overline S}(\oline \varphi(\oline Q))|$.
\end{proof}

As a consequence we obtain:

\begin{Corollary}\label{QuotientSaturation}
Let $\Ff$ be a saturated pro-fusion system on a pro-$p$ group $S$, and let $N$ be an open and strongly $\Ff$-closed subgroup of $S$. Then $\Ff/N$ is saturated fusion system on $S/N$.
\end{Corollary}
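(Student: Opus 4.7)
The plan is to verify Definition~\ref{defSaturated} for $\Ff/N$: every $\Ff/N$-isomorphism class must contain a fully normalized representative. Since $N$ is open, $S/N$ is a finite $p$-group, so $\Ff/N$ is, by the construction in~\ref{lemma4}, a fusion system on a finite $p$-group, and the Roberts--Shpectorov saturation condition used in this paper coincides with the familiar finite notion there.

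Given a subgroup $\bar R \leq S/N$, lift it to the unique $R \leq S$ with $N \leq R$ and $R/N = \bar R$. By saturation of $\Ff$, the $\Ff$-isomorphism class of $R$ contains a fully normalized subgroup $Q$; fix an $\Ff$-isomorphism $\varphi : R \to Q$. The central claim is that $N \leq Q$. Once this is granted, Lemma~\ref{Lemma14} gives that $Q/N$ is fully normalized in $\Ff/N$, and $\varphi$ descends, by construction of $\Ff/N$, to an $\Ff/N$-isomorphism $\bar R = R/N \to Q/N$. Hence $Q/N$ is the desired fully normalized representative in the $\Ff/N$-isomorphism class of $\bar R$.

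To prove the containment $N \leq Q$, I would project through the inverse limit. Strong $\Ff$-closure of $N$ applied to $\varphi|_N$ yields $\varphi(N) \leq N$. Writing $\Ff = \ilim \Ff_i$ and setting $\varphi_i := F_i(\varphi) \in \Iso_{\Ff_i}(f_i(R), f_i(Q))$, this becomes $\varphi_i(f_i(N)) \leq f_i(N)$ in $S_i$. Since $f_i(N)$ is finite and $\varphi_i$ is injective, $\varphi_i$ restricts to a bijection of $f_i(N)$, whence $f_i(N) \leq \varphi_i(f_i(R)) = f_i(Q)$. Equivalently, $N \leq Q \cdot N_i$ for every $i$. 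Intersecting over $i$ gives $N \leq \bigcap_i (Q \cdot N_i) = Q$, the last equality being the standard fact that the closed subgroup $Q$ equals the intersection of its open neighbourhoods, as $\{N_i\}$ is a neighbourhood base at $1$.

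The main obstacle is precisely this containment $N \leq Q$: strong closure alone only yields $\varphi(N) \leq N$, and in a profinite setting an injective self-map of a closed subgroup of $S$ need not be surjective, so one cannot immediately conclude $\varphi(N) = N$. The finite-quotient argument is what makes the passage succeed (and as a byproduct one obtains $\varphi(N) = N$ by applying the same argument to $\varphi^{-1}$). Beyond this, the corollary is a formal consequence of Lemma~\ref{Lemma14} and the definition of $\Ff/N$ in~\ref{lemma4}.
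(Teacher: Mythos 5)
Your proof is correct and follows essentially the same route as the paper: lift $\bar R$ to its preimage, pick a fully normalized subgroup $Q$ in its $\Ff$-isomorphism class, and invoke Lemma~\ref{Lemma14}. Your extra verification that $N \leq Q$ (which the paper leaves implicit) is sound, though it can be obtained more quickly by noting that $\varphi|_N \in \Hom_\Ff(N,N)$ is an endomorphism in $\Ff$, hence an isomorphism by Lemma~\ref{IncIsoDec}, so $\varphi(N)=N \leq Q$.
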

\begin{proof}
Let $\oline Q$ be a subgroup of $S/N$ and let $Q$ be its inverse image in $S$. Since $\Ff$ is saturated, there exists a subgroup $R$ in the $\Ff$-isomorphism class of $Q$ that is fully normalized in~$\Ff$. By Lemma \ref{Lemma14}, the subgroup $\oline R:=R/N$ is fully normalized in~$\Ff/N$. Since $\oline R$ is in the $\Ff/N$-isomorphism class of $\oline Q$, this implies that $\Ff/N$ is saturated.
\end{proof}

%%%%%%%%%%%%%%%%%%%%%%%%%%%%%%%%%%%%%%%%%%%%
\section{Morphisms of pro-fusion systems}%%%
%%%%%%%%%%%%%%%%%%%%%%%%%%%%%%%%%%%%%%%%%%%%

Because we have defined a pro-fusion system as an inverse limit of fusion systems, it is incumbent upon us to deal carefully with morphisms and quotients. A morphism between two pro-fusion systems is defined exactly as for fusion systems, that is to say as a homomorphism, $\alpha$, between the underlying pro-$p$ groups that satisfies certain conditions which can be expressed by the existence of a certain functor, $A$, just as in Definition~\ref{def:mor}. We now have a category of pro-fusion systems. It is easy to verify that if $\Ff = \ilim_{i \in I} \Ff_i$ is a pro-fusion system then $\Ff$ really is the inverse limit in this category.

Everything in Paragraph~\ref{lemma4} is still valid for pro-fusion systems, provided that $Q$ is taken to be open.

It might appear that this definition of morphism does not capture all the information in the inverse limit system, but we will show that, in fact, it does so in the strongest possible reasonable way. We phrase this as a continuity condition, analogous to the characterization of continuity for maps of profinite sets or groups. It is, in fact, equivalent to a morphism of the underlying pro-systems, as the reader who is familiar with such things will notice.

\begin{Definition}
\label{def:cont}
Let $\Ff = \ilim_{i \in I} \Ff_i$ and $\Gg = \ilim_{j \in J}\Gg_j$ be pro-fusion systems, with canonical sets of morphisms $\{(f_i,F_i):\Ff\rightarrow\Ff_i\mid i\in I\}$ and $\{(g_j,G_j):\Gg\rightarrow\Gg_j\mid j\in J\}$. A morphism $(\alpha,A) : \Ff \rightarrow \Gg$ is said to be {\it continuous} if for each $j \in J$ there is an $i \in I$ such that the composition $\Ff \stackrel{(\alpha,A)}{\rightarrow} \Gg \stackrel{(g_j,G_j)}{\rightarrow} \Gg_j$ factors through $\Ff \stackrel{(f_i,F_i)}{\rightarrow} \Ff_i$.
\end{Definition}

This definition appears to depend on the ways in which $\Ff$ and $\Gg$ are expressed as inverse limits (but see Corollary~\ref{cor:indeplim}).

If $\Ff= \ilim_{i\in I} \Ff_i$, we can consider the fusion systems $\langle F_i(\Ff) \rangle \subseteq \Ff_i$. These can be assembled into an inverse limit system and we can consider $\ilim_{i\in I} \langle F_i(\Ff) \rangle$.

\begin{Lemma}
\label{la:finim}
Suppose that $\Ff = \ilim_{i\in I} \Ff_i$ is a pro-fusion system. Then for each $i \in I$ there is a $j \geq i$ such that $F_i(\Ff) = F_{i,j}(\Ff_j)$.
\end{Lemma}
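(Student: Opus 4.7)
The plan is to use the finiteness of $\Ff_i$ to show that the family $\{F_{i,j}(\Ff_j)\}_{j\ge i}$ (a collection of subsets of $\Mor(\Ff_i)$) stabilizes at some $j_0$, and then to identify the stable value with $F_i(\Ff)$ via a Mittag-Leffler style lift.

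For every $k\ge j\ge i$, the relation $F_{i,k}=F_{i,j}\circ F_{j,k}$ yields $F_{i,k}(\Ff_k)\subseteq F_{i,j}(\Ff_j)$; in particular, $F_i(\Ff)\subseteq F_{i,j}(\Ff_j)$ for every $j\ge i$, since each $\varphi\in\Mor(\Ff)$ satisfies $F_i(\varphi)=F_{i,j}(F_j(\varphi))$. The family $\{F_{i,j}(\Ff_j)\}_{j\ge i}$ is therefore filtered downward under reverse inclusion (any two indices are dominated in $I$) and lives in the \emph{finite} power set $2^{\Mor(\Ff_i)}$. For each $\psi\in\Mor(\Ff_i)\setminus\bigcap_{j\ge i}F_{i,j}(\Ff_j)$, I pick a witness $j(\psi)\ge i$ with $\psi\notin F_{i,j(\psi)}(\Ff_{j(\psi)})$, and let $j_0\in I$ dominate the finitely many $j(\psi)$'s. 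The descending property then forces $F_{i,j_0}(\Ff_{j_0})=\bigcap_{j\ge i}F_{i,j}(\Ff_j)$, so the family really does stabilize.

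It remains to show $F_{i,j_0}(\Ff_{j_0})\subseteq F_i(\Ff)$. Fix $\psi\in F_{i,j_0}(\Ff_{j_0})$ and set $X_k:=\{\varphi_k\in\Mor(\Ff_k)\mid F_{i,k}(\varphi_k)=\psi\}$ for each $k\ge i$. Each $X_k$ is finite, and nonempty by stabilization: for any $k\ge i$, picking $k'\ge k,j_0$ gives $\psi\in F_{i,k'}(\Ff_{k'})\subseteq F_{i,k}(\Ff_k)$. The functors $F_{j,k}$ restrict to maps $X_k\to X_j$ for $k\ge j\ge i$, producing an inverse system of nonempty finite sets over the directed set $\{k\in I:k\ge i\}$. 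By the standard compactness argument (Tychonoff in the product of discrete finite sets), this inverse limit is nonempty, yielding a coherent family $(\varphi_k)_{k\ge i}$, which extends uniquely to all of $I$ by cofinality. The coherence of the $\varphi_k$'s pins down inverse systems of domains $A_k=\mathrm{dom}(\varphi_k)$ and codomains $B_k$ (since $F_{j,k}(\varphi_k)=\varphi_j$ forces $f_{j,k}(A_k)=A_j$); their inverse limits $P,Q\le S$ are closed subgroups, and the $(\varphi_k)$ assemble into a morphism $\varphi\in\Hom_\Ff(P,Q)$ with $F_i(\varphi)=\psi$, finishing the proof.

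The main obstacle is the Mittag-Leffler/compactness lift in the second half: one must check both that the $X_k$ have nonempty inverse limit, and that a coherent lift in the inverse system of morphisms really yields a morphism of $\Ff$ between honest closed subgroups of $S$. Both follow from standard inverse-limit machinery applied to the functors $F_{j,k}$, but deserve to be spelled out.
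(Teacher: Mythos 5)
Your argument is correct and is essentially the paper's own proof: both identify $F_i(\Ff)$ with $\bigcap_{j\ge i}F_{i,j}(\Ff_j)$ by lifting a morphism through an inverse system of non-empty finite fibres $Y_j=\{\theta\in\Ff_j\mid F_{i,j}(\theta)=\varphi\}$, and both then invoke the finiteness of $\Mor(\Ff_i)$ together with directedness of $I$ to see that the descending family $F_{i,j}(\Ff_j)$ attains its intersection at some $j$. You merely perform the two steps in the opposite order and spell out more carefully the details the paper leaves implicit (the choice of a dominating index $j_0$ and the assembly of a coherent family into a morphism between closed subgroups of $S$).
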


\begin{proof}
For each $j \geq i$ consider $X_j = \langle F_{i,j}(\Ff_j) \rangle \subseteq \Ff_i$. Suppose that $\varphi \in \cap_{j \geq i} X_j$ and let $Y_j = \{ \theta \in \Ff_j \mid F_{i,j}(\theta)=\varphi \}$. These $Y_j$ form an inverse system of finite sets, so their inverse limit is non-empty. An element of the inverse limit defines an element $\tilde{\varphi} \in \Ff$ such that $F_i(\tilde{\varphi})=\varphi$, and we conclude that $F_i(\Ff) = \cap_{j \geq i} X_j$.
Since the sets $X_i$ are finite, we must have $F_i(\Ff)= F_{i,j}(\Ff_j)$ for some $j \geq i$.
\end{proof}

\begin{Lemma}
\label{la:angle}
The identity map on $S$ induces an isomorphism between $\Ff$ and $\ilim_{i\in I} \langle F_i(\Ff) \rangle$. It is continuous in both directions.
\end{Lemma}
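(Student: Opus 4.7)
The plan is to set $\Ff'_i := \langle F_i(\Ff) \rangle$ and observe that the transition morphisms $F_{i,j}$ restrict to an inverse system of fusion systems on the same groups $S_i$, so $\ilim_{i\in I}\Ff'_i$ is a pro-fusion system on $\ilim_{i\in I} S_i = S$. Two morphisms covering $\id_S$ arise naturally: the inclusions $\Ff'_i \hookrightarrow \Ff_i$ assemble into $\ilim_{i\in I}\Ff'_i \to \Ff$, and the fact that $F_i(\Ff) \subseteq \Ff'_i$ lets the cone $(F_i)$ factor through the $\Ff'_i$, producing $\Ff \to \ilim_{i\in I}\Ff'_i$ by the universal property.

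To show that these are mutually inverse isomorphisms of categories, I would compare $\Hom$-sets. For $P,Q \leq S$, the definition of the inverse limit gives
\[
\Hom_{\ilim \Ff'_i}(P,Q) = \ilim_{i\in I} \Hom_{\Ff'_i}(f_i(P), f_i(Q)) \subseteq \ilim_{i\in I} \Hom_{\Ff_i}(f_i(P), f_i(Q)) = \Hom_\Ff(P,Q),
\]
while the reverse inclusion is immediate from $F_i(\varphi) \in F_i(\Ff) \subseteq \Ff'_i$ for every $\varphi \in \Hom_\Ff(P,Q)$. Since both morphisms restrict to $\id_S$ on underlying groups, this shows they are inverse to each other.

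The direction $\ilim_{i\in I}\Ff'_i \to \Ff$ is tautologically continuous: for each $k$, its composition with $F_k$ factors through the canonical projection $\ilim_{i\in I}\Ff'_i \to \Ff'_k$ followed by the inclusion $\Ff'_k \hookrightarrow \Ff_k$. Continuity of $\Ff \to \ilim_{i\in I}\Ff'_i$ is the step that requires actual work, and it is exactly what Lemma~\ref{la:finim} provides: given $j$, choose $k \geq j$ with $F_{j,k}(\Ff_k) = F_j(\Ff) \subseteq \Ff'_j$, so that $F_{j,k}$ honestly descends to a morphism $\Ff_k \to \Ff'_j$; the composite $\Ff \xrightarrow{F_k}\Ff_k \xrightarrow{F_{j,k}}\Ff'_j$ then agrees with the projection of our candidate onto $\Ff'_j$. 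The main obstacle is precisely this factoring step — without Lemma~\ref{la:finim} one only knows that $F_{j,k}(\Ff_k)$ generates $\Ff'_j$, not that it is contained in it.
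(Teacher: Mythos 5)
Your proof is correct and follows essentially the same route as the paper: the isomorphism and the continuity of $\ilim_{i\in I}\langle F_i(\Ff)\rangle \to \Ff$ are the straightforward parts, and the continuity of the other direction is obtained exactly as in the paper by invoking Lemma~\ref{la:finim} to get $F_{i,j}(\Ff_j)=F_i(\Ff)\subseteq\langle F_i(\Ff)\rangle$ so that $F_{i,j}$ descends. You have merely written out in more detail the steps the paper dismisses as easy, and your closing remark correctly identifies why the lemma is indispensable.
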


\begin{proof}
It is easy to see that we have an isomorphism and that the morphism $\ilim_{i\in I} \langle F_i(\Ff) \rangle \rightarrow \Ff$ is continuous.

For the other direction, consider some fixed morphism $\Ff \stackrel{F_i}{\rightarrow} \Ff_i$. Choose $j \geq i$ as in Lemma~\ref{la:finim} so that $F_i(\Ff) = F_{i,j}(\Ff_j)$. Thus $F_{i,j}$ has image in $\langle F_i(\Ff) \rangle$, and so $F_i$ factors through $\Ff_j$.
\end{proof}

\begin{Proposition}
\label{thm:cont}
Every morphism between pro-fusion systems is continuous.
\end{Proposition}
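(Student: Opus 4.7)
Fix $j\in J$; the plan is to produce $i\in I$ and a morphism $(\beta,B):\Ff_i\to\Gg_j$ of fusion systems satisfying $(\beta,B)\circ(f_i,F_i)=(g_j,G_j)\circ(\alpha,A)$. Write $(\gamma,C):=(g_j,G_j)\circ(\alpha,A):\Ff\to\Gg_j$. Since $\alpha$ is continuous between profinite groups and $M_j:=\Ker(g_j)$ is open in~$T$, the subgroup $\alpha^{-1}(M_j)$ is open in~$S$ and therefore contains some~$N_i$; I fix such an~$i$. Then $\gamma=g_j\alpha$ descends to a homomorphism~$\beta$ with $\beta f_i=\gamma$, defined a priori on $f_i(S)$. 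I first apply Lemma~\ref{la:angle} to replace the presentations by $\ilim\langle F_i(\Ff)\rangle$ and $\ilim\langle G_j(\Gg)\rangle$, so that the canonical maps become surjective onto the base groups and $\beta$ becomes a genuine homomorphism between them; the continuity of the identity isomorphism in Lemma~\ref{la:angle} will then allow me to transfer the resulting factorization back to the original presentation by replacing~$i$ with a suitable $i'$ in~$I$.

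The functor~$B$ is defined on the image $F_i(\Ff)$ by $B(F_i(\varphi)):=C(\varphi)$, and well-definedness is the critical computation: if $F_i(\varphi_1)=F_i(\varphi_2)$ for $\varphi_1,\varphi_2\in\Hom_\Ff(P,Q)$, then $\varphi_1(x)^{-1}\varphi_2(x)\in N_i$ for every $x\in P$, so $\alpha\varphi_1(x)\equiv\alpha\varphi_2(x)\pmod{M_j}$ and therefore $\gamma\varphi_1=\gamma\varphi_2$ on~$P$. The intertwining relation $C(\varphi_k)\circ\gamma|_P=\gamma\circ\varphi_k$, which comes from the morphism property of $(\gamma,C)$, then forces $C(\varphi_1)$ and $C(\varphi_2)$ to agree on $\gamma(P)$, hence to coincide as morphisms $\gamma(P)\to\gamma(Q)$ in~$\Gg_j$.

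To promote $B$ to a functor on every morphism of $\Ff_i=\langle F_i(\Ff)\rangle$, I observe that $N:=\Ker(\gamma)=\alpha^{-1}(M_j)$ is strongly $\Ff$-closed as the kernel of a morphism of pro-fusion systems (cf.~\ref{lemma4}), whence $f_i(N)=\Ker(\beta)$ is strongly $\Ff_i$-closed. This allows $\beta$ to induce~$B$ by passage to the quotient by $\Ker(\beta)$: for any $\bar\varphi:\bar P\to\bar Q$ in $\Ff_i$, set $B(\bar\varphi)$ equal to the induced map $\bar P/(\bar P\cap f_i(N))\to\bar Q/(\bar Q\cap f_i(N))$. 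This agrees with~$C$ on each generator $F_i(\varphi)$, and since $\Gg_j$ is closed under the fusion-system operations that generate it from $G_j(\Gg)$, every $B(\bar\varphi)$ lies in~$\Gg_j$. The pair $(\beta,B)$ is then by construction a morphism of fusion systems with $(\beta,B)(f_i,F_i)=(\gamma,C)$, and the continuity statement of Lemma~\ref{la:angle} supplies a corresponding $i'\in I$ producing the analogous factorization in the original inverse system.

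The main obstacle is the last extension step. Without the reduction to $\langle F_i(\Ff)\rangle$, the candidate~$\beta$ need not extend from $f_i(S)$ to all of~$S_i$, and even if an extension exists its kernel can fail to be strongly $\Ff_i$-closed; the partial assignment $F_i(\varphi)\mapsto C(\varphi)$ would then not promote to a functor on~$\Ff_i$. It is precisely the strong closedness of $\Ker(g_j\alpha)$ in~$\Ff$, inherited from its being the kernel of a morphism of pro-fusion systems, combined with the reduction to the generated subsystem, that makes the canonical extension possible.
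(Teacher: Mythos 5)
Your argument is correct and follows essentially the same route as the paper's: reduce via Lemma~\ref{la:angle} to the case $\Ff_i=\langle F_i(\Ff)\rangle$, use the continuity of $\alpha$ to choose $i$ with $\alpha(N_i)\le\Ker(g_j)$, and define the factoring functor on the generators $F_i(\Ff)$ by $F_i(\varphi)\mapsto G_jA(\varphi)$. Your well-definedness computation and the extension over all of $\langle F_i(\Ff)\rangle$ via the strongly closed kernel are precisely the details the paper compresses into ``it is straightforward to check that this has the right properties.''
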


\begin{proof}
Suppose that $\Ff = \ilim_{i\in I} \Ff_i$ and $\Gg = \ilim _j \Gg_j$ are a pro-fusion systems on $S$ and $T$. By Lemma~\ref{la:angle}, we may assume that $\Ff_i = \langle F_i(\Ff) \rangle$. Let $\alpha:S \rightarrow T$ be a group homomorphism that, together with a functor $A$, induces a morphism $\Ff \rightarrow \Gg$. Fix some $\Gg_j$; since $\alpha$ is continuous, there is an $i$ such that $S \stackrel{\alpha}{\rightarrow} T \stackrel{g_j}{\rightarrow} T_j$ factors through $S \stackrel{f_i}{\rightarrow} S_i$. We need to construct the functor $A_{i,j}$ associated to the factoring map $S_i \rightarrow T_j$; it is sufficient to define this on elements of $F_i(\Ff)$.

Given $\varphi \in F_i(\Ff)$, we have $\varphi = F_i(\tilde{\varphi})$ for some $\tilde{\varphi} \in \Ff$. We define $A_{i,j}(\varphi)=G_jA(\tilde{\varphi})$; it is straightforward to check that this has the right properties.
\end{proof}

The continuity of morphisms has various basic consequences, such as the following ones.

\begin{Corollary}
\label{cor:indeplim}
If a pro-fusion system $\Ff$ can be defined up to isomorphism as an inverse limit in two different ways, say as $\ilim_{i \in I} \Ff_i$ or as $\ilim_{i \in I'} \Ff'_i$, then the isomorphism is continuous. Therefore, the continuity of a morphism of pro-fusion systems does not depend on the representations of the pro-fusion systems as inverse limits.
\end{Corollary}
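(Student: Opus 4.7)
The plan is to derive both assertions directly from Proposition~\ref{thm:cont}, treating the identity map as a morphism of pro-fusion systems expressed in two different ways.

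For the first assertion, suppose $\Ff$ is presented as $\ilim_{i \in I} \Ff_i$ and also as $\ilim_{i' \in I'} \Ff'_{i'}$ via some isomorphism $\varphi$ of pro-fusion systems. At the level of underlying data, $\varphi$ is a homomorphism of the underlying pro-$p$ group $S$ together with a functor on the common category of closed subgroups and morphisms of $\Ff$. In particular $\varphi$ is a morphism of pro-fusion systems in the sense of Definition~\ref{def:mor}, regardless of which presentation we view the source and target through. Applying Proposition~\ref{thm:cont} to $\varphi$, with source expressed via $\ilim \Ff_i$ and target via $\ilim \Ff'_{i'}$, shows $\varphi$ is continuous. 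Applying it again to $\varphi^{-1}$ (with the roles of the presentations reversed) shows the inverse is continuous as well.

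For the second assertion, suppose $(\alpha,A):\Ff \to \Gg$ is a morphism and that $\Ff$ and $\Gg$ are each presented as inverse limits in two ways:
\[
\Ff = \ilim_{i\in I}\Ff_i = \ilim_{i'\in I'}\Ff'_{i'}, \qquad \Gg = \ilim_{j\in J}\Gg_j = \ilim_{j'\in J'}\Gg'_{j'}.
\]
By Proposition~\ref{thm:cont}, $(\alpha,A)$ is continuous with respect to, say, the unprimed presentations. To transfer this to the primed presentations, fix $j' \in J'$. By the first assertion applied to the identity isomorphism $\Gg \to \Gg$ (as $\ilim \Gg_j \to \ilim \Gg'_{j'}$), the projection $\Gg \to \Gg'_{j'}$ factors through some $\Gg \to \Gg_j$. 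Continuity of $(\alpha,A)$ with respect to the unprimed presentations then gives a factorization of $\Ff \to \Gg \to \Gg_j$ through some $\Ff \to \Ff_i$. Finally, the first assertion applied to the identity $\Ff \to \Ff$ (as $\ilim \Ff'_{i'} \to \ilim \Ff_i$) lets us factor $\Ff \to \Ff_i$ through some $\Ff \to \Ff'_{i'}$. Composing, we obtain the required factorization of $\Ff \to \Gg'_{j'}$ through $\Ff'_{i'}$, proving continuity in the primed presentations.

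No substantial obstacle is anticipated: the entire content is already packaged in Proposition~\ref{thm:cont} together with the observation that an isomorphism between two presentations is itself a morphism of pro-fusion systems, so this corollary is essentially a formal consequence.
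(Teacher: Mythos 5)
Your proposal is correct and is essentially the argument the paper intends: the paper gives no explicit proof of this corollary, presenting it as an immediate consequence of Proposition~\ref{thm:cont}, and your observation that the isomorphism between presentations is itself a morphism of pro-fusion systems (hence continuous, as is its inverse) is exactly the point. The composition-of-factorizations argument for the second assertion is also the standard formal deduction and is carried out correctly.
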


\begin{Corollary}
Let $\Ff$ be a pro-fusion system on a a pro-$p$ group $S$ and let $P$ and $Q$ be subgroups of $S$. Then the topology on $\Hom_{\Ff}(P,Q)$ depends only on the isomorphism class of $\Ff$.

If $(\alpha,A):\Ff \rightarrow \Gg$ is a morphism of pro-fusion systems, then the group homomorphism $\Aut_{\Ff}(P) \rightarrow \Aut_{\Gg}(\alpha(P))$ induced by $A$ is a continuous homomorphism of profinite groups and the induced map $\Hom_{\Ff}(P,Q) \rightarrow \Hom _{\Gg}(\alpha(P),\alpha(Q))$ is a continuous map of profinite sets.
\end{Corollary}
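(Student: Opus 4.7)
The plan is to deduce both claims from Proposition~\ref{thm:cont} and Corollary~\ref{cor:indeplim}: once every morphism between pro-fusion systems is known to be continuous in the sense of Definition~\ref{def:cont}, each assertion reduces to a routine check on Hom-sets via the universal property of the inverse-limit topology.

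For the first claim, I would consider two presentations $\Ff=\ilim_{i\in I}\Ff_i=\ilim_{i'\in I'}\Ff'_{i'}$ of the same pro-fusion system. Corollary~\ref{cor:indeplim} says that the identity isomorphism of $\Ff$, viewed as a morphism between these two presentations, is continuous in both directions. Specializing Definition~\ref{def:cont} to the morphism sets, this means that every basic open in the topology on $\Hom_\Ff(P,Q)$ coming from one system is also open in the topology coming from the other, and symmetrically, so the two topologies coincide.

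For the second claim, I first note that $\Aut_\Ff(P)$ is profinite by the remark immediately after Definition~\ref{ConstrProFusSt}, and that $A$ respects composition of homomorphisms, so the induced maps on $\Aut$ and on $\Hom$ are, respectively, a group homomorphism and a map of profinite sets (as soon as continuity is established). To prove continuity, fix $j\in J$; Proposition~\ref{thm:cont} furnishes an index $i\in I$ and a functor $A_{i,j}$ with $G_jA=A_{i,j}F_i$. Evaluated on morphism sets, this yields the commutative diagram
\begin{equation*}
\Hom_\Ff(P,Q)\xrightarrow{F_i}\Hom_{\Ff_i}(f_i(P),f_i(Q))\xrightarrow{A_{i,j}}\Hom_{\Gg_j}(g_j\alpha(P),g_j\alpha(Q)),
\end{equation*}
whose composition equals the projection of $A$ to the $j$-th factor. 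The leftmost map is continuous by definition of the inverse-limit topology and the target is finite, so the composition is continuous for every $j$. The universal property of the inverse-limit topology on $\Hom_\Gg(\alpha(P),\alpha(Q))=\ilim_j\Hom_{\Gg_j}(g_j\alpha(P),g_j\alpha(Q))$ then gives continuity of $A$ itself, and setting $Q=P$ gives the statement about $\Aut$.

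The main obstacle is not really in this corollary at all: it was already overcome in establishing Proposition~\ref{thm:cont}. The only mild point of care is making sure, when applying Definition~\ref{def:cont}, that the factoring functor really descends to the Hom-sets in the expected way; this is immediate because both sides of the factorization are functors and the projections $F_i$, $G_j$ act on Hom-sets entrywise.
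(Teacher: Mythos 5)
Your proposal is correct and follows exactly the route the paper intends: the corollary is stated as a "basic consequence" of Proposition~\ref{thm:cont} and Corollary~\ref{cor:indeplim} with no written proof, and your argument (factoring $G_jA$ through a finite discrete $\Hom_{\Ff_i}$ for each $j$ and invoking the universal property of the inverse-limit topology, with the two-presentations case handled by the continuity of the identity in both directions) is the standard filling-in of those details.
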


\begin{Proposition}
\label{prop:fbar}
For any pro-fusion system $\Ff$ on $S$ we have $\Ff \cong \ilim _{N \osc} \langle\oline\Ff_N \rangle$, where the limit is taken over the set of open strongly closed subgroups of $S$ ordered by inclusion.
\end{Proposition}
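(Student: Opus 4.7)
The plan is to leverage Lemma~\ref{la:angle}, which already gives $\Ff \cong \ilim_{i\in I} \langle F_i(\Ff)\rangle$ whenever $\Ff = \ilim_{i\in I}\Ff_i$, and then to observe that the indexing can be enlarged from $\{N_i\}$ to all open strongly $\Ff$-closed subgroups without changing the inverse limit, because the $N_i$ form a cofinal subsystem.

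First I would record that each $N_i := \Ker(f_i)$ is open (by construction in Definition~\ref{ConstrProFusSt}) and strongly $\Ff$-closed (by the remark in paragraph~\ref{lemma4}), and that $\{N_i\}_{i\in I}$ is a fundamental system of open neighbourhoods of $1$ in $S$. Under the canonical identification $S_i \cong S/N_i$, the fusion system $\langle F_i(\Ff)\rangle$ on $S_i$ coincides with $\langle\oline\Ff_{N_i}\rangle$ on $S/N_i$: both are, by definition, the fusion system on the relevant quotient generated by the homomorphisms of subgroups of that quotient induced by the morphisms of $\Ff$.

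Next I would verify the cofinality statement: any open strongly $\Ff$-closed subgroup $N$ of $S$ is in particular an open subgroup, hence contains some $N_i$. Moreover, for open strongly closed subgroups $N \leq M$, the natural projection $S/N \to S/M$ sends generators of $\langle\oline\Ff_N\rangle$ to generators of $\langle\oline\Ff_M\rangle$, yielding a morphism $\langle\oline\Ff_N\rangle \to \langle\oline\Ff_M\rangle$ and making the family into an inverse system. Together, these two facts say that the embedding $i \mapsto N_i$ from $I$ into the poset of open strongly $\Ff$-closed subgroups is cofinal, whence
\[
\ilim_{N \osc}\langle\oline\Ff_N\rangle \;\cong\; \ilim_{i\in I}\langle\oline\Ff_{N_i}\rangle \;\cong\; \ilim_{i\in I}\langle F_i(\Ff)\rangle \;\cong\; \Ff,
\]
the last isomorphism being Lemma~\ref{la:angle}. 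Continuity of the resulting isomorphism in both directions is then automatic by Proposition~\ref{thm:cont}.

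The only mildly delicate step is the cofinality of the $N_i$ among open strongly closed subgroups, which boils down to the fact that the topology of $S$ is already determined by the $\{N_i\}$; the rest of the argument is essentially formal manipulation of the definitions and a direct appeal to Lemma~\ref{la:angle}.
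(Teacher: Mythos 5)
Your proof is correct and follows essentially the same route as the paper: both arguments rest on the cofinality of the kernels $N_i$ in the poset of open strongly $\Ff$-closed subgroups, the identification of $\langle\oline\Ff_{N_i}\rangle$ with $\langle F_i(\Ff)\rangle$ under $S/N_i \cong f_i(S)$ (which the paper records as a commutative diagram of functors), and the content of Lemmas~\ref{la:finim} and~\ref{la:angle}. The only cosmetic difference is that you invoke Lemma~\ref{la:angle} directly at the end, whereas the paper first identifies $\Ff$ with its image $\ilim_{N}\oline\Ff_N$ and then checks that inserting the angle brackets does not change the limit.
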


\begin{proof}
There is a natural morphism $\Ff \to \langle\oline\Ff_N \rangle$. The image of $\Ff$ in $\langle\oline\Ff_N \rangle$ is $\oline\Ff_N$, so the image of $\Ff$ in $\ilim _{N \osc} \langle\oline\Ff_N \rangle$ is $\ilim _{N \osc} \oline\Ff_N $, by a standard result about profinite sets, and $\Ff$ is isomorphic to its image.

Suppose that $\Ff= \ilim_{i\in I} \Ff_i$ and let $N_i=\Ker (f_i)$, for all $i\in I$. By Lemma~\ref{la:angle}, we may assume that $\Ff_i = \langle F_i(\Ff) \rangle$. Let $X$ be the set $\{N_i\mid i\in I\}$; then $X$ is cofinal in the set of all open strongly closed subgroups of $S$, so we may replace $\ilim_{N \osc}$ by $\ilim_{N \in X} $.

Given $M \in X$, we know that $M=N_i$ for some $i$, so $S/M \cong f_i(S)$. By Lemma~\ref{la:finim}, there is a $j \geq i$ such that $F_{i,j}(\Ff_j) = F_i(\Ff)$.

We have a commutative diagram of groups
\[
\begin{CD}
S @>>> S/N_j @>>> S/M \\
@|     @V{\equiv}VV @V{\equiv}VV \\
S @>{f_j}>> f_j(S) @>{f_{i,j}}>> f_i(S),
\end{CD}
\]
     in which the arrows on the top row are the quotient maps and the vertical isomorphisms are uniquely determined. This extends to a diagram of functors
\[
\begin{CD}
\Ff @>>> \langle\oline\Ff_{N_j} \rangle @>>> \langle\oline\Ff_M \rangle \\
@|     @V{\equiv}VV @V{\equiv}VV \\
\Ff @>{F_j}>> \Ff_j @>{F_{i,j}}>> \Ff_i,
\end{CD}
\]
in which the vertical maps are isomorphisms, because the domain and codomain are both defined in the same way, as the fusion system generated by the morphisms between the subgroups of $S/N_j$ or of $S/M$ that are induced by morphisms in $\Ff$. Thus the image of the morphism $\langle\oline\Ff_{N_j} \rangle \to \langle\oline\Ff_M \rangle $ is $\oline\Ff_M$.

This forces $\ilim _{N \in X} \langle\oline\Ff_N \rangle = \ilim _{N \in X} \oline\Ff_N$.
\end{proof}

\npar
\label{countbase}
There are many equivalent characterizations of countably based profinite groups. The two that will be useful to us are that there are only countably many open subgroups and that the group can be expressed as an inverse limit of finite groups indexed by the set $\mathbb N$ of natural numbers ordered in the usual way. Any finitely generated profinite group is countably based.

We define a pro-fusion system to be {\it countably based\/} if it is isomorphic to an inverse limit of fusion systems indexed by the natural numbers with the usual ordering.

\begin{Lemma}
A pro-fusion system $\Ff$ on the pro-$p$ group $S$ is countably based if and only if $S$ is countably based.
\end{Lemma}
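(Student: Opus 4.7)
One direction is essentially by definition: if $\Ff \cong \ilim_{n \in \mathbb{N}} \Ff_n$ with each $\Ff_n$ a fusion system on a finite $p$-group $S_n$, then $S \cong \ilim_{n \in \mathbb{N}} S_n$ exhibits $S$ as an inverse limit of finite $p$-groups indexed by $\mathbb{N}$, so $S$ is countably based.

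For the converse, suppose $S$ is countably based. The plan is to apply Proposition~\ref{prop:fbar}, which gives $\Ff \cong \ilim_{N \osc} \langle\oline\Ff_N \rangle$, where $N$ ranges over the open strongly $\Ff$-closed subgroups of $S$ ordered by reverse inclusion. Since $S$ is countably based it has only countably many open subgroups (see \ref{countbase}), and in particular only countably many open strongly closed subgroups, say $\{M_1,M_2,\ldots\}$. Set $N_k := M_1 \cap M_2 \cap \cdots \cap M_k$. A finite intersection of open subgroups is open, and a finite (indeed any) intersection of strongly closed subgroups is again strongly closed, because the defining condition $\varphi(R) \le N$ for morphisms $\varphi$ passes to intersections. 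Hence each $N_k$ is an open strongly $\Ff$-closed subgroup of $S$, and $N_{k+1} \le N_k$.

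The family $\{N_k\}_{k \in \mathbb{N}}$ is cofinal in the poset of open strongly $\Ff$-closed subgroups, since every $M_i$ contains $N_i$. Restricting the inverse limit of Proposition~\ref{prop:fbar} to this cofinal subsystem yields
\[
\Ff \cong \ilim_{k \in \mathbb{N}} \langle\oline\Ff_{N_k} \rangle,
\]
and each $\langle\oline\Ff_{N_k} \rangle$ is a fusion system on the finite $p$-group $S/N_k$. This realises $\Ff$ as an inverse limit of fusion systems on finite $p$-groups indexed by $\mathbb{N}$ with its usual ordering, so $\Ff$ is countably based.

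The only point that needs a moment's care is that finite intersections of open strongly closed subgroups are again open and strongly closed; once this is noted, the reduction to a countable cofinal chain together with Proposition~\ref{prop:fbar} does all the work, so no substantial obstacle arises.
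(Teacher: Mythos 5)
Your proof is correct and follows essentially the same route as the paper: both directions are handled identically, with the converse using Proposition~\ref{prop:fbar} together with the countable chain of finite intersections of the open strongly closed subgroups as a cofinal subsystem. The extra remark that finite intersections of open strongly closed subgroups remain open and strongly closed is a small but welcome clarification of a point the paper leaves implicit.
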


\begin{proof}
If $\Ff$ is countably based then so is $S$, directly from the definitions.

Conversely,  by Proposition~\ref{prop:fbar} we know that $\Ff \cong \ilim _{N \osc} \langle\oline\Ff_N \rangle$. If $S$ is countably based then it has only countably many open subgroups, so it certainly has only countably many open strongly closed subgroups; enumerate them as $N_1,N_2, \ldots $.

Recursively, choose $M_1=N_1$ and $M_r=M_{r-1} \cap N_r$.   Then $\{ M_r \} _{r \in \mathbb N}$ is ordered in the same way as $\mathbb N$ and is cofinal in the set of open strongly closed subgroups of $S$. Hence $\Ff \cong \ilim _{r \in \mathbb N} \langle\oline\Ff_{M_r} \rangle$.
\end{proof}

%%%%%%%%%%%%%%%%%%%%%%%%%%%%%%%%%%%%%%%%%
\section{Pro-saturated fusion systems}%%%
%%%%%%%%%%%%%%%%%%%%%%%%%%%%%%%%%%%%%%%%%

We introduce pro-saturated fusion systems and show that they are well behaved.

\begin{Definition}\label{ProFusionSystems}
We say that a pro-fusion system is a {\it pro-saturated fusion system} if it is isomorphic to an inverse limit of saturated fusion systems on finite $p$-groups.
\end{Definition}

\begin{Lemma}
\label{la:satim}
Suppose that $\Ff= \ilim_{i\in I} \Ff_i$, where the $\Ff_i$ are saturated fusion systems. Then $F_i(\Ff)$ is  a saturated fusion system for all $i\in I$, and $\Ff \cong \ilim_{i\in I} F_i(\Ff)$.
\end{Lemma}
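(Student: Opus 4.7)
The plan is to reduce everything to the two tools already available: Lemma~\ref{la:finim}, which says the image $F_i(\Ff)$ stabilizes at a finite stage of the inverse system, and Corollary~\ref{thmPuig}(a), Puig's result that the image of a saturated fusion system under a morphism is again a saturated fusion system.

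First I would handle saturation of $F_i(\Ff)$. Fix $i \in I$. By Lemma~\ref{la:finim} there exists $j \ge i$ such that $F_i(\Ff) = F_{i,j}(\Ff_j)$. Since $\Ff_j$ is by hypothesis a saturated fusion system on the finite $p$-group $S_j$, and $(f_{i,j},F_{i,j}) : \Ff_j \to \Ff_i$ is a morphism of fusion systems, Corollary~\ref{thmPuig}(a) applies: $F_{i,j}(\Ff_j)$ is a saturated fusion system on $f_{i,j}(S_j) = f_i(S)$, isomorphic to $\Ff_j/\Ker(f_{i,j})$. Hence $F_i(\Ff)$ is saturated.

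For the second claim, I would observe that the same equality $F_i(\Ff) = F_{i,j}(\Ff_j)$ shows in particular that $F_i(\Ff)$ is already closed under composition of morphisms (so it genuinely is a fusion system, not merely a collection of morphisms), and therefore $\langle F_i(\Ff)\rangle = F_i(\Ff)$ for every $i$. Lemma~\ref{la:angle} then gives a continuous isomorphism
\[
\Ff \;\cong\; \ilim_{i \in I} \langle F_i(\Ff)\rangle \;=\; \ilim_{i \in I} F_i(\Ff),
\]
induced by the identity on $S$, with transition maps the restrictions of the original $F_{i,j}$ to the subsystems $F_j(\Ff) \to F_i(\Ff)$.

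There is no real obstacle here; the only point that needs a touch of care is verifying that the combinatorial image $F_i(\Ff)$ really is a fusion system in the sense of~\ref{FinFusSt} on the group $f_i(S)$ (objects being all its subgroups, containment of inner automorphisms, and the isomorphism-then-inclusion factorization). All three follow immediately: surjectivity of $f_i$ onto $f_i(S)$ gives every subgroup as $f_i(P)$, conjugations $c_{f_i(s)}$ are images $F_i(c_s)$, and the decomposition is transported by the functor $F_i$. Once one observes that Puig's corollary does the heavy lifting via the factorization $\Ff_j \twoheadrightarrow \Ff_j/\Ker(f_{i,j}) \hookrightarrow \Ff_i$, both statements drop out.
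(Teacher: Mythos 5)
Your proof is correct and follows exactly the paper's argument: Lemma~\ref{la:finim} to identify $F_i(\Ff)$ with $F_{i,j}(\Ff_j)$, Corollary~\ref{thmPuig} for saturation of the image, and Lemma~\ref{la:angle} for the inverse limit identification. The only difference is that you spell out why $\langle F_i(\Ff)\rangle = F_i(\Ff)$, a point the paper leaves implicit.
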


\begin{proof}
From Lemma~\ref{la:finim} we know that $F_i(\Ff)=F_{i,j}(\Ff_j)$ for some $j \geq i$. By Theorem~\ref{thmPuig}, $F_{i,j}(\Ff_j)$ is a saturated fusion system and thus, by Lemma~\ref{la:angle}, $\Ff \cong \ilim_{i\in I} F_i(\Ff)$.
\end{proof}

\begin{Lemma}\label{propuig0}
Let $\Ff= \ilim_{i\in I} \Ff_i$ be a pro-saturated fusion system.
Then for each $\varphi\in\Hom_\Ff(P,Q)$ and each $i\in I$ there exists a $\tilde\varphi \in \Hom_\Ff(f_i^{-1}f_i(P), f_i^{-1}f_i(Q))$ such that $F_i(\tilde\varphi)=F_i(\varphi)$.
\end{Lemma}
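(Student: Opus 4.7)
The plan is to invoke Puig's extension theorem (Theorem~\ref{thmPuig0}) inside each finite saturated fusion system $\Ff_j$ with $j \geq i$, and then to assemble the resulting extensions into a single morphism of $\Ff$ via a finite inverse limit. First, by Lemma~\ref{la:satim}, I may replace each $\Ff_j$ by the saturated fusion system $F_j(\Ff)$ and so assume that every projection $f_j : S \to S_j$ is surjective. For $j \geq i$, set $N_{i,j} := \Ker(f_{i,j}) \nor S_j$; this is strongly closed in $\Ff_j$ as the kernel of a morphism of fusion systems (Paragraph~\ref{lemma4}), and surjectivity of $f_j$ gives $f_j(N_i) = N_{i,j}$, so that $f_j(PN_i) = f_j(P) N_{i,j}$ and likewise $f_j(QN_i) = f_j(Q) N_{i,j}$.

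For each $j \geq i$, Theorem~\ref{thmPuig0} applied in $\Ff_j$ to $F_j(\varphi) \in \Hom_{\Ff_j}(f_j(P), f_j(Q))$ and the strongly closed subgroup $N_{i,j}$ produces a morphism in the finite set
\[
X_j := \{\psi \in \Hom_{\Ff_j}(f_j(PN_i), f_j(QN_i)) \mid \psi \text{ and } F_j(\varphi) \text{ induce the same map modulo } N_{i,j}\},
\]
so $X_j$ is non-empty. I claim that $\{X_j\}_{j \geq i}$ is an inverse system under the transition functors $F_{j,k}$: if $\psi \in X_k$ with $k \geq j \geq i$, then the identity $f_{i,j} \circ f_{j,k} = f_{i,k}$ forces $f_{j,k}(N_{i,k}) \subseteq N_{i,j}$, so the equality of $\psi$ and $F_k(\varphi)$ modulo $N_{i,k}$ in $\Ff_k$ descends to the equality of $F_{j,k}(\psi)$ and $F_j(\varphi)$ modulo $N_{i,j}$ in $\Ff_j$, i.e.\ $F_{j,k}(\psi) \in X_j$.

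Consequently $\ilim_{j \geq i} X_j$ is non-empty, being an inverse limit of finite non-empty sets. By directedness of $I$, the subset $\{j \in I : j \geq i\}$ is cofinal, so any element of this inverse limit canonically extends to an element of $\ilim_{j \in I} \Hom_{\Ff_j}(f_j(PN_i), f_j(QN_i)) = \Hom_\Ff(f_i^{-1}f_i(P), f_i^{-1}f_i(Q))$. Taking $j=i$, the condition defining $X_i$ is a quotient by the trivial group $N_{i,i} = 1$, so the resulting $\tilde\varphi$ satisfies $F_i(\tilde\varphi) = F_i(\varphi)$ exactly. The only step needing genuine verification is the compatibility claim for the $X_j$ under the transition functors; the remainder is routine profinite assembly.
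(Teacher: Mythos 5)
Your proof is correct and follows essentially the same route as the paper's: apply Theorem~\ref{thmPuig0} at each finite level $j \geq i$ to get a finite non-empty set $X_j$ of candidate extensions (your condition ``agrees with $F_j(\varphi)$ modulo $N_{i,j}$'' is the same as the paper's ``has image $F_i(\varphi)$ in $\Ff_i$''), and then take an inverse limit of non-empty finite sets. The only differences are cosmetic: you make explicit the reduction to surjective projections via Lemma~\ref{la:satim} and the compatibility of the $X_j$ under the transition functors, both of which the paper leaves implicit.
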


\begin{proof}
Let $\Ff= \ilim_{i\in I} \Ff_i$ with the $\Ff_i$ saturated.
For each $j \geq i$ let $X_j$ be the set of the morphisms in $\Hom_{\Ff_j}(f_{i,j}^{-1}f_i(P), f_{i,j}^{-1}f_i(Q))$ with image $F_i(\varphi)$ in $\Ff_i$. Then each $X_j$ is a finite set, and it is non-empty, by Theorem~\ref{thmPuig0}. An inverse limit of non-empty finite sets is non-empty, and an element of $\ilim_{j \geq i} X_j$ determines a morphism  $\tilde\varphi \in \Hom_\Ff(f_i^{-1}f_i(P), f_i^{-1}f_i(Q))$.
\end{proof}

Given a pro-fusion system $\Ff$ on $S$ and an open strongly closed subgroup $N$ of $S$ we can form the fusion system $\Ff/N$ just as in the finite case.

\begin{Proposition}
\label{propuig1}
Let $\Ff$ be a pro-saturated fusion system on $S$ and let $N$ be an open strongly $\Ff$-closed subgroup of $S$. For any $\varphi\in\Hom_\Ff(P,Q)$ there exists a $\tilde\varphi \in \Hom_\Ff(PN,QN)$ such that $\varphi$ and $\tilde\varphi$ induce the same homomorphism $PN/N \to QN/N$. As a consequence, $\langle\oline\Ff_N \rangle = \Ff /N$, and so there is a natural morphism $\Ff \to \Ff/N$.
\end{Proposition}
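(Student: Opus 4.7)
The plan is to imitate the proof of Lemma~\ref{propuig0}, but now applying Puig's theorem~\ref{thmPuig0} in each finite piece with respect to the strongly closed subgroup $f_i(N)$ rather than a kernel of $f_{i,j}$. By Lemmas~\ref{la:angle} and~\ref{la:satim} we may assume throughout that $\Ff_i=\langle F_i(\Ff)\rangle$ is saturated for every $i\in I$. Since $N$ is open, the set $J:=\{i\in I\mid N_i\le N\}$ is cofinal in $I$, and for $i\in J$ we have $f_i^{-1}(f_i(N))=NN_i=N$.

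The first main step is to show that $f_i(N)$ is strongly closed in $\Ff_i$ for each $i\in J$. Since every morphism of $\langle F_i(\Ff)\rangle$ is a composition of restrictions of morphisms $F_i(\varphi)$ with $\varphi\in\Hom_\Ff(A,B)$, it suffices to handle a single such $\varphi$. If $R\le f_i(N)$ sits inside $f_i(A)$, set $\tilde R:=A\cap f_i^{-1}(R)$; then $f_i(\tilde R)=R$ and $\tilde R\le f_i^{-1}(f_i(N))=N$, so $\varphi(\tilde R)\le N$ by strong $\Ff$-closedness of $N$, and applying $f_i$ yields $F_i(\varphi)(R)\le f_i(N)$. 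With this in hand, Theorem~\ref{thmPuig0} applied in the saturated $\Ff_i$ produces, for each $i\in J$, a non-empty finite set $Y_i$ of extensions $\psi_i\in\Hom_{\Ff_i}(f_i(PN),f_i(QN))$ which agree with $F_i(\varphi)$ modulo $f_i(N)$. The transition maps $F_{i,j}:Y_j\to Y_i$ for $j\ge i$ in $J$ turn $\{Y_i\}_{i\in J}$ into an inverse system of non-empty finite sets whose limit is non-empty, and any element of it defines the required $\tilde\varphi\in\Hom_\Ff(PN,QN)$.

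To check that $\tilde\varphi$ and $\varphi$ induce the same homomorphism $PN/N\to QN/N$, fix $u\in P$ and any $i\in J$. Then $f_i(\tilde\varphi(u)\varphi(u)^{-1})\in f_i(N)$, so $\tilde\varphi(u)\varphi(u)^{-1}\in f_i^{-1}(f_i(N))=N$; since $\tilde\varphi(N)\le N$ by strong closedness of $N$, this is enough. For the stated consequence, every generator of $\langle\oline\Ff_N\rangle$ is the image modulo $N$ of some $\varphi\in\Hom_\Ff(P,Q)$, and via the extension $\tilde\varphi\in\Hom_\Ff(PN,QN)$ it coincides with the morphism of $\Ff/N$ induced by $\tilde\varphi$, whose source now contains $N$. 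Hence every generator of $\langle\oline\Ff_N\rangle$ lies in $\Ff/N$, while the reverse containment is immediate from the definitions, so $\langle\oline\Ff_N\rangle=\Ff/N$. The natural morphism $\Ff\to\Ff/N$ then follows from the discussion in~\ref{lemma4}.

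The main obstacle is precisely the strong closedness of $f_i(N)$ in $\Ff_i$: it is essential first to replace $\Ff_i$ by the isomorphic $\langle F_i(\Ff)\rangle$ and then to restrict to the cofinal $J$, because otherwise the strong closedness of $N$ in $\Ff$ does not visibly descend to $\Ff_i$ and Puig's theorem cannot be invoked to make the $Y_i$ non-empty. Once this has been arranged, the rest is the familiar inverse-limit-of-non-empty-finite-sets argument already used in Lemma~\ref{propuig0}.
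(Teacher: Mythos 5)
Your proof is correct, but it takes a more self-contained route than the paper. The paper disposes of the first assertion in two lines: it picks some $N_i \leq N$ and simply invokes Lemma~\ref{propuig0} relative to $\Ff_i$, the point being that a $\tilde\varphi$ agreeing with $\varphi$ modulo $N_i$ automatically agrees with it modulo the larger subgroup $N$; the identity $\langle\oline\Ff_N \rangle = \Ff /N$ is then read off from the definitions. You instead rerun the inverse-limit-of-non-empty-finite-sets argument from scratch, applying Theorem~\ref{thmPuig0} in each $\Ff_i$ with respect to the strongly closed subgroup $f_i(N)$ rather than with respect to a kernel. This costs you the extra step of verifying that $f_i(N)$ is strongly $\Ff_i$-closed — your verification is correct, and the preliminary reduction to $\Ff_i=\langle F_i(\Ff)\rangle$ via Lemmas~\ref{la:angle} and~\ref{la:satim}, together with the restriction to the cofinal set $J$, is exactly what makes it work — but it buys you a morphism defined on all of $f_i(PN)$ at every finite level, hence on $PN$ itself in the limit. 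This is in fact tidier than the paper's citation of Lemma~\ref{propuig0}, which literally produces a morphism with domain $PN_i \leq PN$ and leaves the further extension from $PN_i$ to $PN$ implicit. Both arguments rest on the same two pillars, Puig's extension theorem in the finite case and compactness of inverse limits of non-empty finite sets, so the difference is one of packaging rather than of substance.
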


\begin{proof}
If $N$ is one of the $N_i$, then this is just a reformulation of Lemma~\ref{propuig0}. Otherwise, there is some $N_i \leq N$, and we can use the $\tilde\varphi$ produced by using Lemma~\ref{propuig0} relative to $\Ff_i$.

The last part follows from the first part and the definition of $\Ff/N$.
\end{proof}

\begin{Proposition}\label{Proposition7} If $\Ff$ is a pro-saturated fusion system on $S$, then $\Ff\cong\ilim_{N \osc}\Ff/N$ as $N$ runs through all open strongly $\Ff$-closed subgroups of $S$ ordered by inclusion.
\end{Proposition}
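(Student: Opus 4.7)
The plan is to combine Proposition~\ref{prop:fbar} with Proposition~\ref{propuig1}. Proposition~\ref{prop:fbar} already gives, for any pro-fusion system, an isomorphism $\Ff \cong \ilim_{N \osc} \langle\oline\Ff_N\rangle$ where $N$ runs over open strongly $\Ff$-closed subgroups of $S$ ordered by inclusion. Under the hypothesis that $\Ff$ is pro-saturated, Proposition~\ref{propuig1} identifies $\langle\oline\Ff_N\rangle$ with $\Ff/N$ for every open strongly $\Ff$-closed $N$. So at the level of individual terms the two inverse systems coincide.

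What remains is to verify that these identifications are compatible with the transition maps, so that the two inverse limits are canonically isomorphic. For $N \leq N'$ both open and strongly $\Ff$-closed, there is a natural morphism $\langle\oline\Ff_N\rangle \to \langle\oline\Ff_{N'}\rangle$ induced by the quotient map $S/N \to S/N'$ (this is exactly the morphism appearing in Proposition~\ref{prop:fbar}). Under the identifications of Proposition~\ref{propuig1}, this morphism corresponds to the natural morphism $\Ff/N \to \Ff/N'$ also induced by $S/N \to S/N'$, because in both cases a morphism on the quotient side arises from the same morphism in $\Ff$ followed by reduction. Hence the two inverse systems are isomorphic term-by-term in a way compatible with transition maps, and the induced isomorphism on inverse limits yields $\Ff \cong \ilim_{N \osc} \Ff/N$.

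The one mildly non-trivial point, and the only place the pro-saturated hypothesis is used, is the identification $\langle\oline\Ff_N\rangle = \Ff/N$: this is not automatic for general pro-fusion systems, but Proposition~\ref{propuig1} guarantees it in the pro-saturated case by providing, for each $\varphi \in \Hom_\Ff(P,Q)$, an extension $\tilde\varphi \in \Hom_\Ff(PN,QN)$ inducing the same map on the quotients by $N$. Once this is in hand, the proof is essentially a bookkeeping exercise comparing two inverse limits over the same directed set with canonically isomorphic terms and morphisms.
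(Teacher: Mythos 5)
Your proof is correct and follows exactly the same route as the paper, which simply cites Proposition~\ref{prop:fbar} together with Proposition~\ref{propuig1}; your additional verification that the identifications $\langle\oline\Ff_N\rangle = \Ff/N$ are compatible with the transition maps is the implicit bookkeeping the paper leaves to the reader.
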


\begin{proof}
This follows from Proposition~\ref{prop:fbar} and Lemma~\ref{propuig1}.
\end{proof}

\npar We denote the restriction of a pro-fusion system $\Ff$ on $S$ to the open subgroups of $S$ by $\Ff^o$. If $\Ff$ is pro-saturated then we can tell whether an open subgroup $N$ of $S$ is strongly $\Ff$-closed or not just by considering $\Ff^o$. For there is some $N_i \leq N$ and, if $P \leq N$ and $\varphi(P) \not \leq N$, then Proposition~\ref{propuig1} shows that $PN_i \leq N$ but $\tilde \varphi (PN_i) \not \leq N$. It is clear from the definition that $\Ff/N$ only depends on $\Ff^o$.

From these considerations and using Proposition~\ref{Proposition7}, we see that if $\Ff$ and $\Gg$ are both pro-saturated fusion systems on $S$ and $\Ff^o=\Gg^o$ then $\Ff=\Gg$. In a sense, pro-saturated fusion systems are completely determined by what happens on open subgroups.

\npar If $\Ff$ is a pro-fusion system on $S$ such that $\Ff^o$ is saturated and $N$ is an open strongly closed subgroup of $S$, then $\Ff/N$ is saturated, as follows directly from the definitions. If $M \leq N$ is also open and strongly closed, then $\Ff/N = (\Ff/M)/(N/M)$, so there is a natural morphism $\Ff/M \to \Ff/N$, by Theorem~\ref{thmPuig}. We set $\Ff_{\ps} = \ilim _{N \osc} \Ff/N$. Then $\Ff_{\ps}$ is pro-saturated and there is a natural morphism $\Ff_{\ps} \to \Ff$ with the property that $\Ff_{\ps}^o = \Ff^o$.

\begin{Example}
Let $p$ be an odd prime and let $S$ be a free pro-$p$ group on two generators $x$ and~$y$. Let $C$ be a cyclic group of order 2 and let it act on $S$ by $x \leftrightarrow y$. Set $G = S\rtimes C$. There are two saturated fusion systems on $S$, $\Ff_S(S)$ and $\Ff_S(G)$: we want to mix them. Define $\Ff$ to be the subcategory of $\Ff_S(G)$ in which the morphisms with cyclic domain are those from $\Ff_S(G)$ and the morphisms with non-cyclic domain are those from $\Ff_S(S)$. We need to check that $\Ff$ is indeed a pro-fusion system.

There is a chain of open subgroups $\{ N_i | i\in I \}$ in $S$, each normal in $G$, such that $S = \ilim_{i\in I} N_i$. Set $G_i=(S/N_i) \rtimes C$. On $S/N_i$ we define a fusion system $\Ff_i$ in an analogous way to $\Ff$. Then $\Ff = \ilim_{i\in I} \Ff_i$.

It is easily verified that $\Ff$ is saturated using the fact that in a free pro-$p$ group the normalizer of a cyclic group is cyclic (just as in the case of an abstract free group).

Also $\Ff_{\ps} = \Ff_S(S)$, so $\Ff$ is not pro-saturated.
\end{Example}

%%%%%%%%%%%%%%%%%%%%%%%%%%%%%%%%%%%%%%%%%%%%%%%%%%%%%%%
\section{Saturation of pro-saturated fusion systems}%%%
%%%%%%%%%%%%%%%%%%%%%%%%%%%%%%%%%%%%%%%%%%%%%%%%%%%%%%%

We show that a pro-saturated fusion system is saturated, at least if it is countably based.

\begin{Theorem}\label{OpenSatAx}
If $\Ff$ is a pro-saturated fusion system, then $\Ff^o$ is saturated.
\end{Theorem}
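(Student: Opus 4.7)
The plan is to reduce everything to the saturation of each finite $\Ff_i$ and then assemble the pieces by a standard compactness argument (inverse limit of finite non-empty sets). Throughout, write $\Ff = \ilim_{i\in I}\Ff_i$ with each $\Ff_i$ saturated; by Lemma~\ref{la:satim} and Lemma~\ref{la:angle} we may assume $\Ff_i = F_i(\Ff)$, so each transition map $F_{i,j}\colon \Ff_j \to \Ff_i$ is surjective on morphisms and identifies $\Ff_i$ with $\Ff_j/(N_i/N_j)$.

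Fix an open $Q\le S$ and $i_0$ with $N_{i_0}\le Q$. For $j\ge i_0$, write $Q_j=Q/N_j$, and let
\[
Y_j=\bigl\{(R_j,\varphi_j)\ :\ R_j\le S_j\text{ is fully normalized in }\Ff_j,\ \varphi_j\in\Iso_{\Ff_j}(Q_j,R_j)\bigr\}.
\]
Each $Y_j$ is finite and non-empty (by saturation of $\Ff_j$). The maps $(R_j,\varphi_j)\mapsto(f_{i,j}(R_j),F_{i,j}(\varphi_j))$ make $\{Y_j\}$ into an inverse system: the kernel $N_{i_0}/N_j$ of $F_{i_0,j}$ is strongly closed in $\Ff_j$ and is contained in $Q_j$, hence also in every $R_j$ of its $\Ff_j$-isomorphism class, so $N_i/N_j\le R_j$ whenever $i_0\le i\le j$; then Lemma~\ref{Lemma14} applied to $\Ff_j$ with $N=N_i/N_j$ gives that $f_{i,j}(R_j)$ is fully normalized in $\Ff_j/(N_i/N_j)=\Ff_i$. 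Since a limit of finite non-empty sets is non-empty, choose $(R,\varphi)\in\ilim Y_j$; this yields an open subgroup $R\le S$ (it contains $N_{i_0}$ by the strong-closure argument above) together with $\varphi\in\Iso_\Ff(Q,R)$.

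It remains to check that $R$ is fully normalized in $\Ff$. For fully automized: $N_S(R)=\ilim N_{S_j}(R/N_j)$ and $C_S(R)=\ilim C_{S_j}(R/N_j)$, so $\Aut_S(R)=\ilim \Aut_{S_j}(R/N_j)$, sitting inside $\Aut_\Ff(R)=\ilim \Aut_{\Ff_j}(R/N_j)$ as the inverse limit of a compatible family of Sylow $p$-subgroups; by the standard result for profinite groups this is a Sylow pro-$p$ subgroup of $\Aut_\Ff(R)$. For receptiveness: given $\psi\in\Iso_\Ff(R',R)$, write $\psi=(\psi_j)$ with $\psi_j\in\Iso_{\Ff_j}(R'/N_j,R/N_j)$. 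Conjugation compatibility yields $f_j(N_\psi)\le N_{\psi_j}$, and since each $R/N_j$ is receptive there is an extension $\tilde\psi_j\colon N_{\psi_j}\to N_{S_j}(R/N_j)$ in $\Ff_j$. Let
\[
\tilde Z_j=\bigl\{\phi_j\in\Hom_{\Ff_j}\bigl(f_j(N_\psi),\,N_{S_j}(R/N_j)\bigr)\ :\ \phi_j|_{R'/N_j}=\psi_j\bigr\};
\]
this is finite and non-empty (restrict $\tilde\psi_j$), and $F_{i,j}(\tilde Z_j)\subseteq \tilde Z_i$ because $f_{i,j}(f_j(N_\psi))=f_i(N_\psi)$ and $f_{i,j}(N_{S_j}(R/N_j))\le N_{S_i}(R/N_i)$. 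A thread through $\ilim \tilde Z_j$ assembles to $\tilde\psi\in\Hom_\Ff(N_\psi,N_S(R))$ extending $\psi$, using $N_\psi=\ilim f_j(N_\psi)$ and $N_S(R)=\ilim N_{S_j}(R/N_j)$.

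The main obstacle is bookkeeping in the three compactness arguments: one must check that at each stage the inverse system really is defined (the relevant subgroups contain $N_i/N_j$ so that quotients make sense, the Sylows are compatible in the right direction, and the extensions $\tilde\psi_j$ can be restricted to the cofinally shrinking domains $f_j(N_\psi)$). Once these compatibilities are verified, saturation of $\Ff^o$ is formal from saturation of each $\Ff_j$.
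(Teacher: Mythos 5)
Your proposal is correct and follows essentially the same route as the paper's proof: a compactness argument over the finite quotients (using Lemma~\ref{Lemma14} to get the transition maps) to produce a representative $R$ whose image is fully normalized in each $\Ff_j$, a second inverse-limit-of-finite-nonempty-sets argument to verify receptivity, and the identification of $\Aut_S(R)$ as an inverse limit of Sylow $p$-subgroups to verify that $R$ is fully automized. The only cosmetic difference is that the paper phrases the last step via density of $\Aut_S(R)$ in a Sylow pro-$p$ subgroup rather than invoking the inverse-limit-of-Sylows fact (which is the paper's Lemma~\ref{Lemma18}).
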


\begin{proof}
Suppose that we are given an open subgroup $Q\le S$. Notice that $\{N_i\mid N_i<Q\}$ is cofinal in $\{N_i | i \in I \}$, so $\Ff\simeq\ilim_{N_i<Q}\Ff_i$. Thus we can assume that all the $N_i$ are subgroups of $Q$. For each $N_i<Q$ let $X_i:=\{\varphi\in\Hom_{\Ff_i}(f_i(Q),S_i)\mid \varphi(f_i(Q))\text{ is fully normalized in~$\Ff_i$}\}$. Since $\Ff_i$ is a saturated fusion system and $S_i$ is finite, $X_i$ is a finite non-empty set. By Lemma~\ref{Lemma14}, for every $N_j<N_i$ there is a natural map $X_j\to X_i$; thus $\ilim_{N_i<Q} X_i\ne\emptyset$. An element of the limit determines a map $\psi:Q\to S$ such that $f_i(\psi(Q))$ is fully normalized in~$\Ff_i$ for each $N_i<Q$.

We show that $P=\psi(Q)$ is fully normalized in~$\Ff$. Given $\varphi:R\to P$, it induces $\oline\varphi_i:R/N_i\to P/N_i$, which, given that $P/N_i$ is fully normalized in~$\Ff_i$, extends to $\rho_i:N_{\overline\varphi _i}\to N_{S/N_i}(Q/N_i)$. For $N_j<N_i$ there is a natural map $N_{\overline\varphi_j}\to N_{\overline\varphi_i}$; hence any extension $\rho_j$ induces an extension $\rho_i$. Let $X_i$ be the set of all possible extensions of $\oline\varphi_i$ to $N_{\overline\varphi_i}$. These are finite non-empty sets and there are natural maps $X_j\to X_i$ for $N_j<N_i$. Thus $\ilim_{N_i<Q}X_i$ is non-empty and an element in this set induces a map $\ilim_{N_i<Q}N_{\overline\varphi_i}\to \ilim_{N_i<Q}N_{S/N_i}(P/N_i)$, i.e. a map $N_\varphi\to N_S(P)$.

The image of $N_S(P)$ in $S/N_i$ is $N_{S/N_i}(P/N_i)$, and this maps to a Sylow $p$-subgroup of $\Aut_{\Ff_i}(P/N_i)$. Since this is true for all $i \in I$, $\Aut_S(P)$ is dense in a Sylow $p$-subgroup of $\Aut_\Ff(P)$. But $\Aut_S(P)$ is closed, so it is a Sylow $p$-subgroup of $\Aut_\Ff(P)$. Thus, $\Ff^o$ satisfies the saturation axioms.
\end{proof}

\begin{Theorem}\label{prosatsat}
If $\Ff$ is a countably based pro-saturated fusion system then $\Ff$ is saturated.
\end{Theorem}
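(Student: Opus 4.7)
The plan is to extend the saturation of $\Ff^o$ (Theorem~\ref{OpenSatAx}) from open to all closed subgroups, using the countably-based hypothesis at two critical points. By Proposition~\ref{Proposition7} I may write $\Ff\cong\ilim_{n\in\mathbb{N}}\Ff/N_n$ for a descending sequence $N_1\ge N_2\ge\cdots$ of open strongly $\Ff$-closed subgroups with trivial intersection, and each $\Ff/N_n$ is then a finite saturated fusion system. Given a closed subgroup $Q$ of $S$, the task is to produce $P$ in its $\Ff$-isomorphism class that is fully normalized in $\Ff$, and the argument will proceed in three steps.

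First I would construct $P=\psi(Q)$ for some $\psi\in\Hom_\Ff(Q,S)$ with $f_n(P)=PN_n/N_n$ fully normalized in $\Ff/N_n$ for every $n$. Let $X_n$ denote the finite non-empty set of morphisms $\alpha\in\Hom_{\Ff/N_n}(f_n(Q),S/N_n)$ with fully normalized image. Mimicking the inverse-limit construction in the proof of Theorem~\ref{OpenSatAx}, the reduction maps $X_m\to X_n$ for $m\ge n$ should be extracted from Lemma~\ref{Lemma14} applied inside $\Ff/N_m$ with strongly closed subgroup $N_n/N_m$; for closed $Q$ a small extra argument, which first extends $\alpha_m$ along the strongly closed subgroup $N_n/N_m$ using Theorem~\ref{thmPuig0} in $\Ff/N_m$, ensures the hypothesis of Lemma~\ref{Lemma14} is met. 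A coherent choice in $\ilim_n X_n$ then assembles, via the identification $\Hom_\Ff(Q,S)=\ilim_n\Hom_{\Ff/N_n}(f_n(Q),S/N_n)$, to the required $\psi$.

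The main obstacle will be verifying that $P$ is fully $\Aut_\Ff(P)$-automized, since, in contrast to the open case, $N_S(P)\to N_{S/N_n}(f_n(P))$ is generally not surjective. I would write $\Aut_\Ff(P)\cong\ilim_n G_n$, where $G_n$ is the eventual image of $\Aut_\Ff(P)$ in $\Aut_{\Ff/N_n}(f_n(P))$ (so the transitions $G_m\to G_n$ are surjective), and then prove that $H_n:=\{c_y|_{f_n(P)}\mid y\in N_S(P)\}$ is a Sylow $p$-subgroup of $G_n$ for every $n$. For each $m\ge n$, the image $I_m:=\{c_y|_{f_n(P)}\mid y\in N_S(PN_m)\}$ of the Sylow subgroup $\Aut_{S/N_m}(f_m(P))$ is a Sylow $p$-subgroup of $G_n^{(m)}$, the image of $\Aut_{\Ff/N_m}(f_m(P))$ in $\Aut_{\Ff/N_n}(f_n(P))$; since both $\{G_n^{(m)}\}_m$ and $\{I_m\}_m$ are decreasing in the finite ambient group $\Aut_{\Ff/N_n}(f_n(P))$, they stabilize, and for $m$ large enough $I_m$ is Sylow in $G_n$ of constant order. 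The crux will be the identity $\bigcap_m I_m=H_n$: for $\gamma\in\bigcap_m I_m$, pick $y_m\in N_S(PN_m)$ with $c_{y_m}|_{f_n(P)}=\gamma$; since $S$ is countably based hence metrizable, the sequence $\{y_m\}$ in the compact group $S$ has a convergent subsequence $y_{m_k}\to y$, whose tail lies in the closed subgroup $N_S(PN_{m_0})$ for every $m_0$, forcing $y\in\bigcap_{m_0}N_S(PN_{m_0})=N_S(P)$ (the last equality is a standard pro-$p$ calculation using that $P$ is closed and $\{N_n\}$ is a neighborhood basis at $1$), and continuity then yields $\gamma=c_y|_{f_n(P)}\in H_n$.

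Finally, receptiveness of $P$ in $\Ff$ will follow from a similar but easier inverse-limit argument: given an $\Ff$-isomorphism $\varphi:R\to P$, the receptiveness of each $f_n(P)$ in $\Ff/N_n$ (part of full normalization at level $n$) furnishes extensions of $\bar\varphi_n$ to morphisms $N_{\bar\varphi_n}\to N_{S/N_n}(f_n(P))$, and the finite non-empty sets of their restrictions to $f_n(N_\varphi)$ form an inverse system over $n$ whose limit yields $\tilde\varphi\in\Hom_\Ff(N_\varphi,S)$ extending $\varphi$, with image in $\ilim_n N_{S/N_n}(f_n(P))=N_S(P)$. Combined with the previous step, this shows that $P$ is fully normalized in $\Ff$, completing the proof.
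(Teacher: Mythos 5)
Your overall architecture (choose a representative whose image at every finite level is fully normalized, then verify receptivity and full automization by inverse-limit arguments) is the same as the paper's, and your second and third steps are essentially sound: the receptivity argument is the one in the paper, and your hands-on Sylow argument with the stabilizing chains $G_n^{(m)}\supseteq I_m$ and the compactness identification $\bigcap_m I_m=H_n$ is a valid, if more laborious, substitute for the paper's Lemma~\ref{Lemma18} applied directly to the family $N_{S/P_i}(RP_i/P_i)\to\Aut_{\Ff/P_i}(RP_i/P_i)$.

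The gap is in your first step, in the well-definedness of the transition maps $X_m\to X_n$. Given $\alpha_m\in X_m$ with $R_m:=\alpha_m(f_m(Q))$ fully normalized in $\Ff/N_m$, Theorem~\ref{thmPuig0} does extend $\alpha_m$ to a morphism defined on $f_m(Q)\cdot(N_n/N_m)$, whose image is $R_m\cdot(N_n/N_m)$, and this subgroup does contain $N_n/N_m$ --- but Lemma~\ref{Lemma14} also requires that \emph{this} subgroup, $R_m\cdot(N_n/N_m)$, be fully normalized in $\Ff/N_m$, whereas you only know that $R_m$ itself is. Being fully normalized is not inherited by the product with a strongly closed subgroup, so the induced $\alpha_n$ need not have fully normalized image and your ``inverse system'' is not actually a system of maps into the sets $X_n$. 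This is precisely the difficulty the paper isolates in Lemma~\ref{Lemma17}: for each finite descending chain $P_n\le\cdots\le P_1$ of open strongly $\Ff$-closed subgroups one must produce a single representative $Q$ with $QP_i$ fully normalized for \emph{all} $i$ simultaneously, and the existence of such a $Q$ is a genuine induction using Theorem~\ref{OpenSatAx}, Lemma~\ref{la:equiv_fnorm} and Lemma~\ref{Lemma16}, not a formal consequence of Theorem~\ref{thmPuig0} and Lemma~\ref{Lemma14}. With Lemma~\ref{Lemma17} in hand one redefines $X_n$ to consist of morphisms whose image is fully normalized at every level up to $n$, after which the transition maps are mere restrictions; without it, your first step does not go through.
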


Before the proof we need a lemma.

\begin{Lemma}\label{Lemma17}
Let $\Ff$ be a pro-saturated fusion system on $S$ and $P\le S$, and let $P_n \leq P_{n-1} \leq \dots \leq P_1$ be a non-empty sequence of open, strongly closed subgroups of $S$. Then there exists a subgroup $Q$ in the $\Ff$-isomorphism class of $P$ such that $QP_i$ is fully normalized in~$\Ff$ for each $i \in \{1,\dots,n\}$.
\end{Lemma}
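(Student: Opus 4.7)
The natural approach is induction on $n$. For the base case $n=1$, the open subgroup $PP_1$ lies in some $\Ff$-isomorphism class, and since $\Ff^o$ is saturated by Theorem~\ref{OpenSatAx}, I would pick a morphism $\varphi_0 \in \Hom_\Ff(PP_1,S)$ whose image is fully normalized. Because $P_1$ is open and strongly closed, $\varphi_0(P_1) \leq P_1$, and Lemma~\ref{la:open} forces $\varphi_0(P_1) = P_1$; setting $Q := \varphi_0(P)$ then gives $QP_1 = \varphi_0(PP_1)$ fully normalized.

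For the inductive step, I would apply the induction hypothesis to the shorter chain $P_n \leq \dots \leq P_2$ to obtain $Q'$ in the $\Ff$-isomorphism class of $P$ with $Q'P_i$ fully normalized for $i = 2,\dots,n$. Then I would adjust $Q'$ to make $Q'P_1$ fully normalized without destroying the fullness of normalization of the other $Q'P_i$. Applying the base case (with $Q'$ in place of $P$) produces $\varphi_0 \in \Hom_\Ff(Q'P_1,S)$ with $\varphi_0(Q'P_1)$ fully normalized. I would then enlarge the domain of $\varphi_0$ via Lemma~\ref{la:equiv_fnorm}: with $K = \Aut_\Ff(\varphi_0(Q'P_1))$, the lemma produces $\psi \in \Hom_\Ff(N_S(Q'P_1), N_S(\varphi_0(Q'P_1)))$ agreeing with $\chi \varphi_0$ on $Q'P_1$ for some $\chi \in K$; in particular $\psi(Q'P_1) = \varphi_0(Q'P_1)$ is fully normalized.

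The crux of the argument is the observation that for each $i \geq 2$, $N_S(Q'P_i) \leq N_S(Q'P_1)$. Indeed, each $P_j$ is normal in $S$ because it is strongly closed, so for $x \in N_S(Q'P_i)$ one has $xQ'x^{-1} \leq xQ'P_ix^{-1} = Q'P_i \leq Q'P_1$ and $xP_1x^{-1} = P_1$, whence $x \in N_S(Q'P_1)$. Therefore $\psi$ is defined on $N_S(Q'P_i)$ for every $i \geq 2$, and Lemma~\ref{Lemma16} applies to show that $\psi(Q'P_i)$ is fully normalized. Setting $Q := \psi(Q')$ and using $\psi(P_i) = P_i$ (by Lemma~\ref{la:open}, since $P_i$ is open and strongly closed, and $\psi(P_i)\le P_i$ because $P_i$ is strongly closed), one has $QP_i = \psi(Q'P_i)$ fully normalized for every $i \in \{1,\dots,n\}$, completing the induction.

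The only nontrivial conceptual step is the normalizer inclusion $N_S(Q'P_i) \leq N_S(Q'P_1)$; this is what allows the single map $\psi$ provided by Lemma~\ref{la:equiv_fnorm} to simultaneously preserve full normalization of all previously arranged $Q'P_i$ while correcting the failure at $i=1$. Without the strongly closed (hence normal) property of the $P_j$ together with the nested inclusion $P_i \leq P_1$, this simultaneous extension would not be available and the induction would not close.
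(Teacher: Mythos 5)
Your proof is correct and follows essentially the same route as the paper's: induction on $n$ via Theorem~\ref{OpenSatAx} for the base case, then a single extension $\psi$ on $N_S(Q'P_1)$ obtained from Lemma~\ref{la:equiv_fnorm}, with the inclusion $N_S(Q'P_i)\le N_S(Q'P_1)$ (from normality of the strongly closed $P_j$ and $P_i\le P_1$) allowing Lemma~\ref{Lemma16} to preserve full normalization of all the $Q'P_i$ simultaneously. You spell out the details ($\varphi_0(P_1)=P_1$, $\psi(P_i)=P_i$, the normalizer inclusion) that the paper states more tersely, but the argument is the same.
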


\begin{proof}
We use induction on $n$. The result is true if $n=1$ by Theorem~\ref{OpenSatAx}. Suppose that $n\ge 2$ and that there exists a subgroup $R$ that is $\Ff$-isomorphic to $P$ and such that $RP_i$ is fully normalized in~$\Ff$ for each $2\le i\le n$. By Theorem~\ref{OpenSatAx}, there exists $\varphi \in \Hom _\Ff ( RP_1, S)$ such that $\varphi (RP_1)$ is fully normalized in~$\Ff$. By Lemma~\ref{la:equiv_fnorm}, there is a  $\psi \in \Hom_\Ff (N_S(RP_1), N_S(\varphi(RP_1))$ such that $\varphi(RP_1) = \psi(RP_1) = \psi(R)P_1$. Let $Q:=\psi(R)$. Since $QP_1=(QP_i)P_1$, we find that $N_S(QP_i)\le N_S(QP_1)$, so each $N_S(QP_i)$ lies in the domain of $\psi$. Lemma~\ref{Lemma16} shows that each $RP_i$ is fully normalized in~$\Ff$.
\end{proof}

\begin{proof}[Proof of Theorem~\ref{prosatsat}]

Since $\Ff$ is countably based, we can write $\Ff= \ilim_{i \in \mathbb N} \Ff_i$, and the $P_i:=\Ker(f_i)$ form a sequence of open, strongly closed subgroups of $S$. Given a subgroup $P \leq S$, let~$X_i$ be the set of $\varphi:PP_i/P_i\to S/P_i$ such that $\varphi(P)P_i/P_i$ is fully normalized in~$\Ff/P_i$. By Lemma~\ref{Lemma17}, $X_i$ is finite and non-empty for all $i$. Moreover, there is a natural map $X_{i+1}\to X_i$ induced by the canonical projection $PP_i/P_i\simeq P/(P\cap P_i)\to P/P\cap P_{i+1}\simeq PP_{i+1}/P_{i+1}$. Thus $\ilim_i X_i$ is non-empty, and an element $\psi\in\ilim_i X_i$ induces a map $\psi:P\to S$ such that $\psi(P)P_i/P_i$ is fully normalized in~$\Ff/P_i$ for each $i$. Set $R:=\psi(Q)$; we need to show that $R$ is fully normalized in~$\Ff$.

First we show that every $\varphi:Q\to R$ extends to $N_\varphi$. Since $RP_i/P_i$ is fully normalized in~$\Ff_i$, we know that $\oline\varphi_i:QP_i/P_i\to RP_i/P_i$ extends to a morphism $N_{\overline\varphi_i}\to N_{S/P_i}(R/P_i)$. Let~$X_n$ be the set of all maps $N_\varphi\to N_{S/P_n}(R/P_n)$ of the form $N_\varphi\to N_{\overline\varphi_i}\to N_{S/P_i}(R/P_i)\to N_{S/P_n}(R/P_n)$ for some $i\ge n$. $X_n$ is clearly non-empty; it is also finite, since all the maps have to have $P_n$ in the kernel. Moreover, there is a natural map $X_{n+1}\to X_n$ induced by the projection $N_{S/P_{n+1}}(R/P_{n+1})\to N_{S/P_n}(R/P_n)$. Thus $\ilim_n X_n$ is non-empty and an element of this limit yields $\tilde\varphi:N_\varphi\to N_S(R)$, extending $\varphi$.

Next we show that $\Aut_S(R)$ is a Sylow $p$-subgroup of $\Aut_\Ff(R)$. This is done by applying the following lemma to the family $N_{S/P_i}\to\Aut_{\Ff_i/P_i}(RP_i/P_i)$.
\end{proof}

\begin{Lemma}\label{Lemma18}
Let $\{G_i\}_{i\in I}$ be a directed system of finite groups and $\{S_i\}_{i\in I}$ a directed system of finite $p$-groups. Let $\{\chi_i:S_i\to G_i \}_{i\in I}$ be a map of directed systems such that $\chi_i(S_i)$ is a Sylow $p$-subgroup of $G_i$ for all $i$. Then $\chi(\ilim_{i\in I} S_i)$ is a Sylow $p$-subgroup of $\ilim_{i\in I} G_i$.
\end{Lemma}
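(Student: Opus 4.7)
The plan is to verify the standard criterion that a closed pro-$p$ subgroup $P$ of a profinite group $H$ is a Sylow pro-$p$ subgroup if and only if, for every open normal subgroup $N$ of $H$, the image $PN/N$ is a Sylow $p$-subgroup of the finite group $H/N$. Set $S := \ilim_{i \in I} S_i$, $G := \ilim_{i \in I} G_i$, and $\chi := \ilim_{i \in I} \chi_i : S \to G$. Then $\chi(S)$ is compact, hence closed in $G$, and pro-$p$ because $S$ is; and the kernels $N_i$ of the canonical projections $\pi_i^G : G \to G_i$ form a fundamental system of open normal subgroups of $G$, with $G/N_i \cong \pi_i^G(G)$. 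By commutativity of the transition squares, $\pi_i^G \circ \chi = \chi_i \circ \pi_i^S$, so the image of $\chi(S)$ in $G/N_i$ identifies with $\chi_i(\pi_i^S(S)) \leq \pi_i^G(G)$, and it remains to show that this is a Sylow $p$-subgroup of $\pi_i^G(G)$.

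The key step is a Mittag-Leffler argument. For each $i$, $\pi_i^S(S) = \bigcap_{j \geq i} f_{ij}^S(S_j)$ is a filtered intersection of subsets of the finite set $S_i$ and hence stabilises: some $m \geq i$ satisfies $\pi_i^S(S) = f_{im}^S(S_m)$. Applying the same reasoning to $G$ and raising $m$ further if necessary, we may arrange $\pi_i^G(G) = f_{im}^G(G_m)$ simultaneously. Then
\[
\pi_i^G(\chi(S)) \;=\; \chi_i\bigl(f_{im}^S(S_m)\bigr) \;=\; f_{im}^G\bigl(\chi_m(S_m)\bigr),
\]
which is the image of the Sylow $p$-subgroup $\chi_m(S_m) \leq G_m$ under the surjection $G_m \twoheadrightarrow f_{im}^G(G_m) = \pi_i^G(G)$. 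Since the image of a Sylow $p$-subgroup under a surjective group homomorphism is again Sylow, this is exactly what is required.

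The main obstacle I anticipate is resisting the temptation to argue directly from the hypothesis that $\chi_i(S_i)$ is Sylow in $G_i$ for each $i$. This is insufficient, because the relevant data at level $i$ is really the pair $\pi_i^S(S) \leq S_i$ and $\pi_i^G(G) \leq G_i$, and both of these images may be proper. The Mittag-Leffler step bridges this gap by pulling back, simultaneously for $G$ and for $S$, to an index $m$ at which the images $\pi_i^S(S)$ and $\pi_i^G(G)$ are already attained, and it is the only place where the finiteness of the $S_i$ and $G_i$ (equivalently, compactness of the limits) is used.
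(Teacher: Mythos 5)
Your argument is correct, but it takes a genuinely different route from the paper's. You verify the quotient criterion --- a closed pro-$p$ subgroup is Sylow if and only if its image in every finite quotient is a Sylow $p$-subgroup --- and use a Mittag--Leffler stabilisation to identify the image of $\chi(S)$ in $G/N_i\cong\pi_i^G(G)$ with $f_{im}^G(\chi_m(S_m))$, the surjective image of an honest Sylow $p$-subgroup of $G_m$. The paper instead fixes a Sylow pro-$p$ subgroup $T$ of $G$ containing $\chi(S)$, observes that at each finite level $p_i(T)$ can be conjugated into the Sylow subgroup $\chi_i(S_i)$, and takes an inverse limit of the non-empty finite sets of such conjugators to produce a single $g\in G$ with ${}^gT\le\chi(S)$, forcing $\chi(S)=T$. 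Both proofs invoke compactness exactly once: yours to stabilise images, the paper's to glue conjugators. Your route has the merit of confronting head-on the issue you flag --- that $\pi_i^S(S)$ and $\pi_i^G(G)$ may be proper subgroups of $S_i$ and $G_i$ --- whereas the conjugation argument passes over this point rather quickly (a priori it yields ${}^gT\le\ilim_i\chi_i(S_i)$, and identifying that limit with $\chi(S)$ requires precisely the image bookkeeping you perform). The only step you leave implicit is the reduction from arbitrary open normal subgroups of $G$ to the cofinal family $\{N_i\}$; this is immediate, since for $N_i\le N$ the Sylow subgroup $\chi(S)N_i/N_i$ of $G/N_i$ surjects onto $\chi(S)N/N$ in $G/N$.
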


\begin{proof}
Set $G = \ilim_{i\in I} G_i$ and $S = \ilim_{i\in I} S_i$. It is easy to see that $\chi: S \to G$ is injective.

Let $T$ be a Sylow pro-$p$ subgroup of $G$ such that $\chi(S) \le T$. For each $i$ let $p_i:G\to G_i$ be the canonical projection. Then there exists a $g_i\in G_i$ such that $^{g_i}p_i(T)\le \chi_i(S_i)$. Let $X_i$ be the set of all such $g_i$. There is a natural map $X_i\to X_j$ sending $g_j$ to $p_{i,j}(g_i)$. Hence $\ilim_{i\in I} X_i$ is non-empty, and an element of it yields an element $g \in G$ such that $^gT\le\chi(S)$. Given that $\chi(S)\le T$ we obtain $\chi(S)=T$.
\end{proof}

We do not know whether the hypothesis that the system should be countably based is necessary.

%%%%%%%%%%%%%%%%%%%%%%%%%%%%%%%%%%%%%%%%%%%%%%%%%%%%%%%%%%%%%%%%%%%%%%%%%%%%%%%%%
\section{Invariant subsystems and a relative form of Alperin's Fusion Theorem}%%%
%%%%%%%%%%%%%%%%%%%%%%%%%%%%%%%%%%%%%%%%%%%%%%%%%%%%%%%%%%%%%%%%%%%%%%%%%%%%%%%%%

Let $\Ff$ be a pro-fusion system on a $p$-group $S$. We introduce here the notion of a $T$-subsystem, where $T$ is a strongly $\Ff$-closed subgroup of $S$. This generalizes the notion of an $\Ff$-invariant system in the literature (\cite{aschbacher:NormalSubsystems, craven, Linckelmann}), by retaining as objects in the $T$-subsystem all subgroups of $S$, not just the ones contained in $T$. If $G$ is a finite group with Sylow $p$-subgroup~$S$ and $H$ is a normal subgroup of $G$ then the morphisms between the subgroups of $S$ that are induced by conjugation by an element of $H$ will form an $(H\cap S)$-subsystem of $\Ff_S(G)$. So we are attempting to model a normal subgroup.

\begin{Definition}\label{subsystem}
Let $\Ff$ be a fusion system on a $p$-group $S$ and let $T$ be a strongly $\Ff$-closed subgroup of $S$. A {\it $T$-subsystem\/} $\Ee$ of $\Ff$ is a subcategory of $\Ff$ on the same objects, with morphisms satisfying:
\begin{enumerate}[(a)]
\item $\Ee$ contains every group homomorphism induced by conjugation by an element of $T$,
\item any morphism in $\Ee$ factors as an isomorphism of $\Ee$ followed by an inclusion,
\item for all $P,P'\le Q\le S$ and $\varphi\in\Hom_\Ff(Q,S)$ we have $\varphi |_{P'} \Hom_\Ee(P,P')=\Hom_\Ee(\varphi(P),\varphi(P'))\varphi |_P$,
\item for all $Q\le S$ and $\psi\in\Hom_\Ee(Q,S)$ we have $\psi(u)u^{-1}\in T,\,\forall u\in Q$.
\end{enumerate}
\end{Definition}

Note that the full subcategory of $\Ee$ on the subgroups of $T$ is what is known as an $\Ff$-invariant fusion system in the literature.

A morphism between a $T_1$-subsystem $\Ee_1$ of $\Ff_1$ and a $T_2$-subsystem $\Ee_2$ of $\Ff_2$ is a morphism $(\alpha, A):\Ff_1 \to \Ff_2$ such that $\alpha (T_1) \leq T_2$ and $A(\Ee_1) \subseteq \Ee_2$.

In an analogous way to the saturation of $\Ff$ we define the saturation of $\Ee$.

\begin{Definition}
We say that $Q$ is {\it receptive in $\Ee$\/} if for all $\varphi\in\Iso_\Ee(R,Q)$ there exists $\tilde \varphi :N^T_\varphi\to N_T(Q)Q$ in $\Ee$ such that $\tilde \varphi |_R=\varphi$, where
$$N^T_\varphi=\{x\in N_T(R)R \mid \exists y\in N_T(Q)Q, \varphi(xux^{-1})=y\varphi(u)y^{-1}, \forall u\in R\}\,.$$
\end{Definition}

\begin{Definition}
Let $K\le\Aut_\Ee(Q)$. We say that $Q$ is {\it fully $K$-automized in $\Ee$} if $\Aut_T(Q)\cap K$ is a Sylow $p$-subgroup of $\Aut_\Ee(Q)\cap K$. We say that $Q$ is {\it fully $K$-normalized in~$\Ee$} if $Q$ is receptive and fully $K$-automized in $\Ee$.
\end{Definition}

When $K=\Aut_\Ee(Q)$ we say that $Q$ is {\it fully normalized in~$\Ee$} rather that fully ${\Aut(Q)}$-normalized in~$\Ee$.

\begin{Definition}
A $T$-subsystem $\Ee$ of $\Ff$ is {\it saturated\/} if
 every $\Ee$-isomorphism class contains a subgroup that is fully normalized in~$\Ee$.
\end{Definition}

If $\Ee$ is a saturated $T$-subsystem, then $T$ can be recovered from $\Ee$ as the inverse image of $\Aut_{\Ee}(S)$ under the conjugation map $S \to \Aut_{\Ff}(S)$, so sometimes we will not mention $T$.

\npar We use the same definitions for a $T$-subsystem of a pro-fusion system, with the proviso that each $\Hom_{\Ee}(P,Q)$ should be closed in $\Hom _{\Ff}(P,Q)$. If each $\Hom_{\Ee}(P,Q)$ is open in $\Hom _{\Ff}(P,Q)$ we say that $\Ee$ is open.

The closure condition implies that if $\Ff = \ilim_{i\in I} \Ff_i$, then $\Ee = \ilim_{i\in I} F_i(\Ee)$ as sets and hence that $\Ee = \ilim_{i\in I} \langle F_i(\Ee) \rangle$, where $\langle F_i(\Ee) \rangle$ denotes the $f_i(T)$-subsystem of $\Ff_i$ generated by $F_i(\Ee)$.

\begin{Example}
Let $G$ be a profinite group with Sylow pro-$p$ subgroup $S$. Let $H\lhd G$ and set $T=H \cap S$. Let $\Ee_S(H)$ be the subcategory of $\Ff_S(G)$ with objects the subgroups of $S$ and morphisms all the group homomorphisms induced by conjugation by an element of $H$. Then $\Ee_S(H)$ is a saturated $T$-subsystem of $\Ff_S(G)$.
\end{Example}

\begin{Lemma}
\label{la:equiv_fnorm2}

If $Q$ is fully $K$-normalized in $\Ee$ and $\varphi \in \Iso _{\Ee}(R,Q)$, then there is morphism $\psi : N_T^{{}^{\varphi^{-1}}K}(R)R \rightarrow N_T^K(Q)Q$ in $\Ee$ and an $\chi \in K$ such that $\psi |_R = \chi \varphi$.
\end{Lemma}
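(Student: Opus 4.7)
The plan is to mimic the proof of Lemma~\ref{la:equiv_fnorm} verbatim, with $S$ replaced by $T$ and $\Ff$ by $\Ee$, using the analogous notions of receptivity and full $K$-automization in $\Ee$. The only extra care needed is to verify that the various morphisms and conjugations involved lie in $\Ee$, not just in $\Ff$.

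First, set $L={}^{\varphi^{-1}}K$ and consider the pro-$p$ subgroup ${}^\varphi\Aut_T^L(R)$ of $\Aut_\Ee(Q)$. I would check that ${}^\varphi\Aut_T^L(R)\le\Aut_\Ee^K(Q)$. For containment in $\Aut_\Ee(Q)$, property (c) of Definition~\ref{subsystem} applies: for $c_r\in\Aut_T(R)$ with $r\in N_T(R)$, we have $c_r\in\Aut_\Ee(R)$ by (a), and (c) applied to $\varphi$ produces $\psi'_r\in\Aut_\Ee(Q)$ with $\varphi c_r=\psi'_r\varphi$, i.e.\ ${}^\varphi c_r=\psi'_r\in\Aut_\Ee(Q)$. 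Containment in $K$ is immediate from $c_r\in L$.

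Next, since $Q$ is fully $K$-automized in $\Ee$, $\Aut_T^K(Q)$ is a Sylow pro-$p$ subgroup of $\Aut_\Ee^K(Q)$, so there exists $\chi\in\Aut_\Ee^K(Q)$ with ${}^{\chi\varphi}\Aut_T^L(R)\le\Aut_T^K(Q)$. Because $\chi\in K$ we have ${}^{(\chi\varphi)^{-1}}K={}^{\varphi^{-1}}K=L$. Hence for each $r\in N_T^L(R)$ the conjugation ${}^{\chi\varphi}c_r$ is realized by some $y\in N_T^K(Q)$, which precisely means $r\in N_{\chi\varphi}^T$. Combined with the trivial inclusion $R\le N_{\chi\varphi}^T$ (for $s\in R$, take $y:=\chi\varphi(s)\in Q$), this gives $N_T^L(R)R\le N_{\chi\varphi}^T$. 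Now apply receptivity of $Q$ in $\Ee$ to extend $\chi\varphi$ to $\psi':N_{\chi\varphi}^T\to N_T(Q)Q$ in $\Ee$ and set $\psi:=\psi'|_{N_T^L(R)R}$, so that $\psi|_R=\chi\varphi$ by construction. A short verification shows that the image of $\psi$ actually lies in $N_T^K(Q)Q$: since $\psi$ is a homomorphism extending $\chi\varphi$, for $r\in N_T^L(R)$ and $u\in R$ we have $\psi(r)\chi\varphi(u)\psi(r)^{-1}=\chi\varphi(rur^{-1})$, so conjugation by $\psi(r)$ on $Q$ equals ${}^{\chi\varphi}c_r\in K$, forcing $\psi(r)\in N_T^K(Q)Q$.

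The main obstacle is purely bookkeeping: tracking the $K$-decoration and ensuring that each conjugation step preserves $\Ee$-membership via condition (c) of Definition~\ref{subsystem}. Both issues reduce to the identity ${}^\chi K=K$ for $\chi\in K$ and to the elementary computation in the previous paragraph; no new ingredient beyond the $\Ee$-analogues of receptivity and $K$-automization is required.
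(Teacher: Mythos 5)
Your proof is correct and is exactly what the paper intends: the paper's own proof of this lemma consists of the single remark that the argument is strictly analogous to that of Lemma~\ref{la:equiv_fnorm} and is left to the reader. You have carried out precisely that adaptation, with the worthwhile extra step of using condition (c) of Definition~\ref{subsystem} to check that the conjugated automorphisms ${}^\varphi c_r$ remain in $\Ee$.
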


\begin{proof}
This is strictly analogous to that of Lemma~\ref{la:equiv_fnorm} and is left to the reader.
\end{proof}

\begin{Definition}
Let $\Ff$ be a pro-fusion system on $S$, $T$ a strongly $\Ff$-closed subgroup of $S$, $\Ee$ a saturated $T$-subsystem of $\Ff$ and $Q$ a subgroup of $S$. We say that $Q$ is {\it $\Ee$-radical\/} if $\Aut_{T\cap Q}(Q)=O_p(\Aut_\Ee(Q))$ and that it is {\it $\Ee$-centric\/} if $C_T(Q')\le Z(Q')$ for all $Q'$ in the $\Ee$-isomorphism class of $Q$. A subgroup $Q \leq S$ is {\it $\Ee$-essential\/} if $T\le Q$ or if $Q$ is $\Ee$-centric and $S_p\left(\,\Aut_\Ee(Q)/\Aut_{T\cap Q}(Q)\,\right)$ is disconnected (or empty), where, for a group $G$, $S_p(G)$ denotes the partially ordered set of non-trivial $p$-subgroups of $G$.
\end{Definition}

Notice that an $\Ee$-essential subgroup of $S$ is also $\Ee$-radical.

\begin{Theorem}\label{openAFT}
Let $\Ff$ be a pro-fusion system on $S$, $T$ a strongly $\Ff$-closed subgroup of $S$ and let $\Ee$ be a saturated $T$-subsystem of $\Ff$. Suppose that $P$ and $P'$ are open subgroups of $S$ and that $\varphi\in\Iso_{\Ee}(P,P')$. Then there exist open subgroups of $S$, $P=P_0,P_1,\dots,P_n=P'$ and $Q_1,Q_2,\dots,Q_n$ and morphisms $\varphi_i\in\Aut_\Ee(Q_i)$ for $1 \leq i \leq n$ such that:
\begin{enumerate}[(a)]
\item the $Q_i$ are $\Ee$-essential subgroups of $S$ and are fully normalized in~$\Ee$;
\item for each $i$, $P_{i-1}, P_i\le Q_i$ and $\varphi_i(P_{i-1})=P_i$;
\item $\varphi(u)=\varphi_n\dots\varphi_2\varphi_1(u)$ for all $u\in P$.
\end{enumerate}
\end{Theorem}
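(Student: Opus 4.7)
The plan is to argue by induction on the index $|S:P|$, which is a finite positive integer because $P$ is open in the pro-$p$ group $S$. In the base case $|S:P|=1$ we have $P=P'=S$, so $\varphi\in\Aut_\Ee(S)$; since $T\le S$, the group $S$ is $\Ee$-essential and is trivially fully normalized in $\Ee$, so we may take $n=1$, $Q_1=S=P_0=P_1$, and $\varphi_1=\varphi$.

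For the inductive step, suppose $|S:P|>1$. The class of $\Ee$-isomorphisms between open subgroups that admit a decomposition of the required form is closed under composition, so it suffices to factor $\varphi$ into pieces each of which either satisfies the conclusion directly or can be decomposed by the inductive hypothesis. Using saturation of $\Ee$, choose a fully normalized $P^*$ in the $\Ee$-isomorphism class of $P'$ together with some $\Ee$-iso $\beta:P'\to P^*$; writing $\varphi=\beta^{-1}\circ(\beta\varphi)$ and treating the symmetric reduction on the domain by applying the argument to $\varphi^{-1}$, we may assume both $P$ and $P'$ are fully normalized in $\Ee$. Now apply Lemma~\ref{la:equiv_fnorm2} with $R=P$, $Q=P'$, and $K=\Aut_\Ee(P')$ to obtain $\chi\in\Aut_\Ee(P')$ and a morphism $\psi\in\Hom_\Ee(N_T(P)P,N_T(P')P')$ with $\psi|_P=\chi\varphi$, so that $\varphi=\chi^{-1}\circ\psi|_P$. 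If $N_T(P)P$ strictly contains $P$, then $\psi$ is an $\Ee$-isomorphism between open subgroups of index strictly less than $|S:P|$; the inductive hypothesis decomposes $\psi$, and restricting the resulting chain to $P$ gives a decomposition of $\chi\varphi$. It remains to decompose $\chi^{-1}\in\Aut_\Ee(P')$, which is a single atomic step provided $P'$ is $\Ee$-essential.

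The main obstacle is the residual case where either $N_T(P)P=P$ (so the receptive extension fails to grow the domain) or the fully normalized $P'$ is not $\Ee$-essential. If $T\le P$ or $T\le P'$, then the relevant subgroup is automatically $\Ee$-essential and one concludes by direct inspection; the substantive case is therefore $T\not\le P$ and $T\not\le P'$. Under these hypotheses, non-essentiality together with $\Ee$-centricity (which can be arranged by a preliminary reduction to the $\Ee$-iso class) forces the poset $S_p(\Aut_\Ee(P)/\Aut_{T\cap P}(P))$ to be connected. By the classical correspondence between connectedness of $S_p(G)$ for a finite group $G$ and the absence of a strongly $p$-embedded subgroup of $G$, the group $\Aut_\Ee(P)$ is generated modulo $\Aut_{T\cap P}(P)$ by normalizers of non-trivial $p$-subgroups of $\Aut_\Ee(P)$; each such normalizing automorphism lifts to an element of $\Aut_\Ee(P)$ that extends, via the receptive property, to an automorphism of an open subgroup strictly containing $P$, and the inductive hypothesis applies to each such factor. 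Threading these extensions through yields the desired Alperin decomposition. The technical heart of the argument is this adaptation of the classical Alperin--Goldschmidt reduction to the relative $T$-subsystem setting: because $\Ee$ contains only $T$-conjugations, the receptive extension in Lemma~\ref{la:equiv_fnorm2} grows domains through $N_T$ rather than $N_S$, and the $\Ee$-essential definition is precisely what compensates when the $T$-normalizer fails to grow.
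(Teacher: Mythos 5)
Your overall architecture --- induction on $|S:P|$, reduction via saturation to the case where the target is fully normalized, extension through $N_T(P)P$ via Lemma~\ref{la:equiv_fnorm2}, and a final case analysis on centric/essential using connectivity of the $p$-subgroup poset --- is the same as the paper's. But two steps in your residual case have genuine gaps. First, the non-$\Ee$-centric case: you propose to ``arrange'' $\Ee$-centricity ``by a preliminary reduction to the $\Ee$-iso class'', but $\Ee$-centricity is by definition a property of the entire $\Ee$-isomorphism class ($C_T(Q')\le Z(Q')$ for \emph{all} $Q'$ in the class), so no such reduction is possible: if $P$ is not centric, neither is any conjugate. The correct move, and the one the paper makes, is to show that non-centricity forces $C_T(P)\not\le P$ (extend an isomorphism $P'\to P$ with $C_T(P')\not\le P'$ to $C_T(P')P'\to C_T(P)P$), whence $C_T(P)P\le N^T_\chi$ strictly contains $P$ and the induction applies. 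As written, your argument has no route through the non-centric case.

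Second, the connectivity step. You invoke the finite-group dichotomy between connectedness of $S_p(G)$ and strongly $p$-embedded subgroups to assert that $\Aut_\Ee(P)$ is generated modulo $\Aut_{T\cap P}(P)$ by normalizers of non-trivial $p$-subgroups, and then claim that each such normalizing automorphism extends, via receptivity, to an automorphism of an open subgroup strictly containing $P$. That last claim is not justified: an element $\theta$ normalizing an arbitrary non-trivial $p$-subgroup of $\Aut_\Ee(P)/\Aut_{T\cap P}(P)$ gives $N^T_\theta\gneq P$ only when the relevant subgroup lies inside $\Aut_T(P)$ both before and after applying $\theta$, since it is membership of ${}^{\theta^{-1}}\Aut_{N_T(P)P}(P)\cap\Aut_{N_T(P)P}(P)$ that puts elements into $N^T_\theta$. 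Arranging this is exactly where the work lies: the paper takes an explicit path $U\ge R_1\le U_1\ge\cdots\le U_{m-1}\ge R_m\le{}^{\chi}U$ in $S_p(\Aut_\Ee(P))_{>\Aut_{T\cap P}(P)}$ with $U=\Aut_T(P)$, conjugates each $U_i$ onto $U$ by some $\chi_i$ (Sylow theory), and shows that the successive quotients $\theta_i=\chi_i\chi_{i-1}^{-1}$ satisfy ${}^{\theta_i}({}^{\chi_{i-1}}R_i)={}^{\chi_i}R_i\le U$, hence have $N^T_{\theta_i}\gneq P$; the conjugating elements are thereby woven into the factorization $\chi=\theta_m\cdots\theta_1$. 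None of this appears in your sketch. Note also that $\Aut_\Ee(P)$ is profinite, so the classical finite-group statement cannot be quoted verbatim; the paper avoids this by working directly with the path and Sylow pro-$p$ theory.
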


Notice that if $\Ee=\Ff$ and $S$ is finite then this is just the usual statement of Alperin's Fusion Theorem in the context of saturated fusion systems (see~\cite{puig, BLO2}). Even in the case of a finite group $G$ with normal subgroup $H$ and $\Ff=\Ff_S(G)$ and $\Ee=\Ee_S(H)$, it appears to be new.

\begin{proof}
The proof is modeled on the one for saturated fusion systems. We can use induction on the index $|S:P|$, because $P$ is open, so this index is finite. If $|S:P|=1$, then $n=1$, $S=Q_1=P_0=P_1$ and $\varphi=\varphi_1\in\Aut_\Ee(S)$. If $|S:P|>1$, let $\psi\in\Hom_\Ee(P,S)$ be such that $\psi(P)$ is fully normalized in~$\Ee$. Then $\varphi=(\psi \varphi ^{-1})^{-1}\circ\psi$, where both $\psi\in\Hom_\Ee(P,\psi(P))$ and $\psi\varphi^{-1}\in\Hom_\Ee(\varphi(P),\psi(P))$ have as image the subgroup $\psi(P)$, which is fully normalized in~$\Ee$ and has the same index in $S$ as $P$, by Lemma~\ref{la:open}.
Thus we only need to decompose morphisms in $\Ee$ with image a subgroup of $S$ that is fully normalized in~$\Ee$, because we can splice together a chain for $\psi$ with the inverse of a chain for $\psi \varphi ^{-1}$.

So now we may suppose that $P'$ is fully normalized in $\Ee$. By Lemma~\ref{la:equiv_fnorm2}, there is a morphism $\psi \in \Hom_\Ee(P\N TP,S)$ and a $\chi \in \Aut _\Ee(P')$ such that $\psi | _{P} = \chi \varphi$. If $N_T(P)$ is not contained in $P$ then $N_T(P)P$ has strictly smaller index than $P$, so we can find a chain for $\psi$ by induction.
 If $\N TP\le P$, then  $T \leq P$, as $P$ is open, and so $P$ is $\Ee$-essential. From the last condition in the definition of a subsystem, we know that $\varphi(u)u^{-1}\le T$ for all $u\in P$, which yields $P=P'$ and $\varphi \in\Aut_\Ee(P)$. Therefore we are reduced to the case of decomposing an automorphism of a subgroup that is fully normalized in $\Ee$.

So let $P$ be an open  subgroup of $S$ that is fully normalized in~$\Ee$ and $\chi\in\Aut_\Ee(P)$. There exists a $\tilde\chi\in\Hom_\Ee(N^T_{\chi},T)$ extending $\chi$. If $P$ is not $\Ee$-centric then there is a subgroup $P'$ and a $\theta \in \Iso_{\Ee}(P',P)$ such that $C_T(P') \not \leq P'$. But $\theta$ extends to $\tilde \theta : C_T(P')P' \rightarrow C_T(P)P$, and so $C_T(P) \not \leq P$. Since $C_T(P)P \leq N^T_\chi$, this implies that $N^T_\chi$ is strictly bigger than $P$, and so $\tilde\chi$ can be decomposed by induction. Thus we may assume that $P$ is $\Ee$-centric. If $P$ happens to be $\Ee$-essential, then $\chi$ already has the desired form. Otherwise, $\Ss_p\left(\,\Aut_\Ee(P)/\Aut_{T\cap P}(P)\,\right)$ is connected, and so $\Ss_p(\Aut_\Ee(P))_{>\Aut_{T\cap P}(P)}$ is also connected. Thus there exist pro-$p$ subgroups $U_1,\ldots, U_{m-1}$ and $R_1,\ldots,R_m$ of $\Aut_\Ee(P)$, all  strictly containing $\Aut_{P\cap T}(P)$, such that $U\ge R_1\le U_1\ge R_2\le U_2\ge\ldots\le U_{m-1}\ge R_m\le\,^{\chi}U$. We may suppose that the $U_i$ are Sylow pro-$p$ subgroups of $\Aut_\Ee(P)$. Because $U:= \Aut_T(P)$ is a Sylow $p$-subgroup of $\Aut_\Ee(P)$, there exist $\chi_i\in\Aut_\Ee(P)$ such that $U_i=\,^{\chi_i}U$ for each $1\le i\le m-1$. Thus $U$ and $\,^{\chi}U$ are connected by the path
$$U\ge R_1\le\,^{\chi_1}U\ge R_2\le\ldots\,^{\chi_{m-1}}U\ge R_m\le\,^{\chi}U\,.$$
Setting $\chi_0:=\id$ and $\chi_m:=\chi$, we have $\,^{\chi_{i-1}}R_i\le U\ge\,^{\chi_i}R_i$ for all $1\le i\le m$. Let $\theta_i:=\chi_i\chi_{i-1}^{-1}\in\Aut_\Ee(P)$; then $^{\theta_i}(\,^{\chi_{i-1}}R_i)=\,^{\chi_i}R_i\le U$, and hence  the image of $N^T_{\theta_i}$ in $\Aut_\Ee(P)$ contains $\,^{\chi_{i-1}}R_i$. The latter strictly contains $\Aut_{P \cap T}(P)$, so $N^T_{\theta_i}$ strictly contains~$P$, hence has smaller index. Because $P$ is fully normalized in~$\Ee$, $\theta_i$ extends to $\tilde{\theta_i}\in\Hom_\Ee(N^T_{\theta_i},S)$, which can be expressed as a chain by induction.
 This is true for each $1\le i\le m$, so we obtain a chain for $\chi=\theta_m\ldots\theta_1$, as required.
\end{proof}

The next lemma is a generalization to $T$-subsystems of Theorem~\ref{thmPuig0}. We follow here the proof presented in~\cite[proof of Proposition~5.11]{craven}.

\begin{Lemma}\label{Lemma28Finite}
Let $\Ff$ be a fusion system on a finite $p$-group $S$, let $\Ee$ be a saturated $T$-subsystem of $\Ff$ and let $N$ be a strongly $\Ff$-closed subgroup of $T$. Then for any $\varphi\in\Hom_\Ee(P,Q)$ there exists $\tilde\varphi\in\Hom_\Ee(PN, QN)$ such that $\varphi$ and $\tilde\varphi$ induce the same map from $PN/N$ to $QN/N$.
\end{Lemma}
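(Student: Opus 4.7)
The plan is to adapt Craven's proof of Puig's theorem (Theorem~\ref{thmPuig0}) to the $T$-subsystem setting, by combining the relative Alperin Fusion Theorem (Theorem~\ref{openAFT}) with the $\Ee$-receptivity of fully normalized subgroups given by Lemma~\ref{la:equiv_fnorm2}.

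First, I would factor $\varphi$ as an $\Ee$-isomorphism followed by an inclusion, reducing to the case $\varphi \in \Iso_\Ee(P, \varphi(P))$, since inclusions extend trivially. Because $S$ is finite, every subgroup is open, so Theorem~\ref{openAFT} gives a decomposition $\varphi = \varphi_n \circ \cdots \circ \varphi_1$ in which each $\varphi_i$ is the restriction to $P_{i-1}$ of an automorphism $\hat\varphi_i \in \Aut_\Ee(Q_i)$ of some $\Ee$-essential subgroup $Q_i$ that is fully normalized in $\Ee$. It then suffices to produce, for each $i$, an extension $\tilde{\hat\varphi}_i \in \Aut_\Ee(Q_i N)$ agreeing with $\hat\varphi_i$ modulo $N$ on $Q_i$: strong $\Ff$-closedness of $N$ forces $\tilde{\hat\varphi}_i(N) = N$, so the restriction $\tilde{\hat\varphi}_i|_{P_{i-1} N}$ has image in $P_i N$, and the composition $\tilde\varphi := \tilde{\hat\varphi}_n|_{\cdot} \circ \cdots \circ \tilde{\hat\varphi}_1|_{\cdot}$ is the required morphism from $PN$ to $QN$.

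The essential-case extension splits by the definition of $\Ee$-essential. If $T \le Q$, then $N \le T \le Q$, so $Q N = Q$ and $\tilde{\hat\varphi} = \hat\varphi$ is its own extension. Otherwise $Q$ is $\Ee$-centric, and I would induct on $|N : Q \cap N|$. For the inductive step, since $Q$ is a proper subgroup of the $p$-group $Q N$, we have $N_{Q N}(Q) = Q \cdot N_N(Q) > Q$, hence some $n \in N_N(Q) \setminus Q$ exists. The conjugation $c_n$ lies in $\Aut_T(Q)$, and $\Ee$-receptivity of $Q$ combined with axiom~(c) of a $T$-subsystem should allow us to extend $\hat\varphi$ to an $\Ee$-morphism on $Q \langle n \rangle$. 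Iterating, while passing through fully $\Ee$-normalized representatives of the enlarged domains as in Lemma~\ref{la:equiv_fnorm2}, produces the desired extension to $Q N$.

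The main obstacle lies in guaranteeing that each step of the extension remains within the subcategory $\Ee$ rather than merely within $\Ff$: the receptivity provided by Lemma~\ref{la:equiv_fnorm2} yields $\Ee$-morphisms only on the subgroup $N^T_{\hat\varphi}$, which in general does not contain all of $Q N$ in a single step. The proof must therefore iterate the normalizer-extension process carefully, using $\Ff$-invariance (axiom~(c)) and the $T$-valued axiom~(d) of the $T$-subsystem to transport fully normalized representatives between $\Ee$-isomorphism classes and to recombine the local extensions from the Alperin decomposition into a single coherent $\Ee$-morphism on $PN$.
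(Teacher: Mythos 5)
Your reduction via the relative Alperin theorem (Theorem~\ref{openAFT}) to the case of a single automorphism $\hat\varphi\in\Aut_\Ee(Q)$ of an $\Ee$-essential, fully $\Ee$-normalized subgroup is exactly the paper's first step, and your observation that strong closedness of $N$ lets the local extensions be spliced back together is sound. The gap is in the extension step itself. You propose to pick $n\in N_N(Q)\setminus Q$ and extend $\hat\varphi$ to $Q\langle n\rangle$ using receptivity of $Q$ in $\Ee$ together with axiom~(c). But receptivity only extends $\hat\varphi$ over $N^T_{\hat\varphi}$, and there is no reason for $n$ to lie in $N^T_{\hat\varphi}$: that would require ${}^{\hat\varphi}c_n$ to be realized by conjugation by an element of $N_T(Q)Q$, whereas a priori it is merely some element of $\Aut_\Ee(Q)$. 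Iterating, or passing to fully normalized representatives of the enlarged domains, does not repair this, because the obstruction is attached to $\hat\varphi$ itself; you flag this as ``the main obstacle'' but never remove it.

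The missing idea is that one must not extend $\hat\varphi$ but a congruent automorphism. Set $K:=\Ker\bigl(\Aut_\Ee(Q)\to\Aut(QN/N)\bigr)$ and $A:=\Aut_\Ee(Q)$. Since $\Aut_T(Q)$ is a Sylow $p$-subgroup of $A$ ($Q$ being fully normalized in $\Ee$) and $K\lhd A$, the intersection $\Aut^K_T(Q)$ is a Sylow $p$-subgroup of $K$, and the Frattini argument gives $A=K\,N_A(\Aut^K_T(Q))$. Writing $\hat\varphi=\chi\psi$ with $\chi\in K$ and $\psi\in N_A(\Aut^K_T(Q))$, the automorphisms $\hat\varphi$ and $\psi$ induce the same map on $QN/N$, and now every $c_n$ with $n\in N_N(Q)$ lies in $\Aut^K_T(Q)$ (it is in $K$ because $n\in N$, and in $\Aut_T(Q)$ because $N\le T$), so it is normalized by $\psi$ and hence $N_N(Q)\le N^T_\psi$. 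Receptivity then extends $\psi$ over $N_N(Q)Q$, which is strictly larger than $Q$ unless $N_N(Q)\le Q$; in the latter case $N\le Q$ (finite $p$-groups) and there is nothing to prove, and in the former case one concludes by induction. This is the paper's argument, and without the Frattini decomposition your inductive step does not go through.
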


\begin{proof}
We use induction on $|S:N|$. By the relative version of Alperin's Fusion Theorem, it is enough to prove the result for $\varphi\in\Aut_\Ee(P)$ when $P$ is $\Ee$-essential and fully normalized in~$\Ee$. Let $K:=\Ker(\Aut_\Ee(P)\to\Aut(PN/N))$; $\Aut^K_T(P)$ is thus a Sylow $p$-subgroup of $K$. Let $A:=\Aut_\Ee(P)$;  the Frattini Argument applied to $A$ and $K$ yields $\Aut_\Ee(P)=KN_A(\Aut^K_T(P))$, and so $\varphi=\chi\psi$ with $\chi\in K$ and $\psi\in N_A(\Aut^K_T(P))$. Hence $\varphi$ and $\psi$ induce the same morphism in $\Aut(PN/N)$. Since $P$ is fully normalized in~$\Ee$, $\psi$ extends to~$N^T_\psi$. We are done by induction if $N^T_\psi$ is strictly bigger than $P$. Suppose that $P=N^T_\psi$. Because $\psi\in N_A(\Aut^K_T(P))$, it is clear that $N^T_\psi$ contains $N^K_T(P)$, and hence it contains $N_{N}(P)$, since $N \leq T$. It follows that  $N_{N}(P)$ is a subgroup of $P$; as we are dealing with finite $p$-groups, this implies that $N \leq P$. But in this case the result is clear.
\end{proof}

\begin{Definition}
A $T$-subsystem $\Ee$ of a pro-fusion system $\Ff$ is {\it pro-saturated} if it is the inverse limit of saturated subsystems on finite groups, $\Ee_i \leq \Ff_i$, for some expression $\Ff = \ilim \Ff_i$ (not necessarily all).
\end{Definition}

The next result is a generalization of Lemma~\ref{Lemma28Finite} to pro-fusion systems.

\begin{Lemma}\label{Lemma28Infinite}
Let $\Ff$ be a pro-fusion system on a pro-$p$ group $S$, let $\Ee$ be a pro-saturated $T$-subsystem of $\Ff$ and let $N$ be an open strongly $\Ff$-closed subgroup of $T$. Given any $\varphi\in\Hom_\Ee(P,Q)$ we can find $\tilde\varphi\in\Hom_\Ee(PN,QN)$ such that $\varphi$ and $\tilde\varphi$ induce the same morphism from $PN/N$ to $QN/N$.
\end{Lemma}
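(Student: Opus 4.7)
The proof reduces to the finite case of Lemma~\ref{Lemma28Finite} via the standard compactness argument for inverse limits of non-empty finite sets.

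First I would use pro-saturation of $\Ee$ to fix an expression $\Ff = \ilim_{i \in I} \Ff_i$ together with saturated $T_i$-subsystems $\Ee_i \leq \Ff_i$ (where $T_i := f_i(T)$) such that $\Ee = \ilim_{i \in I}\Ee_i$. By Lemma~\ref{la:angle}, I may replace $\Ff_i$ by $\langle F_i(\Ff)\rangle$; since saturation of $\Ee_i$ is intrinsic to $\Ee_i$, it survives this replacement, while the gain is that every morphism of $\Ff_i$ now lifts to a morphism of $\Ff$. Consequently the image $N^i := f_i(N) \leq T_i$ is strongly $\Ff_i$-closed: any morphism of $\Ff_i$ violating closure of $N^i$ would lift to a morphism of $\Ff$ violating the strong $\Ff$-closure of $N$. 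Since $N$ is open in $S$, the set $I' := \{i \in I \mid N_i \leq N\}$ is cofinal in $I$, and I restrict to $I'$.

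For each $i \in I'$, apply Lemma~\ref{Lemma28Finite} to the saturated $T_i$-subsystem $\Ee_i$, the strongly $\Ff_i$-closed subgroup $N^i \leq T_i$, and the morphism $F_i(\varphi) \in \Hom_{\Ee_i}(f_i(P),f_i(Q))$ to obtain a non-empty finite set
\[
X_i := \{\tilde\psi_i \in \Hom_{\Ee_i}(f_i(PN),f_i(QN)) \mid \tilde\psi_i \text{ and } F_i(\varphi) \text{ induce the same map } f_i(PN)/N^i \to f_i(QN)/N^i\}.
\]
For $j \geq i$ in $I'$, the functor $F_{i,j}$ restricts to a map $X_j \to X_i$, so $\{X_i\}_{i \in I'}$ is an inverse system of non-empty finite sets. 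By compactness, $\ilim_{i \in I'} X_i$ is non-empty, and any element of this limit corresponds, via $\Ee = \ilim_{i \in I'} \Ee_i$, to a morphism $\tilde\varphi \in \Hom_{\Ee}(PN,QN)$. The level-wise congruences $F_i(\tilde\varphi) \equiv F_i(\varphi) \pmod{N^i}$ assemble, via the identification $S/N \cong \ilim_{i \in I'} S_i/N^i$, into the statement that $\tilde\varphi$ and $\varphi$ induce the same homomorphism $PN/N \to QN/N$.

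The main obstacle is the preliminary reduction of the inverse system so that each $N^i$ is strongly $\Ff_i$-closed while each $\Ee_i$ remains a saturated $T_i$-subsystem; once this is in place, the construction is a routine inverse-limit compactness argument directly paralleling the one used in Lemma~\ref{propuig0}.
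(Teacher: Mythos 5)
Your proof is correct and follows essentially the same route as the paper's: apply Lemma~\ref{Lemma28Finite} level-wise to the images of $\varphi$ in the saturated subsystems $\Ee_i$, collect the (finite, non-empty) sets of admissible lifts, and conclude by compactness of an inverse limit of non-empty finite sets. Your preliminary reduction ensuring that $f_i(N)$ is strongly $\Ff_i$-closed is a detail the paper's proof leaves implicit, but it does not change the argument.
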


\begin{proof}
Let $\Ff = \ilim \Ff_i$ and  $\Ee = \ilim \Ee_i$, with $\Ee_i \leq \Ff_i$. The result for some $\Ee_i$ can be obtained by considering the image $\oline\varphi_i:PN_i/N_i\to QN_i/N_i$ of $\varphi$ in $\Ee_i$. By Lemma \ref{Lemma28Finite}, there exists a $\tilde\varphi_i:PNN_i/N_i\to QNN_i/N_i$ that has the same image as $\varphi$ modulo $NN_i/N_i$. Let $X_i$ be the set of all possible such $\tilde\varphi_i$. The $X_i$ are non-empty, and they form an inverse set with respect to the functors $F_{ij}$, so $\ilim_{i\in I} X_i$ is non-empty. Take $\tilde\varphi\in\ilim_{i\in I} X_i\subseteq \Hom_\Ee(PN,QN)$.
\end{proof}

\begin{comment}
\begin{Example}\label{Example29}
Suppose that $\Ff$ comes from a profinite group $G$ with Sylow $p$-subgroup $S$, $M\triangleleft G$ open and $N=M\cap S$. Let $\varphi\in\Iso_\Ff(P,Q)$ be given by conj by an element $g\in G$. We have that $PN$ is a Sylow $p$-subgroup of $PM$ and that $QN$ is a Sylow $p$-subgroup of $QM$. Moreover,  $^g(PN)\le QM$ so there is $mq\in QM$ such that $^{mq}(^g(PN))=QN$.

Set $h=q^{-1}mqg$. We claim that the conjugation by $h$ serves as $\tilde\varphi$. Clearly $^h(PN)=^{q^{-1}}(QN)=QN$. Write $m'=q^{-1}mq$ and suppose $pn\in PN$. Then
$$^h(pn)=^{m'g}(pn)=m'^g(pn){m'}^{-1}=m'm''^g(pn)\,,$$
where $m''=^{^g(pn)}(m'^{-1})\in M$. But $^h(pn)$ and $^g(pn)$ are elements of $S$ so $m'm''\in M\cap S=N$. Thus $\tilde\varphi|_P\simeq\varphi$ modulo $N$.
\end{Example}
\end{comment}

\begin{Lemma}
\label{la:opensystem}
Let $\Ff$ be a pro-fusion system on $S$ and $\Ee$ an open $T$-subsystem of $\Ff$. Then for any $\varphi\in\Hom_\Ee(P,S)$ there exists an open strongly $\Ff$-closed subgroup $N$ of $S$ with the property that if $\theta\in\Hom_\Ff(P,S)$ is such that $\theta \cong \varphi$ modulo $N$ then $\theta\in\Hom_\Ee(P,S)$.
\end{Lemma}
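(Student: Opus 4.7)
The plan is to translate "$\Ee$ open" directly into a statement about the profinite topology on morphism sets, and then read off what the basic open neighborhoods of $\varphi$ look like.

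First I would fix an expression $\Ff = \ilim_{i\in I} \Ff_i$ and recall that by definition
\[
\Hom_\Ff(P,S) \;=\; \ilim_{i\in I}\, \Hom_{\Ff_i}(f_i(P),S_i),
\]
so $\Hom_\Ff(P,S)$ carries a profinite topology in which a basic open neighborhood of any element $\varphi$ has the form
\[
U_i(\varphi) \;=\; \{\theta\in\Hom_\Ff(P,S)\mid F_i(\theta)=F_i(\varphi)\},\qquad i\in I.
\]
Since $\Ee$ is open, $\Hom_\Ee(P,S)$ is an open subset of $\Hom_\Ff(P,S)$ containing $\varphi$, so there exists $i\in I$ with $U_i(\varphi)\subseteq \Hom_\Ee(P,S)$.

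Now take $N := N_i = \Ker(f_i)$. By Definition~\ref{ConstrProFusSt} the $N_i$ are open in $S$, and $N_i$ is strongly $\Ff$-closed since it is the kernel of the morphism of pro-fusion systems $\Ff\to\Ff_i$ (see Paragraph~\ref{lemma4}). The condition "$\theta\cong\varphi$ modulo $N$" means that $\theta$ and $\varphi$ induce the same homomorphism $PN/N\to S/N$; under the canonical identification $PN_i/N_i = f_i(P)$ and $S/N_i = S_i$, this is exactly the condition $F_i(\theta)=F_i(\varphi)$. Hence any such $\theta$ lies in $U_i(\varphi)\subseteq \Hom_\Ee(P,S)$, as required.

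There is essentially no obstacle here beyond pinning down that the "modulo $N$" congruence coincides with equality of images under $F_i$ when $N=N_i$; once that identification is made, openness of $\Ee$ in the profinite topology on $\Hom_\Ff(P,S)$ does all the work. If one wanted $N$ to range over an arbitrary cofinal family of open strongly $\Ff$-closed subgroups, one could alternatively appeal to Proposition~\ref{prop:fbar} to re-express $\Ff$ as $\ilim_{N\osc}\langle\oline\Ff_N\rangle$ and apply the same argument; the resulting $N$ can then be chosen from the family at hand.
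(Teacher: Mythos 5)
Your proof is correct and is essentially the paper's own argument: the paper likewise uses openness of $\Hom_\Ee(P,S)$ in $\Hom_\Ff(P,S)$ to find an $i$ with $(F_i|_{\Hom_\Ff(P,S)})^{-1}F_i(\varphi)\subseteq\Hom_\Ee(P,S)$ and then sets $N=N_i$. Your extra step identifying ``$\theta\cong\varphi$ modulo $N_i$'' with $F_i(\theta)=F_i(\varphi)$ is a detail the paper leaves implicit, and it is handled correctly.
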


\begin{proof}
Given that $\Hom _\Ee (P,S)$ is an open subset of $\Hom _\Ff (P,S)$, there is an $i\in I$ such that $(F_i | _{\Hom _\Ff (P,S)})^{-1} F_i(\varphi) \subseteq \Hom _{\Ee}(P,S)$. Let $N=N_i$.
\end{proof}

\npar\label{CountableSubsystems} We say that $\Ff$ has a \emph{countable convergent sequence of open pro-saturated subsystems} if there exist open subsystems $\Ff = \Ee^1 \geq \Ee^2 \geq \cdots $ such that $\Ee^i$ is a pro-saturated $T^i$-subsystem and $S = \ilim _{i \in \mathbb N} S/T^i$. By the last condition in Definition \ref{subsystem} it follows that $\cap_i \Ee^i$ is the set of inclusions between the subgroups of $S$.

\begin{Example} If $\Ff= \Ff_S(G)$ is the fusion system associated to a countably based profinite group $G$ and $\{ H_i \} _{i \in \mathbb N}$ is a sequence of open subgroups such that $G=\ilim_{i\in \mathbb N} G/H_i$,  then the $\Ee_i :=\Ee_S(H_i)$  have the above property.
\end{Example}

\begin{Lemma}\label{Lemma27}
Let $\Ff$ be a saturated fusion system on $S$ with a countable convergent sequence of open pro-saturated $T^i$-subsystems $\Ee^i$. Then, for any open strongly $\Ff$-closed subgroup $N$ of $S$ and any $P \leq S$, there exists an $i$ such that if $\theta\in\Hom_{\Ee^i}(P,S)$ then $\theta  \simeq\id _P$ modulo $N$.
\end{Lemma}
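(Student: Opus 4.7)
My plan is to derive the conclusion directly from condition~(d) of the $T$-subsystem definition (Definition~\ref{subsystem}), after observing that the subgroups $T^i$ form a basis of open neighbourhoods of $1$ in $S$.

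The first step is to note that the sequence $T^1 \geq T^2 \geq \cdots$ is decreasing: the existence of the inverse system $S \cong \ilim_i S/T^i$ from \ref{CountableSubsystems} presupposes transition maps $S/T^{i+1}\to S/T^i$, so $T^{i+1} \leq T^i$. Since each $T^i$ is open in $S$ and the natural map $S \to \ilim_i S/T^i$ is a homeomorphism, the collection $\{T^i\}_{i \in \mathbb N}$ is a fundamental system of open neighbourhoods of $1$ in $S$; in particular, the open subgroup $N$ must contain some $T^i$.

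The second step is immediate. For such an $i$, condition~(d) of Definition~\ref{subsystem} applied to any $\theta \in \Hom_{\Ee^i}(P,S)$ yields $\theta(u)u^{-1} \in T^i \leq N$ for every $u \in P$. That is precisely the statement that $\theta$ and $\id_P$ induce the same homomorphism $PN/N \to S/N$, i.e.\ $\theta \simeq \id_P$ modulo $N$.

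There is essentially no obstacle: the lemma is little more than a translation of the convergence hypothesis $S \cong \ilim_i S/T^i$ through condition~(d) of the subsystem axioms. Notably, the argument uses neither the pro-saturation of the $\Ee^i$ nor the preceding Lemma~\ref{Lemma28Infinite}.
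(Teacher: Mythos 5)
Your proof is correct, but it takes a more direct route than the paper's. The paper argues via compactness of morphism sets: it considers the images of the decreasing family $\Hom_{\Ee^i}(P,S)$ in the \emph{finite} set $\Hom_{\Ff/N}(PN/N,S/N)$ and, using the observation from \ref{CountableSubsystems} that $\bigcap_{i\ge 1}\Hom_{\Ee^i}(P,S)=\{\inc_P\}$, concludes by a compactness argument that these images must stabilise at $\{\inc_{PN/N}\}$ for large $i$. You instead work directly with the subgroups $T^i$: since $S\cong\ilim_i S/T^i$ forces the $T^i$ to be a nested neighbourhood basis of $1$ with trivial intersection, compactness of $S\setminus N$ gives $T^i\le N$ for some $i$, and then axiom (d) of Definition~\ref{subsystem} immediately yields $\theta(u)u^{-1}\in T^i\le N$ for every $\theta\in\Hom_{\Ee^i}(P,S)$. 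Both arguments ultimately rest on axiom (d) — it is also what underlies the paper's identity $\bigcap_i\Hom_{\Ee^i}(P,S)=\{\inc_P\}$ — but yours isolates the mechanism more transparently and buys a slight strengthening: the index $i$ you produce depends only on $N$ and is uniform in $P$, whereas the paper's compactness argument a priori produces an $i$ depending on both $N$ and $P$. The one point worth making explicit (as you do) is that the nesting $T^{i+1}\le T^i$ is implicit in reading $S=\ilim_i S/T^i$ as the assertion that the canonical map is an isomorphism; without nesting, compactness would only give a finite intersection of the $T^i$ inside $N$, which would not suffice.
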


\begin{proof}
Consider the image of $\Hom_{\Ee^i}(P,S)$ in $\Hom_{\Ff/N}(PN/N, S/N)$. Since $\bigcap_{i\ge 1}\Hom_{\Ee^i}(P,S)=\{\inc_P\}$ and $\Hom_{\Ff/N}(PN/N,S/N)$ is finite, we must have, by compactness, that the image of $\Hom_{\Ee^i}(P,S)$ is $\{\inc_{PN/N}\}$ for large enough $i$.
\end{proof}

\begin{Theorem}\label{ProAFTgene}
Let $\Ff$ be a pro-fusion system on a pro-$p$ group $S$ that has a countable convergent sequence of open pro-saturated $T^i$-subsystems $\Ee^i$ and let $P$, $P'$ be two closed subgroups of $S$. Then for each $\varphi\in\Iso_\Ff(P,P')$ there exist subgroups $P=P_0,P_1,P_2,\dots$ and open subgroups $Q_1,Q_2,Q_3,\dots$ of $S$ and morphisms $\varphi_i\in\Aut_\Ff(Q_i),\,i\ge 1$ such that:
\begin{enumerate}[(a)]
\item for each $j$ there exists an $n_j$ such that for any $i \geq n_j$, $\varphi_i\in\Aut_{\Ee^j}(Q_i)$ and $Q_i$ is $\Ee^j$-essential and fully normalized in~$\Ee^j$;
\item $P_{i-1},\,P_i\le Q_i$, $\varphi_i(P_{i-1})=P_i$;
\item $\varphi (u) = \dots\varphi_i\varphi_{i-1}\dots\varphi_2\varphi_1 (u)$for all $u \in P$ and the infinite composition converges.
\end{enumerate}
\end{Theorem}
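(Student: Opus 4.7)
The plan is to construct the chain inductively over stages $s=1,2,\ldots$, with the block appended at stage~$s$ consisting of morphisms in $\Aut_{\Ee^s}$. At the end of stage~$s$ I will maintain two invariants:
\begin{enumerate}[(i)]
\item the partial composition $\Psi_s := \varphi_{m_s}\cdots\varphi_1$ satisfies $\Psi_s(u)\equiv\varphi(u)\pmod{T^{k_s}}$ for all $u\in P$, where $k_1<k_2<\cdots$ is a strictly increasing sequence of indices to be chosen;
\item the residual morphism $\mu_s := \varphi\circ\Psi_s^{-1}:P_{m_s}\to P'$ lies in $\Ee^{s+1}$.
\end{enumerate}
Set $m_0=0$, $\Psi_0=\id_P$, $\mu_0=\varphi$; then~(i) is vacuous at $s=0$ and~(ii) holds because $\Ee^1=\Ff$.

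To perform stage~$s$: invariant~(ii) from the previous stage gives $\mu_{s-1}\in\Ee^s$, and $\Ee^s$ is pro-saturated, hence saturated by (the $T$-subsystem analogue of) Theorem~\ref{prosatsat}. Apply Lemma~\ref{Lemma28Infinite} inside $\Ee^s$ to extend $\mu_{s-1}$ to an open approximation $\tilde\mu_{s-1}\in\Hom_{\Ee^s}(P_{m_{s-1}}T^{k_s},P'T^{k_s})$ agreeing with $\mu_{s-1}$ modulo~$T^{k_s}$, then apply Theorem~\ref{openAFT} to $\Ee^s$ and $\tilde\mu_{s-1}$ to decompose it into a finite Alperin chain $\varphi_{m_{s-1}+1},\ldots,\varphi_{m_s}\in\Aut_{\Ee^s}(Q_i)$ with each $Q_i$ open, $\Ee^s$-essential, and fully $\Ee^s$-normalized. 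Restricting each step to the current image of $P$ yields the next batch of subgroups $P_i$, and condition~(b) holds by this construction. A direct computation confirms $\Psi_s\equiv\varphi\pmod{T^{k_s}}$ and $\mu_s\equiv\id\pmod{T^{k_s}}$ on $P_{m_s}$, i.e.~invariant~(i).

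The key parameter choice, restoring invariant~(ii), comes from Lemma~\ref{la:opensystem} applied to $\Ee^{s+1}$ and the inclusion $\id_{P_{m_s}}$: there is an open strongly $\Ff$-closed subgroup $N\leq S$ such that any morphism out of $P_{m_s}$ agreeing with the identity modulo~$N$ already lies in $\Ee^{s+1}$, so it suffices to take $k_s>k_{s-1}$ large enough that $T^{k_s}\leq N$, which is possible because the $T^i$ form a fundamental system of open neighborhoods of~$1$. The main obstacle is the circular dependence between $k_s$, the subgroup $P_{m_s}$ produced by the stage, and the threshold~$N$; I would resolve this by a two-pass selection at each stage—commit to a tentative $k_s$, run the construction to determine $P_{m_s}$ and the resulting~$N$, and if $T^{k_s}\not\leq N$ redo the Lemma~\ref{Lemma28Infinite} extension and Theorem~\ref{openAFT} decomposition with a sufficiently larger $k'_s$ meeting the condition (equivalently, by a compactness argument over the finitely many possible stage outcomes modulo each~$T^j$).

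Once the parameters $k_s$ are chosen, $\bigcap_s T^{k_s}=\{1\}$ forces $\Psi_s(u)\to\varphi(u)$ for every $u\in P$, which is condition~(c). For condition~(a) set $n_j:=m_{j-1}+1$: any $\varphi_i$ with $i\geq n_j$ was produced at some stage $s\geq j$, so $\varphi_i\in\Aut_{\Ee^s}(Q_i)\subseteq\Aut_{\Ee^j}(Q_i)$ using $\Ee^s\subseteq\Ee^j$. The remaining technicality—that $Q_i$ be $\Ee^j$-essential and fully $\Ee^j$-normalized, not merely at the finer level $\Ee^s$—is verified by comparing the essentiality posets $S_p(\Aut_{\Ee^s}(Q_i)/\Aut_{T^s\cap Q_i}(Q_i))$ and $S_p(\Aut_{\Ee^j}(Q_i)/\Aut_{T^j\cap Q_i}(Q_i))$ through the inclusion of subsystems.
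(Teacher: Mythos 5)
Your overall strategy is the same as the paper's: iterate over the subsystems $\Ee^s$, use Lemma~\ref{Lemma28Infinite} to extend the current residual morphism to an open subgroup, decompose that extension by Theorem~\ref{openAFT} inside $\Ee^s$, and use Lemma~\ref{la:opensystem} to push the new residual into $\Ee^{s+1}$. However, there are two genuine gaps, both stemming from the same oversight: you anchor your control at the moving subgroup $P_{m_s}$ instead of at the fixed target $P'$. First, the circular dependence you flag as ``the main obstacle'' is real, and your proposed resolution does not work as stated: redoing the stage with a larger $k_s'$ changes the extension produced by Lemma~\ref{Lemma28Infinite}, hence changes $P_{m_s}$, hence changes the threshold $N$ supplied by Lemma~\ref{la:opensystem}, with no termination guarantee; and the ``compactness over the finitely many stage outcomes modulo $T^j$'' is not meaningful, since the stage outcome involves non-canonical choices that are not determined modulo $T^j$. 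The paper's fix is simple and worth internalizing: apply Lemma~\ref{la:opensystem} to the inclusion $\inc_{P'}$ of the \emph{fixed} subgroup $P'$, choose $N$ (hence $k_s$) \emph{before} running the stage, and then observe that the inverse residual $\mu_s^{-1}\colon P'\to P_{m_s}$ is congruent to $\inc_{P'}$ modulo $N$, hence lies in $\Ee^{s+1}$, hence so does $\mu_s$ by the factorization axiom. No iteration is needed.

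Second, your convergence argument for condition~(c) only controls the subsequence of partial compositions at stage boundaries: $\Psi_s(u)\to\varphi(u)$ because $\bigcap_s T^{k_s}=1$. But condition~(c) requires convergence of $\varphi_i\cdots\varphi_1(u)$ for \emph{all} $i$, and the intermediate partial compositions within a stage are not a priori close to $\varphi$. The paper handles this with Lemma~\ref{Lemma27} (which you never invoke): for $i$ in stage $s>j$ one rewrites $\varphi_i\cdots\varphi_1=(\varphi_i\cdots\varphi_{n_j+1}\theta_j^{-1})\varphi$, where the bracketed morphism lies in $\Ee^j$ and has the fixed domain $P'$, so Lemma~\ref{Lemma27} forces it to be the identity modulo any prescribed open strongly closed $N$ once $j$ is large. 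Again the argument must be anchored at $P'$; a version anchored at the varying $P_{m_s}$ cannot use Lemma~\ref{Lemma27}, which is stated for a fixed domain. Your treatment of condition~(a) (essentiality and full normalization passing from $\Ee^s$ to the coarser $\Ee^j$) is asserted rather than proved, but the paper is equally terse on this point, so I do not count it against you.
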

\begin{proof}
We construct the Alperin chain by successive approximation. Recall that $\Ee^1=\Ff$, and start with $R_0=P$ and $\theta_0=\varphi$. Given $\theta_i\in\Iso_{\Ee^i}(R_i,P')$, we show that there is a morphism $\psi_{i+1}\in\Iso_{\Ee^i}(R_i,R_{i+1})$, given by an Alperin chain in $\Ee^i$ as in the theorem, and a morphism $\theta_{i+1}\in\Iso_{\Ee^{i+1}}(R_{i+1},P')$ such that $\theta_i=\theta_{i+1}\psi_{i+1}$. Thus $\varphi = \theta_i \psi_i \cdots \psi_1$. We consider the infinite chain $\cdots\psi_3\psi_2\psi_1$ and replace each $\psi_i$ by its Alperin chain to obtain the Alperin chain required.

The first two conditions are satisfied, with $n_j$ set equal to the sum of the lengths of the chains for $\psi_i, \ldots , \psi_1$. Given an open strongly $\Ff$-closed subgroup $N$ of $S$, choose $j$ as in Lemma~\ref{Lemma27}. If $i \geq j$ then $\varphi _i \cdots \varphi_1= \varphi _i \cdots \varphi_{N_j+1} \psi_j \cdots \psi_1=\varphi _i \cdots \varphi_{N_j+1} \theta_j^{-1} \varphi$. But $\varphi _i \cdots \varphi_{N_j+1} \theta_j^{-1} \in \Ee_j$, and so $\varphi _i \cdots \varphi_{N_j+1} \theta_j^{-1} \simeq \id_{P'}$ modulo $N$. Thus $\varphi \simeq \varphi _i \cdots \varphi_1$ modulo $N$, which shows that the infinite composition converges.

Now for the construction.
Let $\inc_{P'}$ denote the inclusion of $P'$ in $S$; Lemma~\ref{la:opensystem} allows us to choose an open strongly $\Ff$-closed subgroup $N$ of $S$ such that if $\chi \simeq \inc_{P'}$ modulo $N$ then $\chi\in\Hom_{\Ee^{i+1}}(P,S)$. Replacing $N$ by $N \cap T^{i+1}$ if necessary, we may assume that $N \leq T^{i+1}$.
Given $\theta_i\in\Hom_{\Ee^i}(R_i,P')$,
apply Lemma~\ref{Lemma28Infinite} to $\theta_i$ to obtain a $\tilde\theta_i\in\Hom_{\Ee^i}(R_iN,P'N)$ such that $\theta_i$ and $\tilde\theta_i$ are equal modulo $N$. Set $\psi _{i+1}:= \tilde\theta_i|_{R_i}$, $R_{i+1}:=\psi_{i+1}(R_i)$ and $\theta_{i+1}:=\theta_i \psi_{i+1}^{-1}$. Since $R_iN$ is an open subgroup of $S$, we can apply Theorem~\ref{openAFT} relative to $\Ee^i$ to obtain an Alperin chain in $\Ee_i$ for $\tilde\theta_i$ and thus also for $\psi_{i+1}$. Moreover, $\theta_{i+1}^{-1}=\inc_{P'}$ modulo $N$; hence $\theta_{i+1}^{-1}\in\Iso_{\Ee^{i+1}}(P',R_{i+1})$, and so $\theta_{i+1}\in\Iso_{\Ee^{i+1}}(R_{i+1},P')$
\end{proof}

\begin{Lemma}
\label{la:finess}
Suppose that for some $\varphi \in \Iso _\Ff (P,P')$ there is a finite Alperin chain in $\Ff$ as in the conclusion of the preceding theorem, although it is not required to satisfy condition (a). Then there is such a chain with all of the $Q_i$  $\Ff$-essential and fully normalized in $\Ff$.
\end{Lemma}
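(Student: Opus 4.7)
The plan is to refine each step of the given chain by applying Theorem~\ref{openAFT} to $\Ff$ itself, viewed as a saturated $S$-subsystem of $\Ff$ (this requires $\Ff$ to be saturated, which is the tacit hypothesis that makes ``$\Ff$-essential and fully normalized in $\Ff$'' meaningful in the conclusion). Write the given finite chain as $\varphi = \varphi_n \cdots \varphi_1$, with $\varphi_i \in \Aut_\Ff(Q_i)$, each $Q_i$ open in $S$, $P_{i-1}, P_i \leq Q_i$ and $\varphi_i(P_{i-1}) = P_i$.

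For each $i$, I would regard $\varphi_i$ as an element of $\Iso_\Ff(Q_i, Q_i)$ between two copies of the open subgroup $Q_i$ and apply Theorem~\ref{openAFT}. This yields an Alperin chain $\varphi_i = \psi_i^{(m_i)} \cdots \psi_i^{(1)}$, where each $\psi_i^{(j)} \in \Aut_\Ff(R_i^{(j)})$ for an open subgroup $R_i^{(j)}$ of $S$ that is $\Ff$-essential and fully normalized in $\Ff$, together with open intermediate subgroups $Q_i = Q_i^{(0)}, Q_i^{(1)}, \ldots, Q_i^{(m_i)} = Q_i$ satisfying $Q_i^{(j-1)}, Q_i^{(j)} \leq R_i^{(j)}$ and $\psi_i^{(j)}(Q_i^{(j-1)}) = Q_i^{(j)}$.

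Then I would concatenate these refinements over $i = 1, \ldots, n$ to obtain the desired chain for $\varphi$: the new enclosing subgroups are the $R_i^{(j)}$ listed in lexicographic order of $(i,j)$, all $\Ff$-essential and fully normalized in $\Ff$, and the new intermediate subgroups are the successive images of $P$ under the composed $\psi$'s. The required containment of each intermediate image in the next enclosing $R$ follows from $\psi_i^{(j)} \cdots \psi_i^{(1)}(P_{i-1}) \leq \psi_i^{(j)} \cdots \psi_i^{(1)}(Q_i) = Q_i^{(j)} \leq R_i^{(j+1)}$ when $j < m_i$, together with $P_i \leq Q_{i+1} = Q_{i+1}^{(0)} \leq R_{i+1}^{(1)}$ at the transition between blocks; and the equality $\varphi = \psi_n^{(m_n)} \cdots \psi_1^{(1)}$ on $P$ is immediate from $\varphi_i = \psi_i^{(m_i)} \cdots \psi_i^{(1)}$ on all of $Q_i \supseteq P_{i-1}$. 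The only genuine hurdle is checking that Theorem~\ref{openAFT} may legitimately be invoked with $\Ee = \Ff$ and $T = S$; beyond that, the lemma is a straightforward bookkeeping exercise on a finite chain, requiring no convergence or inverse-limit machinery.
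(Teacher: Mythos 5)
Your proposal is correct and follows exactly the paper's argument: apply Theorem~\ref{openAFT} to each automorphism $\varphi_i\in\Aut_\Ff(Q_i)$ of the open subgroup $Q_i$ to decompose it into restrictions of automorphisms of $\Ff$-essential, fully normalized subgroups, then splice the resulting finite chains together. The paper states this in two sentences; your version merely makes the bookkeeping explicit, and your worry about invoking Theorem~\ref{openAFT} with $\Ee=\Ff$, $T=S$ is resolved by the standing hypothesis of Theorem~\ref{ProAFTgene} that $\Ee^1=\Ff$ is itself a pro-saturated (hence, being countably based, saturated) $S$-subsystem.
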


\begin{proof}
 Since the $Q_i$ are open subgroups, we can apply Theorem~\ref{openAFT} to each $\varphi_i\in\Aut_\Ff(Q_i)$ in order to decompose it as a finite composition of restrictions of automorphisms of $\Ff$-essential $\Ff$-normalized subgroups. Now splice these chains together.
\end{proof}

\begin{Remark}
The proof of the preceding lemma will not work if the original chain is infinite, because of problems with convergence.
\end{Remark}

%%%%%%%%%%%%%%%%%%%%%%%%%%%%%%%%%%%%%%%%%%%
\section{The length of an Alperin chain}%%%
%%%%%%%%%%%%%%%%%%%%%%%%%%%%%%%%%%%%%%%%%%%
It really is necessary to allow the possibility of an infinite chain in the statement of Theorem~\ref{ProAFTgene}, as we will now show with an example.

Let $G$ be a finite group with Sylow $p$-subgroup $S$. Let $P,P' \leq S$ be two subgroups that are conjugate by an element $g \in G$ but are not conjugate by any element of $N_G(S)$. For example, if $G = \Gl _3 (\mathbb F_p)$ and $S$ consists of matrices of the form $ \left( \begin{smallmatrix} 1&*&*\\ 0&1&* \\ 0&0&1 \end{smallmatrix} \right)$, $P$ those of the form $\left( \begin{smallmatrix} 1&*&0\\ 0&1&0 \\ 0&0&1 \end{smallmatrix} \right)$, $P'$ those of the form $\left( \begin{smallmatrix} 1&0&0\\ 0&1&* \\ 0&0&1 \end{smallmatrix} \right)$ and $g=\left( \begin{smallmatrix} 0&0&1\\ 1&0&0 \\ 0&1&0 \end{smallmatrix} \right)$.

Now consider the cartesian product of infinitely many copies of $G$, indexed by some set $I$. We will denote this by $\prod G_i$. The subgroups $\prod P_i$ and $\prod P'_i$ are conjugate by $\prod g_i$.

An open subgroup of $\prod S_i$ is of the form $X = Y \times \prod _{i \not \in J} S_i$, for some finite set $J \subset I$. Thus an automorphism of $X$ will have as $i$-coordinate an element of $N_G(S_i)$ for all but finitely many $i$. The same will be true of a composition of finitely many such automorphisms. In particular, such a composition cannot be equal to $\prod g_i$.

The rest of this section contains various facts about the length of an Alperin chain.

Let $\Ff$ be a pro-fusion system on a pro-$p$ group $S$. Let $P, P' \leq S$ and $\varphi \in \Iso _{\Ff}(P,P')$. By an Alperin chain in $\Ff$ for $\varphi$ we mean a sequence of subgroups of $S$, $P=P_0, P_1, \ldots ,P_n=P'$, a sequence of open subgroups $Q_1, \ldots , Q_n$ and morphisms $\varphi_i \in \Aut_{\Ff}(Q_i)$ such that $P_{i-1},P_i\leq Q_i$, $\varphi_i(P_{i-1})=P_i$ and $\varphi = \varphi _n \cdots \varphi_1$. By a chain of essential subgroups we mean such a chain with the extra condition that all the $Q_i$ are $\Ff$-essential.

The length of the chain is defined to be the number of subgroups $Q_i$ that are \emph{not} equal to $S$. It is known that a chain can be rearranged so that all the $Q_i$ that are equal to $S$ appear at the end and all their automorphisms can thus be merged into one (see, for example,~\cite[2.8]{stancu}). Thus not counting the $Q_i$ equal to $S$ will only change the length of a shortest chain by at most one.

\begin{Definition}
The Alperin length of $\varphi$ is the minimum length of a chain for $\varphi$ as defined above; we denote it by $\Alp _{\Ff} ( \varphi )$. If no such (finite) chain exists, we write $\Alp _{\Ff} ( \varphi ) = \infty$. If we require a chain of essential subgroups we write $\Alp ^{\ess}_{\Ff} (\varphi)$. If we relax the condition that the $Q_i$ be open then we write $\Alp^{\clo}_{\Ff}(\varphi)$.
\end{Definition}

\begin{Lemma}\label{infchain}
We have $\Alp ^{\ess}_{\Ff} (\varphi) \geq \Alp _\Ff (\varphi) \geq \Alp _{\Ff}^{\clo} ( \varphi )$ and $\Alp ^{\ess}_{\Ff} (\varphi) \geq  \Alp^{\clo,\ess}_\Ff(\varphi) \geq \Alp _{\Ff}^{\clo} ( \varphi )$.
If $\Alp _{\Ff} (\varphi) < \infty$ then $\Alp ^{\ess}_{\Ff} (\varphi) < \infty$.
\end{Lemma}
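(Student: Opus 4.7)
The plan is to observe that all four quantities are minima of chain lengths over different classes of admissible chains, and that the classes nest in a way that gives the stated inequalities more or less automatically; the only real content is the last assertion about finiteness, which will follow from Lemma~\ref{la:finess}.

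First I would record the following inclusions of chain classes. Write $\Cc$, $\Cc^{\ess}$, $\Cc^{\clo}$, $\Cc^{\clo,\ess}$ respectively for the sets of Alperin chains for $\varphi$ with $Q_i$ open, open and $\Ff$-essential, closed, closed and $\Ff$-essential. Then $\Cc^{\ess}\subseteq\Cc$ (adding the essentiality requirement only restricts the class), $\Cc\subseteq\Cc^{\clo}$ (dropping openness only enlarges the class), and similarly $\Cc^{\ess}\subseteq\Cc^{\clo,\ess}$ and $\Cc^{\clo,\ess}\subseteq\Cc^{\clo}$. Since each $\Alp$-quantity is the infimum of the length over the corresponding set of chains, the containments of sets translate into the reverse inequalities of the infima, yielding $\Alp^{\ess}_\Ff(\varphi)\geq\Alp_\Ff(\varphi)\geq\Alp^{\clo}_\Ff(\varphi)$ and $\Alp^{\ess}_\Ff(\varphi)\geq\Alp^{\clo,\ess}_\Ff(\varphi)\geq\Alp^{\clo}_\Ff(\varphi)$.

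For the finiteness implication, suppose $\Alp_\Ff(\varphi)<\infty$, so that there is a finite Alperin chain for $\varphi$ with each $Q_i$ an open subgroup of $S$. This is precisely the hypothesis of Lemma~\ref{la:finess}, which produces a (necessarily finite, since it is obtained by splicing finitely many finite chains coming from Theorem~\ref{openAFT}) Alperin chain for $\varphi$ in which every $Q_i$ is both open, $\Ff$-essential and fully normalized in~$\Ff$. In particular this is a chain in $\Cc^{\ess}$, so $\Alp^{\ess}_\Ff(\varphi)<\infty$.

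There is no real obstacle here: the only subtle point is to check that the minima are well-defined in the ``no finite chain exists'' case, where we adopt the convention $\inf\emptyset=\infty$, which is consistent with all four inequalities. All the work for the nontrivial step has already been done in Lemma~\ref{la:finess}, which in turn relies on Theorem~\ref{openAFT} applied individually to each open $\varphi_i\in\Aut_\Ff(Q_i)$.
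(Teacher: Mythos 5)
Your proof is correct and follows the same route as the paper: the paper likewise dismisses the inequalities as immediate from the definitions (your containment-of-chain-classes argument is just the explicit version of this) and cites Lemma~\ref{la:finess} for the finiteness implication. Nothing is missing.
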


\begin{proof}
The first statement is clear from the definitions and the second is the content of Lemma~\ref{la:finess}.
\end{proof}

For simplicity we will mostly, in what follows, deal only with pro-fusion systems associated to profinite groups. In this case a fixed choice of Sylow pro-$p$ subgroup $S$ is implicit and we write $\Alp _G (\varphi)$ for $\Alp _{\Ff_S(G)}(\varphi)$.

\begin{Lemma}\label{lengthproduct}
If we have a collection of profinite groups $\{ G_i | i \in I \}$, possibly infinite, with Sylow pro-$p$ subgroups $S_i$, pro-$p$ subgroups  $P_i,P'_i \leq S_i$ and morphisms $\varphi_i \in \Iso _{G_i}(P_i,P_i')$ and form the cartesian product, then $\sum _i \Alp _{G_i} (\varphi_i) \geq \Alp_{\prod G_i}(\prod \varphi _i) \geq \sup \{ \Alp _{G_i}(\varphi _i)\}$ and $\Alp^{\clo}_{\prod G_i}(\prod \varphi _i) = \sup_i \{ \Alp _{G_i}(\varphi _i)\}$.
\end{Lemma}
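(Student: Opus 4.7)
The plan is to handle the three comparisons through two complementary constructions: lifting Alperin chains from individual factors into the product, and projecting chains in the product back to each factor.

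For $\sum_i \Alp_{G_i}(\varphi_i) \ge \Alp_{\prod G_i}(\prod \varphi_i)$, if the sum is infinite there is nothing to prove, so assume it is finite; then only finitely many indices $i$ have $\Alp_{G_i}(\varphi_i) > 0$. For each such $i$ I fix an Alperin chain for $\varphi_i$ in $G_i$ with open witnesses $Q^{(k)}_i \le S_i$ and automorphisms $\varphi^{(k)}_i = c_{g^{(k)}_i}$, and lift each step to $\prod G_j$ by taking $Q^{(k)} := Q^{(k)}_i \times \prod_{j \ne i} S_j$ (which is open in $\prod S_j$), together with conjugation by the element of $\prod G_j$ whose $i$-th coordinate is $g^{(k)}_i$ and all others are $1$. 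Concatenating these lifts over the contributing indices accounts for the contributing coordinates; the remaining indices, where $\varphi_i = c_{g_i}$ for some $g_i \in N_{G_i}(S_i)$, are handled by one further step with $Q = \prod S_j$, which does not count towards the length.

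For $\Alp_{\prod G_i}(\prod \varphi_i) \ge \sup_i \Alp_{G_i}(\varphi_i)$, I project any chain $(P^{(k)}, Q^{(k)}, \varphi^{(k)})_{k=1}^n$ for $\prod \varphi_i$ in $\prod G_j$ to each $G_i$. Each $\varphi^{(k)}$ is conjugation by some $(g^{(k)}_j)_j$, so it restricts on the $i$-th factor to $c_{g^{(k)}_i} \in \Aut_{G_i}(p_i(Q^{(k)}))$, where $p_i$ denotes the $i$-th projection. Because $p_i$ is an open map, each $p_i(Q^{(k)})$ is open in $S_i$, so the projected sequence is an Alperin chain for $\varphi_i$ of length at most $n$ (some projections may equal $S_i$ and hence not count).

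For the closed equality, the $\le$ direction runs all factors in parallel (and is vacuous unless $n := \sup_i \Alp_{G_i}(\varphi_i)$ is finite). Fix open chains for each $\varphi_i$ of length $\Alp_{G_i}(\varphi_i)$, pad each to the common length $n$ by trivial steps, and form the coordinate-wise products $Q^{(k)} := \prod_i Q^{(k)}_i$ and $\varphi^{(k)} := \prod_i \varphi^{(k)}_i$. Each $Q^{(k)}$ is closed (as a product of closed subgroups) but generally not open in $\prod S_j$, so this is a valid $\clo$-chain of length exactly $n$. The $\ge$ direction again uses projection, but a closed $Q^{(k)}$ only yields a closed $p_i(Q^{(k)})$ in each $S_i$; the main obstacle is that a priori this gives only $\Alp^{\clo}_{G_i}(\varphi_i) \le \Alp^{\clo}_{\prod G_i}(\prod \varphi_i)$ rather than a bound in terms of $\Alp_{G_i}(\varphi_i)$. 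This gap vanishes in the setting of the preceding counterexample, where each $S_i$ is a finite $p$-group, so every closed subgroup is open and $\Alp_{G_i} = \Alp^{\clo}_{G_i}$; the two suprema then agree and equality follows.
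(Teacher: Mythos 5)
Your constructions are the same as the paper's: projection onto each factor for the lower bounds, the lift $Q_i^{(k)}\times\prod_{j\ne i}S_j$ together with one final automorphism of $\prod_j S_j$ (which does not count towards the length) for $\sum_i\Alp_{G_i}(\varphi_i)\ge\Alp_{\prod G_i}(\prod\varphi_i)$, and coordinatewise padding and multiplication for the upper bound in the closed case. The only real divergence is the direction $\Alp^{\clo}_{\prod G_i}(\prod\varphi_i)\ge\sup_i\Alp_{G_i}(\varphi_i)$. You are right that projecting a chain of closed subgroups only yields $\Alp^{\clo}_{\prod G_i}(\prod\varphi_i)\ge\sup_i\Alp^{\clo}_{G_i}(\varphi_i)$, which is weaker than the printed statement unless $\Alp_{G_i}(\varphi_i)=\Alp^{\clo}_{G_i}(\varphi_i)$; but your repair (assuming each $G_i$ finite) proves strictly less than the lemma as stated. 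The paper instead reads the closed equality with the closed length on both sides: in the closed case its proof takes minimal \emph{closed} chains in each factor and sets $N$ equal to the ``closed version'' of the supremum, so that the padded product and projection together give $\Alp^{\clo}_{\prod G_i}(\prod\varphi_i)=\sup_i\Alp^{\clo}_{G_i}(\varphi_i)$ for arbitrary profinite $G_i$; this is the form needed in the later applications, where the $G_i$ are finite and the two lengths coincide. So what you have found is a genuine imprecision in the statement rather than a gap you failed to close: the clean fix is to replace $\Alp_{G_i}$ by $\Alp^{\clo}_{G_i}$ on the right-hand side of the closed equality (your inequality $\Alp^{\clo}_{\prod G_i}(\prod\varphi_i)\le\sup_i\Alp_{G_i}(\varphi_i)$, with open data on the right, remains valid but is not the sharp half), and no finiteness hypothesis is then required.
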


\begin{proof}
An Alperin chain in $\prod G_i$ for $\prod \varphi _i$ can be projected to a chain in any $G_i$ for $\varphi_i$; this gives the inequality with $\sup$ in either case. For the other inequality, we may assume that $\sup_i \{ \Alp _{G_i}(\varphi _i)\}$ (or its closed version) is finite, equal to $N$, say.
For each $i$, let $\varphi _i = \varphi _i ^{n_i} \cdots \varphi _i ^1$ be an Alperin chain of minimal length, with $\varphi_i^j \in \Aut (Q_i^j)$, and with the $Q_i^j$ open subgroups if that is the case we are treating.

 In the closed case, extend each chain to have length $N$ by adding copies of the identity morphism on $S_j$. Then the $\prod_i \varphi_i^j \in \Aut_{\prod_i G_i}(\prod_i Q_i^j)$ form an Alperin chain of length $N$.

In the open case we may assume that $\sum _i \Alp _{G_i} (\varphi_i) < \infty$. Thus $\Alp _{G_i} (\varphi_i)$ is only non-zero for finitely many $i$, say for $i \in K$, and $\varphi _i \in \Aut_{G_i}(S_i)$ for $i \not \in K$. For $i \in K$, extend each $\varphi_i^j$  to $\varphi _i^j \times 1 \in \Aut_{\prod_i G_i}(Q_i^j \times \prod _{j \ne i} S_j)$. These form an Alperin chain of length $\sum _i \Alp _{G_i} (\varphi_i)$, for which the composition is equal to $\prod_{i \in K} \varphi_i\times \prod_{i \not \in K}1$. We can obtain $\prod_i \varphi_i$ as composition by adding the term $\prod _{i \in K} 1 \times \prod_{i \not \in K} \varphi_i \in \Aut_{\prod_i G_i}(\prod_i S_i)$, which does not count towards the length.
\end{proof}

\begin{Proposition}\label{essentialchain}
If we have a collection of profinite groups $G_i$ with Sylow pro-$p$ subgroups $S_i$, pro-$p$ subgroups  $P_i,P'_i \leq G_i$ and morphisms $\varphi_i \in \Iso _{G_i}(P_i,P_i')$ and form the cartesian product, then $\Alp^{\ess}_{\prod G_i}(\prod \varphi _i) = \sum  \Alp ^{\ess} _{G_i}(\varphi _i)$.
\end{Proposition}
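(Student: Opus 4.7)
The plan is to establish both inequalities $\Alp^{\ess}_{\prod G_i}(\prod \varphi_i) \leq \sum_i \Alp^{\ess}_{G_i}(\varphi_i)$ and $\Alp^{\ess}_{\prod G_i}(\prod \varphi_i) \geq \sum_i \Alp^{\ess}_{G_i}(\varphi_i)$.

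For the upper bound, I take an optimal essential chain for each $\varphi_i$ in $G_i$, say $\varphi_i = \varphi_i^{n_i}\cdots\varphi_i^1$ with $\varphi_i^k \in \Aut_{G_i}(Q_i^k)$ and $Q_i^k$ a proper essential subgroup of $S_i$. I lift each step to the product by setting $\widetilde{Q}_i^k := Q_i^k \times \prod_{j \ne i} S_j$ and letting $\widetilde{\varphi}_i^k$ act as $\varphi_i^k$ on the $i$-th coordinate and the identity on the rest. The verification that $\widetilde{Q}_i^k$ is $\prod G_i$-essential is straightforward: it is centric because each factor is centric in the corresponding $G_j$, and
\[
\Aut_{\prod G_i}(\widetilde{Q}_i^k)/\Inn(\widetilde{Q}_i^k) \cong \Out_{G_i}(Q_i^k) \times \prod_{j \ne i} \Out_{G_j}(S_j)
\]
has $p$-subgroup poset coinciding with that of $\Out_{G_i}(Q_i^k)$, since each $\Out_{G_j}(S_j) = N_{G_j}(S_j)/S_jC_{G_j}(S_j)$ has order prime to $p$ by standard Sylow theory, making the second factor pro-$p'$. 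Concatenating these lifts, processing the coordinates in some fixed order, yields an essential chain in $\prod G_i$ of total length $\sum_i n_i = \sum_i \Alp^{\ess}_{G_i}(\varphi_i)$ that realizes $\prod \varphi_i$; the $P$-chain conditions are satisfied because at step $(i,k)$ only the $i$-th coordinate is altered.

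For the lower bound, I consider an arbitrary essential chain $(\varphi_j, Q^j)_{j=1}^L$ in $\prod G_i$ for $\prod \varphi_i$. The central step is to show that the open essential subgroups $Q^j$ in a minimal chain have a single-coordinate form: $Q^j = R^j \times \prod_{i \ne k(j)} S_i$ for a unique index $k(j)$ and proper $G_{k(j)}$-essential subgroup $R^j$. Granted this, projection of the chain to coordinate $i$ furnishes an essential chain in $G_i$ for $\varphi_i$ of length $\#\{j : k(j) = i\} \geq \Alp^{\ess}_{G_i}(\varphi_i)$, and since the assignment $j \mapsto k(j)$ partitions $\{1,\ldots,L\}$, I conclude $L = \sum_i \#\{j : k(j) = i\} \geq \sum_i \Alp^{\ess}_{G_i}(\varphi_i)$.

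The main obstacle is the single-coordinate classification. An open subgroup of $\prod S_i$ contains $\prod_{i \notin F} S_i$ for some finite $F$, and the centricity condition forces the $F$-portion to be a product of $G_i$-centric factors $Q^j_i$; thus $\Out_{\Ff}(Q^j) = \prod_i \Out_{G_i}(Q^j_i)$ and the $p$-subgroup poset of a product of groups is connected whenever two factors have non-trivial $p$-subgroups (they are joined through their product subgroup), so at most one coordinate can contribute non-trivial $p$-parts to $\Out$. The delicate point is that the paper's permissive definition admits ``weird'' multi-coordinate essentials where several $Q^j_i \ne S_i$ have $\Out_{G_i}(Q^j_i)$ of order prime to $p$, making the $p$-subgroup poset vacuously disconnected. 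To dispose of these in a minimal chain, I plan to argue via a Frattini / Schur--Zassenhaus argument that any $p'$-outer-automorphism appearing on such a coordinate is a restriction of an automorphism of $S_i$ itself, so the corresponding component can be absorbed into a length-$0$ step without altering the overall composition or increasing the chain length. Iterating this reduction brings the chain to one using only single-coordinate essentials while preserving its length, thereby completing the argument.
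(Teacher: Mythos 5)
Your overall strategy is the same as the paper's: lift optimal chains coordinatewise for the upper bound, and for the lower bound show that every open essential subgroup of $\prod_i S_i$ has the single-coordinate form $E_{i(j)}\times\prod_{i\ne i(j)}S_i$ and then project. The upper bound is essentially correct (modulo the fact that a minimal essential chain for $\varphi_i$ may also contain steps on $S_i$ itself, which must be merged into a single automorphism of $\prod_i S_i$ at the end, as in the open case of Lemma~\ref{lengthproduct}). The gap lies in the single-coordinate classification, which is the heart of the lower bound and which the paper isolates as Lemma~\ref{below1} and Corollary~\ref{cor:essential}.

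Two points. First, you derive the decomposition $Q^j=\prod_i Q^j_i$ from centricity, but centricity does not force a subgroup of a product to be the product of its projections (diagonal subgroups can perfectly well be centric). What forces it is radicality, which essential subgroups enjoy: if $P\ne \pi_AP\times\pi_BP$, then $N_{\pi_AP\times\pi_BP}(P)$ yields a normal $p$-subgroup of $N_{A\times B}(P)$ strictly larger than $P$, contradicting radicality --- this is Lemma~\ref{below1}(1). Second, your disposal of the ``vacuously disconnected'' coordinates --- proper factors $Q^j_i<S_i$ with $\Out_{G_i}(Q^j_i)$ of order prime to $p$ --- is only announced, and the proposed Frattini/Schur--Zassenhaus absorption is not justified: there is no reason a $p'$-element of $\Out_{G_i}(Q^j_i)$ should be the restriction of an automorphism of $S_i$. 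More to the point, the case you are worried about cannot occur, because you have not used openness: a \emph{proper open} subgroup $Q^j_i$ of the pro-$p$ group $S_i$ satisfies $N_{S_i}(Q^j_i)>Q^j_i$, and centricity then produces a non-trivial $p$-subgroup of $\Out_{G_i}(Q^j_i)$, so $S_p(\Out_{G_i}(Q^j_i))$ is non-empty. Combined with the connectivity of $S_p(A\times B)$ when both factors have order divisible by $p$, this shows directly that at most one coordinate can be a proper subgroup of its Sylow (Lemma~\ref{below1}(2),(3) and Corollary~\ref{cor:essential}). Once this is repaired, the projection and counting argument you give for the lower bound is correct and agrees with the paper's.
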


\begin{proof}
The inequality $\sum_i  \Alp ^{\ess} _{G_i}(\varphi _i) \geq \Alp^{\ess}_{\prod G_i}(\prod \varphi _i)$ is proved just as in the open case of the preceding lemma.

For the reverse inequality, suppose that we have a chain for $\prod_i \varphi _i$ in terms of $\theta ^j \in \Aut_{\prod_i G_i}(Q^j)$, $1 \leq j \leq N$. By Corollary~\ref{cor:essential} below, $Q^j$ is of the form $E_{i(j)} \times \prod _{i \ne i(j)} S_i$ for some $i(j)$ and some open essential subgroup $E_{i(j)}$ of $S_{i(j)}$. Thus $\theta^j = \prod_i \theta^j_i$ with $\theta^j_i \in \Aut_{G_i}(S_i)$ if $i \ne i(j)$ and $\theta^j_i \in \Aut_{G_i}(E_i)$ if $i = i(j)$.

For fixed $i$, the $\theta_i^j$ form an Alperin chain for $\varphi_i$. By our convention that automorphisms of the Sylow pro-$p$ subgroup do not count towards the length, each $j$ only counts towards the length of the chain for one $i$. The inequality follows.
\end{proof}

We say that $p$ divides the order of a profinite group if a Sylow pro-$p$ subgroup is non-trivial.

\begin{Lemma}\label{below1}

Let $A$ and $B$ be profinite groups and let $P$ be a pro-$p$ subgroup of $A \times B$ that is either finite or is open in a Sylow pro-$p$ subgroup of $A \times B$.
\begin{enumerate}
\item
If $P$ is radical then $P=Q \times R$, where $Q$ is radical in $A$ and $R$ is radical in $B$.
\item
If $P$ is essential then $P=Q \times R$, where $Q$ is essential in $A$ and $R$ is essential in $B$. Furthermore, $p$ does not divide both $|N_A(Q)/Q|$ and $|N_B(R)/R|$.
\item
If $P$ is open in a Sylow pro-$p$ subgroup of $A \times B$ and is also essential, then $P=Q \times R$, where either $Q$ is essential in $A$ and $R$ is a Sylow pro-$p$ subgroup of $B$, or $R$ is essential in $B$ and $Q$ is a Sylow pro-$p$ subgroup of $A$.
\end{enumerate}
\end{Lemma}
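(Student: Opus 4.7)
The plan is to decompose $P$ via Goursat's lemma and exploit the fact that the hypothesis forces certain indices to be finite. Write $Q := \pi_A(P)$, $R := \pi_B(P)$, $N_Q := P \cap (A \times 1) \trianglelefteq Q$, $N_R := P \cap (1 \times B) \trianglelefteq R$, with $P = \{(q,r) \in Q \times R : \phi(qN_Q) = rN_R\}$ for some isomorphism $\phi : Q/N_Q \stackrel{\sim}{\to} R/N_R$. For both (1) and (2) the aim reduces to showing $N_Q = Q$ (equivalently $P = Q \times R$). A crucial preliminary: $|Q/N_Q| = [Q \times R : P]$ is finite in either scenario of the hypothesis (obvious when $P$ is finite; bounded by $[S:P] < \infty$ when $P$ is open in a Sylow pro-$p$ subgroup $S$ of $A \times B$), so $Q/N_Q$ is a \emph{finite} $p$-group and thus has non-trivial center if non-trivial.

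For (1), reading the radical hypothesis as $P = O_p(N_{A \times B}(P))$, I would construct a normal pro-$p$ subgroup of $N_{A \times B}(P)$ that must be absorbed into $P$. Let $K' \leq Q$ be the preimage of $Z(Q/N_Q)$. A short Goursat computation shows $(K' \times 1) \leq N_{A \times B}(P)$, because conjugation by $(k,1)$ with $k \in K'$ fixes every coset $qN_Q$, preserving the graph condition. Moreover, since $N_{A \times B}(P) \leq N_A(Q) \times N_B(R)$ normalizes both $Q$ and $N_Q$, it normalizes $K' \times 1$. Thus $K' \times 1 \trianglelefteq N_{A \times B}(P)$ is a pro-$p$ subgroup, and by radicality $K' \times 1 \leq P \cap (A \times 1) = N_Q \times 1$; combined with $N_Q \leq K'$ this gives $Z(Q/N_Q) = 1$, forcing $Q = N_Q$ and symmetrically $R = N_R$, so $P = Q \times R$. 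For radicality of $Q$ in $A$: now $N_{A \times B}(P) = N_A(Q) \times N_B(R)$, so any normal $p$-overgroup $\widetilde Q$ of $Q$ in $N_A(Q)$ yields $\widetilde Q \times R \trianglelefteq N_{A \times B}(P)$ pro-$p$, hence $\widetilde Q \times R \leq P$ and $\widetilde Q = Q$.

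For (2), set $G := A \times B$ and recall that essentiality of $P$ means $\Ff$-centricity together with $S_p(\Aut_\Ff(P)/\Inn(P))$ disconnected (or empty). The latter gives $O_p(\Aut_\Ff(P)) = \Inn(P)$, and centricity then implies the classical radicality $P = O_p(N_G(P))$ needed for (1): any normal $p$-overgroup $\widetilde P$ of $P$ in $N_G(P)$ maps into $O_p(\Aut_\Ff(P)) = \Inn(P)$ in $\Aut_\Ff(P) = N_G(P)/C_G(P)$, so $\widetilde P \leq P C_G(P)$, and centricity forces the Sylow $p$-part of $C_G(P)$ to equal $Z(P) \leq P$. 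Applying (1) gives $P = Q \times R$, whence $\Aut_\Ff(P) = \Aut_A(Q) \times \Aut_B(R)$, $\Inn(P) = \Inn(Q) \times \Inn(R)$, and so $\Aut_\Ff(P)/\Inn(P) = \Out_A(Q) \times \Out_B(R)$. In a direct product, $S_p$ is connected whenever both factors have non-trivial $p$-subgroups (via the chain $P_1 \times 1 \leq P_1 \times P_2 \geq 1 \times P_2$), so the disconnectedness hypothesis forces at least one of $\Out_A(Q), \Out_B(R)$ to be a $p'$-group while the other has disconnected or empty $S_p$. Both $Q$ and $R$ are therefore essential in their respective factors (the empty-$S_p$ case being permitted), and the statement that $p$ does not divide both $|N_A(Q)/Q|$ and $|N_B(R)/R|$ follows because, for centric subgroups, $|N_A(Q)/Q|$ and $|\Out_A(Q)|$ agree up to $p'$-factors.

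Part (3) then follows from (2) combined with openness: since $P = Q \times R$ is open in $S_A \times S_B$, both $Q$ and $R$ are open in their respective Sylows, and by (2) we may assume $p \nmid |N_A(Q)/Q|$. If $Q$ were proper in $S_A$, the pro-$p$ fact recalled in Section~2 (an open proper subgroup of a pro-$p$ group is strictly contained in its normalizer) would give $N_{S_A}(Q) \gneq Q$, forcing $p \mid |N_{S_A}(Q)/Q| \leq |N_A(Q)/Q|$, a contradiction. Hence $Q = S_A$ and $R$ is essential in $B$ by (2). The main obstacle in the whole argument is the construction of the auxiliary subgroup $K'$ in part (1); once it is identified, the nontrivial-center property of finite $p$-groups combines with Goursat's lemma and the radical hypothesis to yield the splitting mechanically.
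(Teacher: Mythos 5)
Your proof is correct and follows essentially the same route as the paper's: the splitting in (1) comes from exhibiting a normal pro-$p$ subgroup of $N_{A\times B}(P)$ strictly containing $P$ whenever $P$ is proper in $\pi_A(P)\times\pi_B(P)$, part (2) reduces to the connectedness of $S_p$ of a direct product applied to $\Out_A(Q)\times\Out_B(R)$ together with centricity to identify $\Out$ with $N/Q$, and part (3) uses the normalizer-growth property of open subgroups of pro-$p$ groups. The only real difference is in (1), where the paper simply takes $N_{\pi_A(P)\times\pi_B(P)}(P)$ itself as the offending normal pro-$p$ subgroup (it strictly contains $P$ because the inclusion $P\le\pi_A(P)\times\pi_B(P)$ is open), whereas you realize the same normalizer growth explicitly via Goursat's lemma and the nontrivial centre of the finite $p$-group $\pi_A(P)/(P\cap(A\times 1))$.
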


\begin{proof}
Denote the projections by $\pi_A$ and $\pi_B$. We have $P \leq \pi_A P \times \pi_B P $; the inclusion is open, by the hypotheses on $P$.

If $P=\pi_A P \times \pi_B P$ then it is easy to verify that $\pi_A P$ is radical in $A$ and $\pi_B P$ is radical in $B$.

If $P \ne \pi_A P \times \pi_B P$ then $P \ne N_{\pi_A P \times \pi_B P}(P) \leq O_p(N_{A \times B}(P))$, contradicting the assumption that $P$ is radical.

For the second part, we know from the first part that $P=Q \times R$. Since $Q$ is radical, it is also centric in $A$; thus $p$ divides $|N_A(Q)/Q|$ if and only if it divides $|N_A(Q)/C_A(Q)Q|$. The same applies to $R$. The result follows from the next lemma applied to $N_{A \times B}(Q \times R)/C_{A \times B}(Q \times R)(Q \times R) \cong N_A(Q)/C_A(Q)Q \times N_B(R)/C_B(R)R$.

For the third part, suppose that $p$ does not divide $|N_A(Q)/Q|$ and let $S$ be a Sylow pro-$p$ subgroup of $A$ that contains $Q$. Then $p$ does not divide $|N_S(Q)/Q|$, so $N_S(Q) \leq Q$ and thus $S \leq Q$, since $Q$ is open in $S$.
\end{proof}

\begin{Lemma}
The poset $S_p(A \times B)$ is connected whenever both $|A|$ and $|B|$ are divisible by $p$. If $|A|$, say, is not divisible by $p$, then $S_p(A \times B)$ is connected if and only if $S_p(B)$ is connected.
\end{Lemma}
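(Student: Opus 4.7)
My plan is to handle the two cases separately, using an explicit ``hub'' chain for the first part and an isomorphism of posets for the second.

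For the first part, assume both $|A|$ and $|B|$ are divisible by $p$, and fix non-trivial $p$-subgroups $P_0\le A$ and $Q_0\le B$, so that $P_0\times Q_0$ is a non-trivial $p$-subgroup of $A\times B$. I will show every $R\in S_p(A\times B)$ lies in the same connected component as $P_0\times Q_0$. The key observation is that any $p$-subgroup $R\le A\times B$ satisfies $R\le \pi_A(R)\times\pi_B(R)$, with $\pi_A(R)$ and $\pi_B(R)$ pro-$p$ since they are images of the pro-$p$ group $R$. Since $R\ne 1$, at least one of the projections is non-trivial; by the symmetry between $A$ and $B$, I may assume $\pi_A(R)\ne 1$. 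Then the chain
\[
R \;\le\; \pi_A(R)\times\pi_B(R) \;\ge\; \pi_A(R)\times 1 \;\le\; \pi_A(R)\times Q_0 \;\ge\; 1\times Q_0 \;\le\; P_0\times Q_0
\]
consists of non-trivial $p$-subgroups of $A\times B$ (using $\pi_A(R)\ne 1$ and $Q_0\ne 1$), with consecutive terms comparable. This witnesses connectedness.

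For the second part, suppose $p$ does not divide $|A|$, so the Sylow pro-$p$ subgroup of $A$ is trivial. By Sylow theory for profinite groups, every pro-$p$ subgroup of $A$ is trivial. Any pro-$p$ subgroup $R\le A\times B$ projects via $\pi_A$ to a pro-$p$ subgroup of $A$, which must be trivial; hence $R\le 1\times B$. The map $P\mapsto 1\times P$ is therefore an order-preserving bijection $S_p(B)\to S_p(A\times B)$, so the two posets are isomorphic and the connectedness of one is equivalent to that of the other.

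I do not anticipate a real obstacle: the only subtlety in Part 1 is ensuring that every term in the displayed chain is non-trivial, which is exactly what forces the WLOG reduction to $\pi_A(R)\ne 1$, and in Part 2 the work is packaged in the Sylow-theoretic statement that a trivial Sylow pro-$p$ subgroup forces every pro-$p$ subgroup to be trivial.
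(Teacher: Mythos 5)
Your proof is correct and follows essentially the same strategy as the paper: connecting every non-trivial $p$-subgroup to a fixed hub via a chain built from the projections $\pi_A(R)\times\pi_B(R)$, and identifying $S_p(A\times B)$ with $S_p(B)$ when $p\nmid |A|$. The only cosmetic differences are your choice of hub ($P_0\times Q_0$ rather than the paper's $X\times 1$) and your use of a symmetry reduction in place of the paper's explicit two-case analysis.
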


\begin{proof}
If  both $|A|$ and $|B|$ are divisible by $p$, choose non-trivial pro-$p$ subgroups $X \leq A$ and $Y \leq B$. We will show that any non-trivial pro-$p$ subgroup $P \leq A \times B$ is connected to $X \times 1$.

If $\pi_B P \ne 1$ then we can use the chain $P \leq \pi_A P \times \pi_B P \geq 1 \times \pi _B P \leq X \times \pi _B P \geq X \times 1$.

If $\pi _A P =1$ then $P = \pi _A P \times 1 \leq \pi _A P \times Y \geq 1 \times Y \leq X \times Y \geq X \times 1$.

If $|A|$ is not divisible by $p$ then $S_p(A \times B) \equiv S_p(B)$, and the second part follows.
\end{proof}

\begin{Corollary}
\label{cor:essential}
An open essential subgroup of $\prod_i S_i \leq \prod _i G_i$ is of the form $E_j \times \prod _{i \ne j}S_i$ for some $j$ and an open essential subgroup $E_j$ of $S_j$.
\end{Corollary}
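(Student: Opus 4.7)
The plan is to reduce the statement to Lemma~\ref{below1}(3) by induction on the size of a finite index set. Since $P$ is open in $\prod_i S_i$, there exists a finite $J \subseteq I$ such that $P$ contains $\prod_{i \notin J} S_i$; equivalently, $P = P_J \times \prod_{i \notin J} S_i$ where $P_J \leq \prod_{i \in J} S_i$ is an open subgroup. I would proceed by induction on $|J|$.

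For the base case $|J|=0$ we have $P = \prod_i S_i$ itself, which can trivially be written in the required form as $S_j \times \prod_{i \neq j} S_i$ for any fixed $j$ (with $E_j = S_j$ the full Sylow). For the inductive step, pick any index $i_0 \in J$ and apply Lemma~\ref{below1}(3) to the decomposition $\prod_i G_i = G_{i_0} \times \prod_{i \neq i_0} G_i$, whose Sylow pro-$p$ subgroup is precisely $S_{i_0} \times \prod_{i \neq i_0} S_i$. This yields two alternatives: either $P = Q \times \prod_{i \neq i_0} S_i$ with $Q$ essential in $G_{i_0}$, in which case the conclusion holds with $E_{i_0} = Q$ and $j = i_0$ (openness of $Q$ in $S_{i_0}$ follows immediately from openness of $P$); or $P = S_{i_0} \times R$ with $R$ essential in $\prod_{i \neq i_0} G_i$.

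In the second alternative, $R$ is an open essential subgroup of $\prod_{i \neq i_0} S_i$ and still contains $\prod_{i \notin J} S_i$, so the corresponding finite index set for $R$ inside $I \setminus \{i_0\}$ is $J \setminus \{i_0\}$, of size $|J|-1$. Applying the inductive hypothesis in the smaller product $\prod_{i \neq i_0} G_i$ yields $R = E_j \times \prod_{i \neq i_0, i \neq j} S_i$ for some $j \neq i_0$ with $E_j$ open essential in $G_j$; then $P = S_{i_0} \times E_j \times \prod_{i \neq i_0, i \neq j} S_i = E_j \times \prod_{i \neq j} S_i$, as required.

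The only real obstacle is making sure the induction closes, that is, that in the second case the residual subgroup $R$ genuinely corresponds to an index set strictly smaller than $J$. The crucial point is to select $i_0$ \emph{inside} $J$ rather than outside it; choosing $i_0 \notin J$ would land us in the second alternative trivially (with $R = P_J \times \prod_{i \notin J \cup \{i_0\}} S_i$) and give no reduction. Once this choice is made, each induction step strips off one ``nontrivial'' coordinate, and the argument reduces to iterated application of the two-factor decomposition provided by Lemma~\ref{below1}(3).
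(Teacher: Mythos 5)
Your proof is correct. It turns on the same key fact as the paper's argument, namely Lemma~\ref{below1} applied to a splitting $G_{i_0}\times\prod_{i\ne i_0}G_i$, but you organize it differently: you use openness at the outset to produce a finite set $J$ with $P\supseteq\prod_{i\notin J}S_i$ and then run an induction on $|J|$, invoking only part~(3) of the lemma at each step to peel off one coordinate (and, as you rightly note, the coordinate must be chosen inside $J$ for the induction to make progress). The paper instead argues globally: it first applies the lemma to every coordinate at once to conclude $P=\prod_i P_i$ with each $P_i$ essential, and then applies the two-factor case to $(G_j\times G_k)\times\prod_{i\ne j,k}G_i$ to show that at most one $P_k$ can be a proper subgroup of $S_k$; no induction is needed. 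Your version makes the role of openness more transparent (it is exactly what guarantees the finite support $J$, hence termination), at the small cost of having to state the corollary for an arbitrary index set so that the inductive hypothesis applies to the subproduct $\prod_{i\ne i_0}G_i$, and of the degenerate base case $P=\prod_i S_i$, which is covered because the paper's definition of essential includes $T\le Q$. Both arguments are sound and of comparable length.
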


\begin{proof}
Let $P$ be the open essential subgroup. For each $j$, apply Lemma~\ref{below1} to $G_j \times \prod_{i \ne j}G_i$ to see that the projection of $P$ to its $j$-coordinate is both a factor of $P$ and essential. Thus $P= \prod _i P_i$, with the $P_i$ essential. If $P_k$, say, is not a Sylow pro-$p$ subgroup of $S_k$ and $j \ne k$, then apply Lemma~\ref{below1} to $(G_j \times G_k) \times \prod_{i \ne j,k} G_i$ to see that $P_j \times P_k$ is essential and hence, by the lemma again, $P_j=S_j$.
\end{proof}

\begin{Theorem}
Let $G_i$ be an infinite collection of finite groups, each with $p$-subgroups $P_i,P'_i \leq G_i$ and $\varphi_i \in \Iso_{G_i}(P_i,P'_i)$, and suppose that $\Alp_{G_i}(\varphi_i) \geq 1$. Then $\Alp_{\prod G_i}(\prod \varphi_i) = \Alp^{\ess}_{\prod G_i}(\prod \varphi_i) = \infty$.
\end{Theorem}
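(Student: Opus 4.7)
The plan is to derive both equalities essentially as formal consequences of Proposition~\ref{essentialchain} and Lemma~\ref{infchain}, since the hard combinatorial content has already been set up.

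First I would handle the essential version. By Lemma~\ref{infchain} we have $\Alp^{\ess}_{G_i}(\varphi_i) \geq \Alp_{G_i}(\varphi_i) \geq 1$ for every index $i$. Since the index set is infinite, the sum $\sum_i \Alp^{\ess}_{G_i}(\varphi_i)$ is infinite. But Proposition~\ref{essentialchain} identifies this sum with $\Alp^{\ess}_{\prod G_i}(\prod\varphi_i)$, so the latter is infinite.

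Next I would deduce the unrestricted version by contradiction. If $\Alp_{\prod G_i}(\prod\varphi_i)$ were finite, then Lemma~\ref{infchain} (the implication $\Alp_\Ff(\varphi)<\infty \Rightarrow \Alp^{\ess}_\Ff(\varphi)<\infty$, which is the content of Lemma~\ref{la:finess}) would force $\Alp^{\ess}_{\prod G_i}(\prod\varphi_i)$ to be finite too, contradicting the previous paragraph. Hence both Alperin lengths are infinite.

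There is really no main obstacle left at this stage: the theorem is a bookkeeping consequence of the structural results already proved, the key one being Proposition~\ref{essentialchain}, whose proof in turn relied on Corollary~\ref{cor:essential} describing open essential subgroups of an infinite cartesian product as factorwise products with a single non-Sylow essential component. The only subtle point worth flagging in the write-up is that the hypothesis $\Alp_{G_i}(\varphi_i)\geq 1$ (rather than $\Alp^{\ess}_{G_i}(\varphi_i)\geq 1$) is already enough, because Lemma~\ref{infchain} bridges the two notions.
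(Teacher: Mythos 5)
Your proposal is correct and follows exactly the paper's own argument: bound each $\Alp^{\ess}_{G_i}(\varphi_i)$ below by $1$ via Lemma~\ref{infchain}, apply Proposition~\ref{essentialchain} to get an infinite sum for the essential length of the product, and then use the implication $\Alp_\Ff(\varphi)<\infty \Rightarrow \Alp^{\ess}_\Ff(\varphi)<\infty$ from Lemma~\ref{infchain} (i.e.\ Lemma~\ref{la:finess}) to conclude the unrestricted length is also infinite. No gaps; this matches the paper's proof step for step.
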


\begin{proof} We must have $\Alp^{\ess}_{G_i}(\varphi_i) \geq 1$, so $\Alp^{\ess}_{\prod G_i}(\prod \varphi_i) \geq \sum 1 = \infty$, by Proposition~\ref{essentialchain}. The same must be true for $\Alp_{\prod G_i}(\prod \varphi_i)$, by Proposition~\ref{infchain}.
\end{proof}

This provides another proof of the fact that it is necessary to allow infinite chains in Theorem~\ref{ProAFTgene}.
A direct approach to showing that infinite chains are necessary, even for $\Alp^{\clo}$, would be to find a sequence of finite groups $G_i$ and $P,P' \leq S_i$, $\varphi_i \in \Iso(P,P')$ such that $\Alp_{G_i}(\varphi_i) \rightarrow \infty$ as $i \rightarrow \infty$, and then apply Lemma~\ref{lengthproduct} to their product. It seems to be generally believed that such a sequence exists; however, it seems difficult to find a lower bound for $\Alp_G(\varphi)$ in examples.

Notice that if such a sequence does not exist at the prime $p$ then there exists an integer $n$ such that, at the prime $p$, $\Alp_G(\varphi) \leq n$ for all finite groups $G$ and all morphisms $\varphi$. In this case we will say that $\Alp$ is uniformly bounded at $p$.

Sometimes it is convenient to consider all the morphisms in a profusion system $\Ff$ as a set, $\Mor(\Ff)$ (morphisms with different domains or codomains are considered to be different).

\begin{Lemma}
If $\Ff \cong \ilim _i \Ff_i$ then $\Mor(\Ff) \cong \ilim _i \Mor(\Ff_i)$.
\end{Lemma}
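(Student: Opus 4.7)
The plan is to define a natural map $\Phi:\Mor(\Ff)\to\ilim_i\Mor(\Ff_i)$ by $\Phi(\varphi)=(F_i(\varphi))_{i\in I}$, and then verify it is a bijection using the description $\Hom_\Ff(P,Q)=\ilim_i\Hom_{\Ff_i}(f_i(P),f_i(Q))$ from Definition~\ref{ConstrProFusSt} together with the fact that every closed subgroup of $S=\ilim_i S_i$ is recovered from its projections.

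First I would check that $\Phi$ is well-defined: since $F_{i,j}\circ F_j=F_i$, the family $(F_i(\varphi))_i$ is a compatible element of the inverse limit, and in particular the domains $f_i(P)$ and codomains $f_i(Q)$ form compatible systems of subgroups of the $S_i$. For injectivity, suppose $\Phi(\varphi)=\Phi(\psi)$ with $\varphi\in\Hom_\Ff(P,Q)$ and $\psi\in\Hom_\Ff(P',Q')$. Equality in the disjoint union $\Mor(\Ff_i)$ forces $f_i(P)=f_i(P')$ and $f_i(Q)=f_i(Q')$ for all $i$; since $P$ and $P'$ are closed subgroups of the profinite group $S$, this yields $P=P'$ and $Q=Q'$. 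Then $\varphi=\psi$ follows directly from the definition of $\Hom_\Ff(P,Q)$ as an inverse limit.

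For surjectivity, take $(\varphi_i)_i\in\ilim_i\Mor(\Ff_i)$, where $\varphi_i\in\Hom_{\Ff_i}(P_i,Q_i)$. Compatibility gives $f_{i,j}(P_j)=P_i$ and $f_{i,j}(Q_j)=Q_i$ for $j\geq i$, so the $\{P_i\}$ form a compatible inverse system of finite subgroups, as do the $\{Q_i\}$. Set
\[
P:=\{s\in S\mid f_i(s)\in P_i\text{ for all }i\in I\},\qquad Q:=\{s\in S\mid f_i(s)\in Q_i\text{ for all }i\in I\}.
\]
These are closed subgroups of $S$, and the standard Mittag-Leffler argument for inverse limits of non-empty finite sets gives $f_i(P)=P_i$ and $f_i(Q)=Q_i$ for every $i$. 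Hence $(\varphi_i)_i$ lies in $\ilim_i\Hom_{\Ff_i}(f_i(P),f_i(Q))=\Hom_\Ff(P,Q)\subseteq\Mor(\Ff)$, and it is by construction a preimage under $\Phi$.

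The main obstacle is the surjectivity step, specifically the identification $f_i(P)=P_i$: one must know that the projection from an inverse limit of a compatible system of non-empty finite sets onto any factor is surjective. This is the only non-formal ingredient, and it is standard for inverse limits over directed index sets.
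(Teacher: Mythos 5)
Your proposal is correct and follows essentially the same route as the paper: the paper also defines the natural (injective) map and obtains surjectivity by assembling a compatible family $\varphi_i:P_i\to Q_i$ into a morphism $\ilim_i P_i\to\ilim_i Q_i$. You have merely filled in the details the paper leaves implicit, notably the verification that $f_i(P)=P_i$ via the standard nonempty-finite-inverse-limit argument.
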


\begin{proof}
There is clearly a natural injective map $\Mor(\Ff) \to \ilim _i \Mor(\Ff_i)$. An element of the codomain is a compatible collection of morphisms $\varphi : P_i \to Q_i$; these combine to define a morphism $\varphi : \ilim _i P_i \to \ilim _i Q_i$.
\end{proof}

We will now regard $\Mor(\Ff)$ as a profinite set in this way.

The ordered set of the morphisms in an Alperin chain for $\varphi$ of length at most $n$ can be considered as an element of $\Mor(\Ff)^n$ (shorter chains can be extended by the identity map). This element completely determines the Alperin chain.

\begin{Proposition}
\label{limitlength}
If $\Ff=\ilim \Ff_i$, then
$\Alp^{\clo}_\Ff(\varphi) \leq \sup \{ \Alp _{\Ff_i}(F_i(\varphi)) \}$.
\end{Proposition}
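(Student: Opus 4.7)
The plan is to reduce to the case $n := \sup_i \Alp_{\Ff_i}(F_i(\varphi)) < \infty$, since otherwise the inequality is vacuous, and then to assemble Alperin chains in the various $\Ff_i$ into a single chain for $\varphi$ in $\Ff$ via the standard inverse-limit-of-non-empty-finite-sets argument, interpreted through the lemma immediately preceding the proposition, $\Mor(\Ff) \cong \ilim_i \Mor(\Ff_i)$.

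Concretely, for each $i$ I would let $X_i \subseteq \Mor(\Ff_i)^n$ be the set of $n$-tuples $(\varphi_{i,1},\dots,\varphi_{i,n})$ that form an Alperin chain for $F_i(\varphi)$ of length at most $n$, padded at the end with copies of $\id_{S_i}$ when necessary (which, as observed in the discussion preceding the proposition, does not affect the length). By hypothesis each $X_i$ is non-empty, and it is finite because $\Ff_i$ is. The functor $F_{i,j}$ applied componentwise then sends $X_j$ to $X_i$: the relation $F_{i,j}\circ F_j = F_i$ guarantees that a factorization of $F_j(\varphi)$ projects to a factorization of $F_i(\varphi)$; the incidence conditions $P_{j,k-1}, P_{j,k} \leq Q_{j,k}$ and $\varphi_{j,k}(P_{j,k-1})=P_{j,k}$ pass forward under $f_{i,j}$; and $F_{i,j}(\id_{S_j})=\id_{S_i}$, so the padding is preserved. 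Thus $\{X_i\}$ is an inverse system of non-empty finite sets, and $\ilim_i X_i$ is non-empty.

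Choosing an element of the inverse limit produces, via the identification $\ilim_i \Mor(\Ff_i)^n \cong \Mor(\Ff)^n$ supplied by the preceding lemma, an $n$-tuple $(\varphi_1,\dots,\varphi_n)$ of morphisms in $\Ff$ with each $\varphi_k \in \Aut_\Ff(Q_k)$ for the closed subgroup $Q_k := \ilim_i Q_{i,k}$. Setting $P_k := \ilim_i P_{i,k}$, all the Alperin chain conditions---the containments $P_{k-1},P_k \leq Q_k$, the image relations $\varphi_k(P_{k-1})=P_k$, and the factorization $\varphi = \varphi_n\cdots\varphi_1|_P$---pass to the limit because they hold at each level $i$ and composition in $\Ff$ matches componentwise composition of compatible families. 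This is an Alperin chain for $\varphi$ in $\Ff$ using closed subgroups and consisting of at most $n$ morphisms (so at most $n$ subgroups $Q_k$ distinct from $S$), which gives $\Alp^\clo_\Ff(\varphi) \leq n$.

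The main obstacle is purely bookkeeping: verifying that every condition in the definition of an Alperin chain transfers through the inverse limit. This rests entirely on the fact that images and intersections behave well under inverse limits, together with the preceding lemma identifying morphisms in $\Ff$ with compatible systems in the $\Ff_i$. The reason the conclusion involves $\Alp^\clo$ rather than $\Alp$ is precisely that an inverse limit of subgroups that are open in the finite quotients $S_i$ is in general merely closed in $S$, so openness of the resulting $Q_k$ cannot be forced.
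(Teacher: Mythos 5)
Your proposal is correct and follows essentially the same route as the paper: reduce to the case where the supremum $n$ is finite, view the set of (padded) Alperin chains of length $n$ for $F_i(\varphi)$ as a non-empty finite subset of $\Mor(\Ff_i)^n$, and extract an element of the inverse limit to obtain a chain in closed subgroups for $\varphi$. The paper's proof is just a terser version of yours, leaving the transfer of the chain conditions through the limit implicit.
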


\begin{proof}
If $\sup \{ \Alp _{\Ff_i}(F_i(\varphi_i))\}= \infty$ there is nothing to prove, so assume that $\sup \{ \Alp _{Ff_i}(F_i(\varphi _i))\} =n$.

Let $C_i$ be the set of all Alperin chains in $\Ff_i$ for $F_i(\varphi)$ of length $n$, considered as a subset of $\Mor(\Ff_i)^n$. The sets $C_i$ are non-empty, by hypothesis, and if $j \geq i$ there is a natural map $C_j \to C_i$. Thus we can form $\ilim_i C_i$ and it is non-empty; an element of it yields an Alperin chain in closed subgroups in $\Ff$ of length $n$ for $\varphi$.
\end{proof}

\begin{Corollary}
It is always possible to find a finite Alperin chain in closed subgroups for any profinite group and any morphism at the prime $p$, if and only if $\Alp$ is uniformly bounded at $p$. If there is a uniform bound then it also applies to profinite groups.
\end{Corollary}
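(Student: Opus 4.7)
The plan is to derive the corollary directly from the two preceding tools: Proposition~\ref{limitlength} for one direction and Lemma~\ref{lengthproduct} for the other. No new machinery is needed.

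For the ``if'' direction, suppose that $\Alp$ is uniformly bounded at $p$ by an integer $n$ on finite groups. Let $G$ be any profinite group with Sylow pro-$p$ subgroup $S$, and write $G = \ilim_i G_i$ as an inverse limit of finite quotients, so that $\Ff_S(G) \cong \ilim_i \Ff_{S_i}(G_i)$. For every morphism $\varphi$ in $\Ff_S(G)$ and every~$i$, the image $F_i(\varphi)$ is a morphism in the fusion system of the finite group $G_i$, and classical Alperin's theorem gives $\Alp_{G_i}(F_i(\varphi)) \leq n$ by hypothesis. Proposition~\ref{limitlength} then yields $\Alp^{\clo}_G(\varphi) \leq n$. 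This simultaneously proves the ``if'' direction and the ``moreover'' statement that the same uniform bound carries over to profinite groups.

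For the ``only if'' direction, argue by contrapositive. Suppose that $\Alp$ is not uniformly bounded at $p$ on finite groups. Then for each $n \in \mathbb{N}$ we can choose a finite group $G_n$, subgroups $P_n, P_n' \leq S_n$ and a morphism $\varphi_n \in \Iso_{G_n}(P_n,P_n')$ with $\Alp_{G_n}(\varphi_n) \geq n$. Form the cartesian product $H := \prod_n G_n$, which is a profinite group with Sylow pro-$p$ subgroup $\prod_n S_n$, and consider the morphism $\prod_n \varphi_n \in \Iso_H(\prod_n P_n, \prod_n P_n')$. By the closed-version identity in Lemma~\ref{lengthproduct},
\[
\Alp^{\clo}_H\bigl(\textstyle\prod_n \varphi_n\bigr) \;=\; \sup_n \Alp_{G_n}(\varphi_n) \;=\; \infty,
\]
so $H$ and $\prod_n \varphi_n$ form a profinite example in which no finite Alperin chain in closed subgroups exists.

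The only real point to check is that nothing subtle happens when we pass from the uniform bound on finite groups to profinite groups: this is exactly what Proposition~\ref{limitlength} is designed to handle, and the rest is bookkeeping with the inverse-limit description of $\Ff_S(G)$.
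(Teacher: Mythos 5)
Your proof is correct and follows exactly the route the paper intends: the paper's own proof is the one-line citation ``This is a consequence of Proposition~\ref{limitlength} and Lemma~\ref{lengthproduct}'', and you have simply made explicit how Proposition~\ref{limitlength} gives the ``if'' direction (together with the transfer of the bound $n$ to profinite groups) and how the closed-case equality in Lemma~\ref{lengthproduct} applied to $\prod_n G_n$ gives the contrapositive of the ``only if'' direction. The only quibble is the phrase ``classical Alperin's theorem gives $\Alp_{G_i}(F_i(\varphi))\leq n$'' --- the bound $n$ comes from the uniform-boundedness hypothesis, not from Alperin's theorem, which only gives finiteness.
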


\begin{proof}
This is a consequence of Proposition~\ref{limitlength} and Lemma~\ref{lengthproduct}.
\end{proof}


\begin{thebibliography}{1}%%%
%%%%%%%%%%%%%%%%%%%%%%%%%%%%%

\bibitem{aschbacher:NormalSubsystems}
M.~Aschbacher,
\emph{Normal subsystems of fusion systems},
Proc. London Math. Soc. \textbf{97} (2008), 239--271.

%\bibitem{aschbacher:S3Free}
%M. Aschbacher, \emph{$S_3$-free $2$-fusion systems},
%preprint (2008).

\bibitem{benson}
D.J.~Benson,
Representations and Cohomology II: Cohomology of Groups and Modules, \emph{Cambridge Studies in Advanced Mathematics} 31, 1991.

\bibitem{BCGLO2}
C.~Broto, N.~Castellana, J.~Grodal, R.~Levi, B.~Oliver,
\emph{Extensions of $p$-local finite groups},
Trans. Amer. Math. Soc. {\bf 359} (2007) 3791--3858.

\bibitem{BLO1}
C.~Broto, R.~Levi, B.~Oliver,
\emph{Homotopy equivalences of p-completed classifying spaces of finite groups},
Invent. Math. \textbf{152} (2003) 611--664.

\bibitem{BLO2}
C.~Broto, R.~Levi, B.~Oliver,
\emph{The homotopy theory of fusion systems},
J. Amer. Math. Soc. \textbf{16} (2003) 779--856.

\bibitem{BLO4}
C.~Broto, R.~Levi, B.~Oliver,
\emph{A geometric construction of saturated fusion systems},
Contemp. Math. \textbf{399} (2006), 11--40.

\bibitem{craven} D.A.~Craven,
\emph{Control of fusion and solubility in fusion systems},  J. Algebra  \textbf{323}  (2010) 2429--2448.

\bibitem{cravenbook} D.A.~Craven,
The Theory of Fusion Systems, \emph{Cambridge Studies in Advanced Mathematics} 131, 2011.

\bibitem{gilottiribesserena}
A.L.~Gilotti, L.~Ribes, L.~Serena, \emph{Fusion in profinite groups}, Ann. Mat. Pura Appl. (4) \textbf{177} (1999), 349-362.

\bibitem{puig}
L.~Puig  \emph{Frobenius Categories\/},  J Algebra {\bf 303} (2006) 309-357.

\bibitem{puig:book}
L.~Puig, \emph{Frobenius categories versus Brauer blocks}, Progress in Mathematics \textbf{274}, Birkh\"auser Verlag, 2009.

\bibitem{Linckelmann}
M.~Linckelmann, \emph{Simple fusion systems and the Solomon 2-local groups}, J. Algebra \textbf{296}  (2006)  385--401.

\bibitem{rz}
L.~Ribes, P.~Zalesskii,
Profinite groups,
\emph{Ergebnisse der Mathematik und ihrer Grenzgebiete} 40, Springer-Verlag,  2000.

\bibitem{RobertsShpectorov}
K.~Roberts, S.~Shpectorov,
\emph{ On the definition of saturated fusion systems\/},
J. Group Theory  \textbf{12}  (2009) 679--687.

\bibitem{serre}
J.-P.~Serre,
Cohomologie Galoisienne,
\emph{ Lecture Notes in Mathematics} 5, Springer-Verlag, 1964.

\bibitem{stancu}
R.~Stancu, \emph{Control of fusion in fusion systems}, J. Algebra Appl. \textbf{5} (2006) 817--837.

\bibitem{wilson}
J.S.~Wilson,
Profinite groups,
\emph{London Mathematical Society Monographs}, new series 19, Oxford University Press, New York, 1998.

\bibitem{zel}
E.I.~Zel'manov,
\emph{On periodic compact groups},
Israel J. Math., \textbf{77} (1992) 83--95.

\end{thebibliography}
\end{document}